\def\newaliasedtheorem#1[#2]#3{
  \newaliascnt{#1@alt}{#2}
  \newtheorem{#1}[#1@alt]{#3}
  \expandafter\newcommand\csname #1@altname\endcsname{#3}
}
\newsavebox{\measure@tikzpicture}
\DeclareMathOperator{\id}{id}
\newcommand{\R}{\mathbb{R}}
\newcommand{\eps}{\varepsilon}
\newcommand{\N}{\mathbb{N}}
\newcommand{\weak}{\overset{*}{\rightharpoonup}}
\newcommand{\A}{\mathcal{A}}
\newcommand{\G}{\mathbb{G}(N,m)}
\newcommand{\BBB}{B_{\Psi^*}(S^\perp)}
\DeclareMathOperator{\op}{op}
\DeclareMathOperator{\Ker}{Ker}
\DeclareMathOperator{\dist}{dist}
\DeclareMathOperator{\spn}{span}
\DeclareMathOperator{\rank}{rank}
\DeclareMathOperator{\dv}{div}
\DeclareMathOperator{\tr}{tr}
\DeclareMathOperator{\Tan}{Tan}
\DeclareMathOperator{\spt}{spt}
\DeclareMathOperator{\loc}{loc}
\DeclareMathOperator{\sign}{sign}
\DeclareMathOperator{\Lip}{Lip}
\DeclareMathOperator{\E}{\mathds{E}}
\theoremstyle{plain}
\newtheorem{introtheorem}{Theorem}
\newtheorem{Teo}{Theorem}[section]
\newtheorem{lemma}[Teo]{Lemma}
\newtheorem{prop}[Teo]{Proposition}
\newtheorem{Cor}[Teo]{Corollary}
\newtheorem{Def}[Teo]{Definition}
\newtheorem{remark}[Teo]{Remark}
\numberwithin{equation}{section}
\title{Regularity for graphs with bounded anisotropic mean curvature}
\author[A. De Rosa and  R. Tione]{Antonio De Rosa\and Riccardo Tione}
\address{Antonio De Rosa
\hfill\break Department of Mathematics, University of Maryland, 4176 Campus Dr, College Park, Maryland 20742, United States}
\email{anderosa@umd.edu}
\address{Riccardo Tione  
\hfill\break  EPFL B, Station 8, CH-1015 Lausanne, CH}
\email{riccardo.tione@epfl.ch}
\subjclass[2010]{49Q05, 49Q20, 53A10, 35D30}
\begin{document}
\maketitle
\begin{abstract}
We prove that $m$-dimensional Lipschitz graphs with anisotropic mean curvature bounded in $L^p$, $p>m$, are regular almost everywhere in every dimension and codimension. This provides partial or full answers to multiple open questions arising in the literature. The anisotropic energy is required to satisfy a novel ellipticity condition, which holds for instance in a $C^2$ neighborhood of the area functional. This condition is proved to imply the atomic condition. In particular we provide the first non-trivial class of examples of anisotropic energies in high codimension satisfying the atomic condition, addressing an open question in the field.
As a byproduct, we deduce the rectifiability of varifolds (resp. of the mass of varifolds) with locally bounded anisotropic first variation for a $C^2$ (resp. $C^1$) neighborhood of the area functional.
In addition to these examples, we also provide a class of anisotropic energies in high codimension, far from the area functional, for which the rectifiability of the mass of varifolds with locally bounded anisotropic first variation holds.
To conclude, we show that the atomic condition excludes non-trivial Young measures in the case of anisotropic stationary graphs.
\end{abstract}

\section{Introduction}
A celebrated theorem of W. Allard \cite{ALL} states that, given a rectifiable $m$-varifold $V$ in $\R^N$ with density greater or equal than $1$ and generalized mean curvature bounded in $L^p(\|V\|)$ with $p>m$, then $V$ is regular around $x\in \R^N$ provided $x$ has density ratio sufficiently close to $1$. The proof deeply relies on the monotonicity formula of the density ratio, which is strictly related to the special symmetries of the area functional, \cite{ALLMON}.
For this reason, it is an extremely hard and widely open question whether this result holds for anisotropic energies, \cite[Question 1]{DLDPKT}, i.e. assuming an $L^p$ bound on the anisotropic mean curvature, see \eqref{boundedmean} for the definition, with respect to functionals of the form
\begin{equation*}
\Sigma_\Psi(V):= \int_{\Gamma}\Psi(T_x\Gamma)\theta(z)d\mathcal{H}^m(z),\qquad \mbox{where  $V = (\Gamma,\theta)$ is a rectifiable $m$-varifold}.
\end{equation*}
To the best of our knowledge, the only available result is the regularity for codimension one varifolds with bounded generalized anisotropic mean curvature \cite{ARCATA} (further referred to as $\Psi$-mean curvature), under a \emph{density lower bound assumption} \cite[The basic regularity Lemma, Assumption (1)]{ARCATA}.

The aim of this paper is to provide an affirmative answer to the question above in any dimension and codimension in the case the varifold $V$ is associated to a Lipschitz graph, solving the open question \cite[Question 5]{DLDPKT} for graphs. Namely, we will prove the following main result, see Theorem \ref{regteo} (we refer the reader to Section \ref{s:notation} and Section \ref{s:condition} for notation):

\begin{introtheorem}\label{A}
Let $\Psi\in C^2(\G,(0,\infty))$ be a functional satisfying (USAC), let $p>m$ and consider an open, bounded set $\Omega\subset \R^m$. Let $u \in \Lip(\Omega,\R^n)$ be a map whose graph $\Gamma_u$ induces a varifold with $\Psi$-mean curvature $H$ in $L^p$ in $\Omega \times \R^n$. Then there exists $\alpha>0$ and an open set $\Omega_0$ of full measure in $\Omega$ such that
\[
u \in C^{1,\alpha}(\Omega_0,\R^n).
\]
\end{introtheorem}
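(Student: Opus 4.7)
The plan is to combine a blow-up/compactness argument with an $\varepsilon$-regularity theorem, with the (USAC) condition doing the essential work of forcing strong gradient convergence at blow-up. I will work at a fixed Lebesgue point $x_0 \in \Omega$ where $\nabla u$ exists, $x_0$ is a Lebesgue point of $\nabla u$, and the $L^p$-average of $|H|$ tends to zero at scale $r$ (by Lebesgue differentiation, almost every point satisfies this). The rescalings $u_r(y):=r^{-1}(u(x_0+ry)-u(x_0)-r\nabla u(x_0)y)$ are equi-Lipschitz by the graph assumption, and their gradients $\nabla u_r(y)=\nabla u(x_0+ry)-\nabla u(x_0)$ converge to $0$ in $L^p_{\mathrm{loc}}$ (up to subsequences). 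The scaling is chosen so that the Euler--Lagrange equation for $u$, written weakly against test vector fields tangential/normal to the graph, becomes an anisotropic stationarity equation for $u_r$ with a forcing term whose $L^p$ norm vanishes in the limit.

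The first key step is to identify the blow-up limit. Because $u_r(y) \to 0$ uniformly (thanks to $x_0$ being a Lebesgue point of $\nabla u$) while $\nabla u_r \rightharpoonup 0$, the varifolds $V_r$ associated to the graphs $\Gamma_{u_r}$ concentrate on the flat $m$-plane $\{y_{m+1}=\dots=y_{m+n}=0\}$, but a priori with a nontrivial Young measure $\nu_y$ supported on matrices in $\R^{n\times m}$, all averaging to $0$. Here is where (USAC) enters: since (USAC) implies the atomic condition, and the graphs $\Gamma_{u_r}$ have anisotropic first variation of vanishing $L^p$-norm (hence locally bounded total variation), I expect the atomic condition to force each $\nu_y$ to be a Dirac mass. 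Together with the zero mean, this yields $\nu_y=\delta_0$, i.e.\ $\nabla u_r \to 0$ \emph{strongly} in $L^q_{\mathrm{loc}}$ for every $q<\infty$. In excess-decay language: for every $\eta>0$, the excess $E(x_0,r):=\fint_{B_r(x_0)}|\nabla u-\nabla u(x_0)|^2\,dx$ is smaller than $\eta$ for all sufficiently small $r$.

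The second step is an $\varepsilon$-regularity statement of the form: there exists $\varepsilon_0>0$ such that if $E(x,r)<\varepsilon_0$ and $r^{p-m}\int_{B_r(x)}|H|^p\,dx<\varepsilon_0$, then $u\in C^{1,\alpha}$ on $B_{r/2}(x)$. To prove it I would linearize the anisotropic PDE around the constant gradient $A:=(\nabla u)_{x,r}$. The linearization is the constant-coefficient system $\mathrm{div}(L_A \nabla v)=0$, where $L_A$ is the Hessian of the integrand, evaluated on the tangent plane $T_A$ associated to $A$. The pointwise ellipticity constant of $L_A$, viewed as an operator on traceless matrices/perpendicular components in the right sense, is precisely what (USAC) provides (this is the ``uniform strict'' part of the condition). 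Given this, the standard Campanato/harmonic-approximation scheme (Acerbi--Fusco, Duzaar--Mingione) gives a one-step excess-decay: $E(x,\theta r) \le C\theta^{2}E(x,r) + C\,r^{p-m}\int_{B_r}|H|^p$, for a suitable $\theta\in(0,1/2)$; iterating yields Campanato-Morrey decay and hence H\"older continuity of $\nabla u$ with exponent $\alpha=(p-m)/p$.

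Finally, combining the two steps: by the blow-up argument, almost every $x_0\in\Omega$ has a scale $r_0>0$ on which the smallness hypotheses of the $\varepsilon$-regularity theorem hold; the smallness conditions are open in $x$, so $u\in C^{1,\alpha}$ on an open neighborhood of $x_0$. Taking the union yields the open full-measure set $\Omega_0$. The main obstacle I foresee is the strong-convergence step via the atomic condition: one must carefully check that the graphs $\Gamma_{u_r}$, after blow-up, genuinely fit into the class of varifolds to which the rectifiability/atomic machinery of the paper applies, and that the Young measure analysis can be performed uniformly in the scale $r$. The linearization step, while classical in spirit, also requires verifying that (USAC) not only gives ellipticity at a single plane but does so uniformly across the planes appearing in the iteration; this uniformity should follow from the $C^2$-smoothness of $\Psi$ and the openness of (USAC) on $\G$.
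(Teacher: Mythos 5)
Your high-level outline (blow-up to a small-excess scale, then an $\varepsilon$-regularity step via linearization and Campanato iteration) is the right skeleton, but the proposal has a genuine gap in the $\varepsilon$-regularity step, and the first step is doing the wrong work.

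On the first step: you invoke the atomic condition and a Young-measure compactness argument to deduce that $E(x_0,r)\to 0$ for a.e.\ $x_0$. This is unnecessary: $Du\in L^2$, so $E(x_0,r)\to 0$ at every Lebesgue point of $Du$, full stop. The Young-measure machinery of the paper (its Section \ref{s:compactness}) is a separate, qualitative result about blow-up limits of stationary graphs; it does not feed into the quantitative regularity argument, and it cannot, because a Dirac Young measure tells you nothing about the rate.

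On the second step, which is where the theorem actually lives: you write that (USAC) gives the ellipticity of the Hessian $L_A=D^2F_\Psi(A)$ and that ``the standard Campanato/harmonic-approximation scheme (Acerbi--Fusco, Duzaar--Mingione) gives a one-step excess decay.'' This is where the argument breaks. The harmonic-approximation scheme requires \emph{two} inputs: approximate $\mathcal{A}$-harmonicity (which you correctly get from the outer variation / Euler--Lagrange equation) and a Caccioppoli inequality of the form
\[
\fint_{B_r}\|Du-A\|^2\,dx \lesssim \frac{1}{r^2}\fint_{B_{2r}}\|u-\ell_A\|^2\,dx + (\text{curvature term}),
\]
where $\ell_A$ is an affine map with gradient $A$. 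For \emph{minimizers} of quasiconvex functionals the Caccioppoli inequality is routine, because you may compare $u$ with cut-off modifications. For \emph{stationary points} it is not available from ellipticity alone: the M\"uller--\v{S}ver\'ak construction produces Lipschitz stationary points of strictly quasiconvex functionals that are nowhere $C^1$, so there cannot be a purely ``elliptic linearization'' proof of the $\varepsilon$-regularity step. This is precisely the open problem the paper attacks, and Legendre--Hadamard ellipticity of $D^2F_\Psi$ at a plane cannot close the loop.

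What the paper does, and what your proposal misses, is to derive the Caccioppoli inequality directly from the \emph{varifold} first-variation identity by an anisotropy-specific choice of test vector field. Writing
\[
\int_U \langle B_\Psi(T_y\Gamma), Dg\rangle\, d\|V\| = -\int_U (H,g)\, d\|V\|,
\]
the paper tests with $g(y) = \varphi^2(y)\, B_{\Psi^*}(S^\perp)(y-p)$, where $\Psi^*$ is the dual integrand on the Grassmannian of $n$-planes. The miracle is twofold: the algebraic identity $B_\Psi(S)^t\, B_{\Psi^*}(S^\perp) = 0$ (equation \eqref{zeroprod}) kills the leading cross-term and lets one absorb the error by Young's inequality, while condition (USAC) gives the coercive lower bound $\langle B_\Psi(T),B_{\Psi^*}(S^\perp)\rangle \ge C\|T-S\|^2$ on the principal term. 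This delivers the Caccioppoli inequality \eqref{Caceq} (Proposition \ref{Cac}) and its graph version (Corollary \ref{graphCac}). Once that is in hand, your proposed blow-up for the excess decay, linearization to $D^2F_\Psi(A)$, and Campanato iteration all go through exactly as you envisaged (this is Proposition \ref{Tilt} and the conclusion of Theorem \ref{regteo}). So the missing piece is concrete: you must prove a Caccioppoli inequality for stationary graphs, and (USAC) is used there, not merely to certify ellipticity of $D^2F_\Psi$.

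A secondary point: the decay you should aim for is $E(x,\tau r)\le c\tau^2 E(x,r)$ at scales where the normalized curvature $r^{1-m/p}\|H'\|_p$ is dominated by the excess; one then interpolates between the two regimes using an auxiliary quantity $E(x_0,s)+\Lambda s^{1-m/p}\|H'\|_p$ to absorb the curvature term. This detail matters for the iteration; your one-step decay with an additive curvature term is fine, but make sure the iteration handles the case when the curvature term dominates.
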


Although there have been important results on the regularity of {\emph{minimizers}} for $\Sigma_\Psi$, \cite{Alm68,SSA,DPM,FIGAL,DUS}, the regularity for {\emph{stationary points}} of $\Sigma_\Psi$ is a completely open question in general codimension.
As mentioned above, our proof cannot rely on the monotonicity formula. Hence, we are not able to extend it to general rectifiable varifolds.
Instead, we introduce a novel ellipticity condition (USAC), which allows us to obtain a Caccioppoli inequality, giving an answer to \cite[Question 6]{DLDPKT}.

For the sake of exposition and without loss of generality, in this paper we will treat autonomous integrands as in Theorem \ref{A}. Nevertheless, we remark that Theorem \ref{A} can be easily extended to non autonomous integrands $\Psi\in C^2(\R^N\times\G,(0,\infty))$ satisfying (USAC) at every $x\in \R^N$, see Remark \ref{Remark:nonaut}. It is enough to observe that the first variation with respect to such integrands carries an additional term, which can be treated as part of the mean curvature term, see \cite[Equation (2.5)]{DDG}. 
\\

(USAC) reveals to be useful to tackle another open problem in the literature: providing non-trivial examples of anisotropic energies in general codimension satisfying the atomic condition (AC). Indeed, anisotropic energy functionals have attracted an increasing interest since the pioneering works of F. J. Almgren \cite{Alm68,Alm76}. In particular, the classical Almgren ellipticity (AE), (\cite[IV.1(7)]{Alm76} or~\cite[1.6(2)]{Alm68}), allowed Almgren to prove regularity for minimizers of anisotropic energies,~\cite{Alm68}. Very recently, an ongoing interest on the anisotropic Plateau problem has lead to a series of results,
see~\cite{HP2016gm,DDG2016,DDG2017,DDG2017b, DeR2016,FangKol2017,MOON}.
In~particular, in~\cite{DDG} G. De Philippis, the first author and F. Ghiraldin have introduced (AC) and proved it to be necessary and sufficient for the validity of the rectifiability of varifolds whose anisotropic first variation is a~Radon
measure. In codimension~one and in dimension~one,
(AC)~is proved to be equivalent to the strict convexity of the integrand, \cite{DDG}. However, in general codimension, characterizing (AC) in terms of more classical conditions (such as (AE), policonvexity, or others) remains an open problem, \cite[Page 2]{DDG}. The first author and S. Kolasi{\'{n}}ski have recently obtained one implication, proving that (AC) implies (AE), \cite{DRK}.
However, to date, in general codimension there are no examples of anisotropic energies (besides the area functional) satisfying (AC).  
We address this question in a result that we can roughly summarize as follows:
\begin{introtheorem}\label{B}
Integrands $\Psi$ in a $C^2$ neighborhood of the area functional satisfy (USAC); (USAC) implies (AC).
\end{introtheorem}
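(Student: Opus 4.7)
The plan is to split the statement into its two assertions and handle them sequentially, since the second does not use the first.

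For the claim that a $C^2$ neighborhood of the area functional satisfies (USAC), the strategy is to verify (USAC) directly for the area integrand $\Psi_{\mathrm{area}} \equiv 1$ on $\G$ and then propagate it by a stability argument. For the area integrand the bilinear form $B_\Psi$ (and, in coordinates, the Hessian $D^2\Psi$) has a very explicit structure coming from the geometry of the Grassmannian, so one expects the quantity entering the defining inequality of (USAC) to be not merely nonnegative but \emph{strictly} positive with a quantitative lower bound on the admissible test set—this is the Grassmannian reflection of the strict convexity of $|\cdot|$ on decomposable $m$-vectors. Once such a strict inequality is isolated, one observes that the quantities appearing in (USAC) depend continuously (in fact linearly/quadratically) on the values of $\Psi$, $D\Psi$ and $D^2\Psi$ on the compact set $\G$. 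A compactness argument on the admissible test configurations combined with the continuity of the (USAC) expression in the $C^2$-topology of $\Psi$ then yields an open $C^2$ neighborhood of $\Psi_{\mathrm{area}}$ on which the strict inequality persists uniformly.

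For the implication (USAC) $\Rightarrow$ (AC), I would work directly from the definition of (AC) recalled in \cite{DDG}: given a probability measure $\mu$ on $\G$ whose $B_\Psi$-weighted barycenter lies in a prescribed kernel, one must show that $\mu$ is a Dirac mass. The natural route is by contradiction: if $\mu$ is not atomic, then one can find two planes $T_1,T_2\in\spt\mu$ with $T_1\neq T_2$, and test the (USAC) inequality against the pair $(T_1,T_2)$ (or against a finite convex combination reconstructed from $\mu$). The cancellation required by (AC)—which would force the $B_\Psi$ projection onto the orthogonal complement to vanish—should then contradict the strict, uniform lower bound provided by (USAC), because (USAC) is precisely designed to bound from below the quantity that (AC) asks to vanish in a non-degenerate way. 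A polarization/Jensen-type argument allows one to pass from the two-plane test to a general non-atomic $\mu$.

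The main obstacle will be in the second implication. (AC) is phrased as a qualitative statement on measures on $\G$, while (USAC) is most naturally a pointwise/bilinear condition on pairs of planes or on tangent directions; bridging the two requires identifying the correct test measures and performing a careful polarization so that the integrated (AC) identity is reduced to a single strict (USAC) inequality. Getting the quantifiers right—ensuring that the \emph{uniform} strict lower bound in (USAC) forces $\mu$ to concentrate on a single atom, rather than giving only some weaker stability or near-atomicity—will be the delicate point, and it is also where the codimension-free nature of the argument (the source of novelty compared to the classical codimension-one / one-dimensional characterizations in \cite{DDG}) will have to come in.
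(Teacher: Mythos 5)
For the first half (area lies in the interior of the (USAC) class), your plan is broadly the same as the paper's — verify (USAC) for $\Psi\equiv1$ explicitly, then argue openness in $C^2$ — but the stability step is where you are vague in exactly the place that matters. Both sides of the (USAC) inequality vanish to second order as $T\to S$, so a naive compactness-plus-continuity argument on $\G\times\G$ does not close: continuity of the (USAC) expression in the integrand and in $(T,S)$ only gives you a uniform bound away from the diagonal. What has to be shown, and what the paper shows by an explicit (and rather lengthy) term-by-term expansion of $\langle B_{\Psi'}(T), B_{(\Psi')^*}(S^\perp)\rangle - \langle B_\Psi(T), B_{\Psi^*}(S^\perp)\rangle$, is that a $C^2$-small perturbation changes the (USAC) quantity by at most $c\|\Psi-\Psi'\|_{C^2}\|T-S\|^2$ — i.e.\ the perturbation vanishes to the same order as the lower bound. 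Without explicitly dividing out $\|T-S\|^2$ and controlling the quotient up to the diagonal, "the strict inequality persists uniformly" does not follow from the considerations you list.

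The second half has a more substantial gap. You propose picking two distinct planes $T_1,T_2\in\spt\mu$ and "testing (USAC) against the pair," then polarizing. This does not fit the structure: the (USAC) pairing takes one $m$-plane $T$ and one dual object $S^\perp$, and the hypothesis $\dim\Ker A(\mu)\ge n$ never enters your scheme. The piece you are missing is the key construction that makes the implication trivial once seen: choose an orthonormal system $v_1,\dots,v_n\subset\Ker A(\mu)$, let $P\in\mathbb{G}(N,n)$ be the projection onto $\spn\{v_1,\dots,v_n\}$, and observe that, since $B_{\Psi^*}(P)w=0$ for $w\perp \spn\{v_1,\dots,v_n\}$ and each $v_i\in\Ker A(\mu)$,
\[
\int_{\G}\langle B_\Psi(T),B_{\Psi^*}(P)\rangle\,d\mu(T)=\langle A(\mu), B_{\Psi^*}(P)\rangle=\sum_{i=1}^n (A(\mu)v_i,B_{\Psi^*}(P)v_i)=0.
\]
The integrand is $\geq 0$ with equality only at $T=P^\perp$ (already by (SAC), the non-quantitative form), so $\mu=\delta_{P^\perp}$. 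There is no need for two support points, no polarization, and no Jensen step; the mechanism is a single cleverly chosen dual test plane built from $\Ker A(\mu)$, not from $\spt\mu$. You should also carry the positivity hypothesis $\Psi>0$ explicitly, since it is needed to identify $\Ker B_\Psi(T_0)$ with $\Ker T_0$ and thus pin down the kernel dimension in the atomic case.
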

Hence, the anisotropic energies in a $C^2$-neighborhood of the area are the first functionals in the literature in general codimension to justify the regularity theory developed in \cite{DDG}. 
In particular, we deduce the rectifiability of varifolds with locally bounded anisotropic first variation for a $C^2$ neighborhood of the area functional.

(AC) can be relaxed to a condition (further denoted (AC1)), which  
is equivalent to the rectifiability of the mass of varifolds whose anisotropic first variation is a Radon measure, \cite{RDHR}. In codimension one, the convexity of the integrand implies (AC1), compare \cite[Section 3.3]{RDHR}. However, in general codimension, there are no non-trivial examples of anisotropic energies satisfying (AC1). 
We address this problem by proving:
\begin{introtheorem}\label{C}
Integrands $\Psi$ in a $C^1$ neighborhood of the area functional satisfy (AC1).
\end{introtheorem}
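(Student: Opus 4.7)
The plan is to parallel the proof of Theorem B at one derivative lower. Structurally, since (AC1) controls only the rectifiability of the mass of the varifold (as opposed to (AC), which governs rectifiability of the full varifold), I expect the appropriate sufficient condition to involve only the values and first derivatives of $\Psi$, rather than second-order data. This accounts naturally for the $C^1$ neighborhood in the statement, as opposed to the $C^2$ neighborhood required by Theorem B.

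Concretely, I would first recast (AC1), following \cite{RDHR}, as the nonnegativity of a functional of the form
\[
\mathcal{F}[\Psi](T,\mu)\;\geq\;0
\]
for every $T\in\G$ and every probability measure $\mu$ on $\G$ satisfying the appropriate barycenter/projection constraint coming from the first variation. Two features of $\mathcal{F}$ drive the proof: it depends affinely on $\Psi$ through the values $\Psi(S)$ and $D\Psi(T)$, and for $\Psi=\mathrm{Area}$ the inequality $\mathcal{F}[\mathrm{Area}]\geq 0$ reduces to the triangle inequality for simple $m$-vectors, with equality forcing $\mu=\delta_T$. Writing $\Psi=\mathrm{Area}+\eta$ and using affinity, $\mathcal{F}[\Psi]=\mathcal{F}[\mathrm{Area}]+\mathcal{F}[\eta]$ with $|\mathcal{F}[\eta](T,\mu)|\leq C\,\|\eta\|_{C^1}$ multiplied by a quantity depending on $(T,\mu)$. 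Away from the equality set of $\mathcal{F}[\mathrm{Area}]$ a uniform lower bound combined with this linear perturbation gives $\mathcal{F}[\Psi]\geq 0$ provided $\|\eta\|_{C^1}$ is small enough, which is the desired (AC1).

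The main obstacle, as in the analogous step for Theorem B, is the quantitative analysis near the equality set of $\mathcal{F}[\mathrm{Area}]$: a crude bound $|\mathcal{F}[\eta]|\leq C\|\eta\|_{C^1}$ is useless when $\mathcal{F}[\mathrm{Area}]$ is itself close to zero. To handle this I plan to prove a quantitative triangle-inequality gap of the form $\mathcal{F}[\mathrm{Area}](T,\mu)\geq c\,\omega(\mu,\delta_T)$, where $\omega$ is an explicit modulus of concentration of $\mu$ at $T$, and then use the barycenter constraint to show that $\mathcal{F}[\eta](T,\mu)$ also vanishes to at least the same order in $\omega$ as $\mu\to\delta_T$ (since $\mathcal{F}[\eta](T,\delta_T)=0$ by the constraint itself). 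A compactness argument on $\G$, which is compact, then allows the constants to be chosen uniformly in $T$, and fixing the $C^1$-size of the neighborhood of the area smaller than $c/C$ closes the proof.
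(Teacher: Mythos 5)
Your guiding intuition --- that a first-order sufficient condition should be in play, accounting for the $C^1$ neighborhood --- is correct, but the route you sketch diverges from the paper's and contains a gap that would be hard to close. The paper introduces the pointwise, scalar condition (SAC1): there exists $\delta < \frac{1}{m-1}$ with $\langle B_\Psi(T)w,w\rangle \le (1+\delta)\Psi(T)\|w\|^2$ for all $T \in \G$, $w \in \R^N$. This is manifestly an open condition in the $C^1$ topology on integrands, since $B_\Psi$ depends continuously on $\Psi$ measured in $C^1$. That (SAC1) implies (AC1) is then a short linear-algebra argument: the identity $\tr B_\Psi(T) = m\Psi(T)$ combined with the eigenvalue bound $(B_\Psi(T)w,w) \le (1+\delta)\Psi(T)$ gives, after integrating against $\mu$, that a kernel of dimension $d>n$ would force $m \le (N-d-1)(1+\delta) < m$, a contradiction. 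No probability measure $\mu$ is ever ``localized,'' no barycenter or variational constraint appears, and no compactness argument over $\mathcal M(\G)$ is needed.

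Your proposal instead tries a perturbation argument over the whole space of measures, recasting (AC1) as nonnegativity of some functional $\mathcal F[\Psi](T,\mu)$ with equality forcing $\mu = \delta_T$. This already conflates (AC1) with (AC): in the paper's Definition of the atomic condition, the ``rigidity at equality'' ($\dim\Ker A(\mu)=n \Rightarrow \mu = \delta_{T_0}$) is precisely the content of (AC2), not (AC1). More seriously, the step you yourself flag as the main obstacle is genuinely a gap: from $\mathcal F[\eta](T,\delta_T)=0$ you cannot deduce that $\mathcal F[\eta](T,\mu)$ vanishes to the same order as the degeneracy modulus $\omega(\mu,\delta_T)$ of $\mathcal F[\mathrm{Area}]$. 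Vanishing at one point gives only continuity, not a Lipschitz (or modulus) bound in $\mu$ with the right rate; such an estimate would require control on a derivative of $\mathcal F[\eta]$ in $\mu$ transverse to the equality set, and there is no a~priori reason this derivative is bounded by $\|\eta\|_{C^1}$ in a way compatible with the degeneracy of $\mathcal F[\mathrm{Area}]$. Indeed, if such a bound held robustly, an essentially identical argument would yield (AC) in a $C^1$ neighborhood of the area, contradicting the paper's observation (following Proposition~\ref{OPENAC2}) that (AC) has empty $C^1$-interior in codimension one. The paper's (SAC1) route avoids all of this because it is a pointwise algebraic inequality on $\G$, which is compact, with a visibly open defining constraint $\delta < \frac{1}{m-1}$.
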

Theorem \ref{C} implies that, in codimension one, (AC1) is a strictly weaker notion than convexity of the integrand, see Remark \ref{remark:AC1}. This shows that the result of \cite[Page 656, point (b)]{RDHR} is indeed optimal. We also find a class of examples of functionals satisfying (AC1), which are not $C^1$-close to the area, see Theorem \ref{Lp}.
\\
\\
There are profound connections between anisotropic geometric variational problems and questions arising in the study of polyconvex energies, the latter being roughly speaking a parametric version of the former, see \cite[Page 229]{EVA}.
This link was investigated in \cite{DLDPKT,HRT,TR}. In particular, there is a \emph{canonical} way to associate a function $F_\Psi: \R^{n\times m}\to \R$ to an integrand $\Psi$ defined on $\mathbb{G}(N,m)$, in such a way that a Lipschitz graph $\llbracket\Gamma_u\rrbracket$ is stationary for $\Sigma_\Psi$ if and only if $u$ is stationary for the energy
\[
\E_{F_\Psi}(u) = \int_{\Omega} F_\Psi(Du(x))dx.
\]
A graph is said to be stationary if and only if it is critical for outer and inner variations. In \cite{TR}, the second author proves the regularity for 2-dimensional Lipschitz graphs that are stationary with respect to polyconvex integrands close to the area. This result is close in spirit to Theorem \ref{A}, but it carries deep differences: in \cite{TR}, the \emph{closeness} of the two functionals depends on the Lipschitz constant of the stationary graph, while in Theorem \ref{A} the closeness is more quantitative, depending just on $n$ and $m$. On the other hand, the proof of \cite{TR} yields full regularity for stationary points, and is based on completely different methods coming from the theory of differential inclusions, introduced in \cite{SAP}. 

In \cite{DLDPKT,HRT}, the second author, together with C. De Lellis, De Philippis, B. Kirchheim and J. Hirsch, investigated the possibility of constructing a nowhere regular stationary graph for $\Sigma_\Psi$, exploiting the convex integration techniques introduced by S. M\"uller \& V. \v Sver\'ak and L. Sz{\'{e}}kelyhidi in \cite{SMVS,LSP}. However it is proved that it is impossible to complete this task \emph{using the same strategy} of \cite{SMVS,LSP}, see \cite{DLDPKT,HRT}. In particular, the authors prove that if the polyconvexity of $F_\Psi$ complies with the stationarity of $u$, then one can exclude a certain type of Young measures, referred to as \emph{$T'_N$ configurations}, that proved to be the crucial tool in \cite{SMVS,LSP}. Here we show a much more systematic result in this direction:

\begin{introtheorem}\label{D}
(AC) excludes non-trivial Young measures in the case of stationary graphs.
\end{introtheorem}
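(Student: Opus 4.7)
The plan is to transfer a non-trivial Young measure into a stationary anisotropic varifold with non-trivially dispersed Grassmannian fiber, and then contradict the rectifiability of such varifolds guaranteed by (AC) via the main theorem of \cite{DDG}. Concretely, suppose $u_k \in \Lip(\Omega,\R^n)$ is a uniformly Lipschitz sequence stationary for $\E_{F_\Psi}$ whose gradients generate a Young measure $\{\nu_x\}_{x\in\Omega}$. Passing to a subsequence, $u_k \to u$ uniformly and the barycenter satisfies $\int A\,d\nu_x(A) = Du(x)$ for a.e.\ $x$. By the canonical equivalence recalled just before Theorem~D, stationarity of $u_k$ for $\E_{F_\Psi}$ is equivalent to the vanishing of the $\Psi$-first variation of the integer rectifiable graph varifold $V_k := \llbracket \Gamma_{u_k}\rrbracket$.

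Up to a further subsequence, $V_k \weak V$ as varifolds. Since $\Psi\in C^1$ and the masses of $V_k$ are uniformly bounded, the $\Psi$-first variation is continuous along this convergence, so $\delta_\Psi V = 0$. The explicit form of $V$ is read off from $\nu_x$ by testing against $\varphi \in C_c^0(\R^N\times \G)$:
\[
\int \varphi\, dV = \lim_k \int_\Omega \varphi\bigl((x,u_k(x)),\Pi(Du_k(x))\bigr) J(Du_k(x))\, dx = \int_\Omega \int_{\R^{n\times m}} \varphi\bigl((x,u(x)),\Pi(A)\bigr) J(A)\, d\nu_x(A)\, dx,
\]
where $\Pi(A)\in \G$ denotes the tangent plane to $\mathrm{graph}(A)$ and $J(A)=\sqrt{\det(I+A^T A)}$ is the area Jacobian. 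Hence $V$ is supported on $\Gamma_u$ and its fiber at $(x,u(x))$ is the pushforward under $\Pi$ of the measure $J(A)\,d\nu_x(A)$.

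Now invoke (AC): since $V$ has zero (in particular locally bounded) $\Psi$-first variation, $V$ is rectifiable. Consequently, for $\|V\|$-a.e.\ $z$ the fiber of $V$ at $z$ is a single Dirac mass at the approximate tangent plane $T_z\Gamma_u$. Because $\Pi:\R^{n\times m}\to \G$ is injective (distinct matrices produce distinct graph planes) and $J(A)\geq 1$, a pushforward under $\Pi$ of $J(A)\,d\nu_x(A)$ being Dirac forces $\nu_x$ itself to be Dirac at some $A(x)$. The barycenter identity then gives $A(x)=Du(x)$, so $\nu_x = \delta_{Du(x)}$ almost everywhere, i.e.\ the Young measure is trivial.

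The main obstacle I expect is the precise decoding of the limit varifold $V$ as a weighted pushforward of $\nu_x$ and the correct formalization of ``Young measure in the case of stationary graphs'': one must handle simultaneously the uniform convergence $u_k \to u$ (which forces $\spt V \subset \Gamma_u$) and the oscillation of $Du_k$ (which is what disperses the fiber), and one must rule out that part of the mass of $V_k$ escapes to infinity or concentrates outside the graph of the limit. Once these two points are in place, the remainder is a clean rectifiability-versus-fibered-oscillation dichotomy: (AC) forbids the Grassmannian fiber of a stationary varifold from splitting across distinct $m$-planes, whereas a non-trivial Young measure produces exactly such a splitting.
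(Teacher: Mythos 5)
Your proposal is correct and follows the same essential strategy as the paper: pass the graph varifolds to a weak-* limit with zero $\Psi$-first variation, invoke the rectifiability theorem for varifolds with bounded anisotropic first variation (which is where (AC) enters, via \cite{DDG} or \cite{DeR2016}), and conclude via injectivity of the graph--plane map that a Dirac fiber on $\G$ forces a Dirac Young measure on $\R^{n\times m}$. The paper's version (Theorems \ref{teo:comp0} and \ref{teo:comp}) differs in two ways worth noting. First, it works at the level of \emph{div-curl} differential inclusions with $\dist(W_j,K_{F_\Psi})\to 0$ and only $\dv A_j,\dv B_j$ equibounded in $L^1$, which is strictly more general than exactly stationary $u_k$ and is precisely the setting relevant to the convex-integration program of \cite{DLDPKT}; your proof would need a small addendum (as in \eqref{structure}--\eqref{structure1}) to show $\delta_\Psi V$ remains a Radon measure in that approximate setting. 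Second, the paper goes further and pins down the limit as the multiplicity-one varifold $V=\llbracket\Gamma_u\rrbracket$ (via the density lower bound and the projection identity $\pi^\#V=\llbracket\Omega\times\{0\}^n\rrbracket$), whereas your argument is slightly more economical, since rectifiability alone already collapses the disintegrated fiber to a Dirac at $T_z\Gamma_u$ and the injectivity of $\Pi=h$ together with $J=\A\ge 1$ finishes the job --- in fact you never need $\theta\equiv 1$. Your identification of the fiber as $\tfrac{1}{\theta(x)}\Pi_\#(J(A)\,d\nu_x(A))$ is a clean reformulation of the paper's step \eqref{YOUNG}--\eqref{YOUNG2} and the orthogonality trick with $h(Du(x))^\perp$; both compute the same thing. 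The one point you should make explicit before invoking the rectifiability theorem is the density lower bound $\theta^m_*(z,\|V\|)>0$ for $\|V\|$-a.e.\ $z$ (Assumption needed in the cited rectifiability result), but it follows immediately from your own formula since $\theta(x)=\int J\,d\nu_x\ge 1$; the paper proves it more laboriously via \eqref{ineqj}--\eqref{final2} because it does not have this formula at hand.
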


Theorem \ref{D} provides an answer to \cite[Question 4]{DLDPKT}. In \cite[Question 1]{MSK}, B. Kirchheim, S. M\"uller and V. \v Sver\'ak leave as an open question to find rank-one convex functions whose differential inclusion associated to critical points (for outer variations only) supports only trivial Young measures. This question is largely open, and we provide here an answer in a neighborhood of the area (in arbitrary dimension and codimension), adding the hypothesis of criticality for inner variations.
To conclude, we remark that our regularity Theorem \ref{A} provides partial answers to questions that naturally arose in the context of quasiconvex energies, \cite[Page 65]{KRI}, and \cite[Question 2]{MSK}.
\\
\\
The paper is organized as follows. In Section \ref{s:notation} we recall some technical preliminaries concerning the theory of varifolds and Young measures. In Section \ref{s:condition} we introduce (SAC1) and (USAC), and we show their connection to (AC1) and (AC), thus proving Theorems \ref{B} and \ref{C}. Section \ref{s:regularity} is devoted to show the regularity of graphs with $L^p$-bounded anisotropic mean curvature with respect to a functional satisfying (USAC), i.e. Theorem \ref{A}. In Section \ref{s:compactness} we show the absence of nontrivial Young measures if (AC) holds, i.e. Theorem \ref{D}. Finally, in Section \ref{s:ex} we give an explicit example of a class of anisotropic energies in high codimension, far from the area functional, satisfying (AC1).

\section{Technical preliminaries and notation}\label{s:notation}

In this section, we recall the main definitions and results concerning varifolds and Young measures that we will need in the rest of the paper.

\subsection{Basic notation}
Given $A,B\in \R^{d\times d}$ and  $v,w\in \R^d$, we denote the inner products as $\langle A,B\rangle=\sum_{ij=1}^d A_{ij}B_{ij}$ and $(v,w)=\sum_{i=1}^d v_{i}w_{i}$. $\|A\|$ and $\|v\|$ will be the norms induced by the previous inner products. $A^t$ will denote the transpose of $A$.

\subsection{Measures  and rectifiable sets}\label{measures}
Given a locally compact separable metric space $Y$, we denote by \(\mathcal M(Y)\) the set of positive Radon measure on \(Y\). Given a Radon measure \(\mu\) we denote by \(\spt (\mu)\) its support. For a  Borel set \(E\),  \(\mu\llcorner E\) is the  restriction of \(\mu\) to \(E\), i.e. the measure defined by \([\mu\llcorner E](A)=\mu(E\cap A)\). 
Eventually, we denote by  $\mathcal{H}^m$ the $m$-dimensional Hausdorff measure.
\\
\\
A set \(K\subset \R^N\) is said to be \(m\)-rectifiable if it can be covered, 
up to an \(\mathcal{H}^m\)-negligible set, by countably many \(C^1\) $m$-dimensional 
submanifolds.  In the following we will only consider \(\mathcal{H}^m\)-measurable sets. Given an $m$-rectifiable set $K$, we denote with $T_xK$ the approximate tangent space of $K$ at $x$, which exists for $\mathcal{H}^m$-almost every point $x \in K$, \cite[Chapter 3]{SimonLN}.  A positive  Radon measure $\mu\in \mathcal M(\R^N)$ is said to be $m$-rectifiable if there exists an $m$-rectifiable set $K\subset \R^N$ such that $\mu= \theta\mathcal{H}^m \llcorner K$ for some Borel function \(\theta: \R^N\to (0,\infty)\).
\\
\\
For  \(\mu\in \mathcal M(\R^N) \) we consider its lower  and upper  \(m\)-dimensional densities at \(x\):
\[
\theta_*^m(x,\mu)=\liminf _{r\to 0} \frac{\mu(B_r(x))}{ \omega_m r^m}, \qquad \theta^{m*}(x,\mu)=\limsup_{r\to 0} \frac{\mu(B_r(x))}{ \omega_m r^m},
\]
where \(\omega_m\) is the volume of the \(m\)-dimensional unit ball in \(\R^m\). In case these  two limits are equal, we denote by \(\theta^m(x,\mu)\) their common value. Note that if $\mu= \theta\mathcal{H}^m \llcorner K$  with \(K\) rectifiable, then \(\theta(x)=\theta_*^m(x,\mu)=\theta^{m*}(x,\mu)\) for \(\mu\)-a.e. \(x\), see~\cite[Chapter 3]{SimonLN}.

If \(\eta :\R^n\to \R^n\) is a Borel map and \(\mu\) is a Radon measure, we let \(\eta_\# \mu=\mu\circ \eta^{-1}\) be the push-forward of \(\mu\) through \(\eta\). 

\subsection{Varifolds}\label{introvar}

We will use $\mathbb{G}(n + m,m)$ to denote the Grassmanian of (un-oriented) $m$-dimensional 
linear subspaces in $\R^{n + m}$ (often referred to as $m$-planes). We will always denote $N = m + n$. Moreover, we identify the spaces
\begin{equation}\label{iden}
\G = \{P \in \R^{N\times N}: P = P^t, \, P^2 = P, \, \tr(P) = m\}.
\end{equation}

\begin{Def}
An $m$-dimensional varifold $V$ in $\R^N$ is a Radon measure on $\R^{N}\times \G$. The varifold $V$ is said to be rectifiable if there exists an $m$-rectifiable set $\Gamma$ and an $\mathcal{H}^m\llcorner \Gamma$-measurable function $\theta: \Gamma \to (0, \infty)$ such that
\[
V(f) = \int_{\Gamma}f(x,T_x\Gamma)\theta(x)d\mathcal{H}^m(x), \qquad \forall f \in C_c(\R^N\times \G).
\]  
In this case, we denote $V = (\Gamma,\theta)$. If moreover $\theta$ takes values in $\mathbb N$, $V$ is said integer rectifiable. If $\theta = 1$ $\mathcal{H}^m\llcorner \Gamma$-a.e., then we will write $V = \llbracket \Gamma\rrbracket$.
\end{Def}

We will use $\|V\|$ to denote the projection in $\R^N$ of the measure $V$, i.e.
\[
\|V\|(A) := V(A\times \G), \qquad \forall A \subseteq \R^N, A \text{ Borel}.
\]
Hence  $\|V\|=p_\# V$, where \(p: \R^N\times \G\to \R^N\) is the projection onto the first factor and the push-forward  is intended in the sense of Radon measures.
\\
\\
Given an $m$-rectifiable varifold $V = (\Sigma,\theta)$ and $\Psi : \Sigma \to \R^N$ Lipschitz and proper (i.e. $\Psi^{-1}(K)$ is compact for every $K\subset \R^N$ compact), the image varifold of $V$ under $\psi$ is defined by
$$\psi^\#V := (\psi(\Sigma), \tilde \theta), \quad \mbox{where} \quad  \tilde \theta (y) := \sum_{x\in \Sigma \cap \psi^{-1}(y)}\theta(x).$$
Since $\psi$ is proper, we have that $\tilde \theta \mathcal{H}^m\llcorner \psi(\Sigma)$ is locally finite. By the area formula we get
$$\psi^\#V(f)=\int_{\psi(\Sigma)}f(x,T_x\Sigma)\tilde \theta(x)d\mathcal{H}^m(x)=\int_{\Sigma}f(\psi(x),d_x\psi(T_x\Sigma))J\psi(x,T_x\Sigma)\theta(x) d\mathcal{H}^m(x),$$
for every $f\in C^0_c(\R^N\times \G)$. Here $d_x\psi(S)$ is the image of $S$ under the linear map $d_x\psi(x)$ and 
\[
J\psi(x,S):=\sqrt{\det\Big(\big(d_x\psi\big|_S\big)^t\circ d_x\psi\big|_S\Big)}
 \]
 denotes the $m$-Jacobian determinant of the differential $d_x\psi$ restricted to the $m$-plane $S$, 
 see \cite[Chapter 8]{SimonLN}. Note that the push-forward of a varifold \(V\) is {\em not} the same as the push-forward of the  Radon measure \(V\) through a map $\psi$ defined on $\R^N\times \G$ (the latter being  denoted with an expressly different notation: \(\Psi_\# V\), see Section \ref{measures}).
\\
\\
Given $\Psi \in C^1(\G)$, we define the anisotropic energy on a rectifiable varifold $V=(\Gamma,\theta)$ as
\begin{equation}\label{envar}
\Sigma_\Psi(V):= \int_{\Gamma}\Psi(T_x\Gamma)\theta(z)d\mathcal{H}^m(z),\qquad \mbox{where } V = (\Gamma,\theta).
\end{equation}
We define the first variation of $V = (\Gamma,\theta)$ with respect to $\Psi$ as 
\[
[\delta_\Psi V](g) := \left.\frac{d}{d\eps}\right|_{\eps = 0}\Sigma_\Psi((\Phi_\eps)^\#(V)), \quad \forall g \in C^1_c(\R^N,\R^N),
\]
where $\Phi_\eps:=\id+\eps g$ is the flow generated by $g$. We rely on \cite[Lemma A.2]{DDG} to write the following expression for the variations:
\[
[\delta_\Psi V](g) = \int_{\R^N}\langle B_\Psi(T_x\Gamma),Dg(x)\rangle d\|V\|,
\]
where $B_\Psi(T)$ is defined through the equality
\begin{equation}\label{eq:B}
\langle B_\Psi(T), L\rangle := \Psi(T) \langle T,L\rangle+\big\langle D\Psi(T) ,\,T^\perp L T +(T^\perp L T)^t\big\rangle, \qquad \forall L\in \R^{N\times N}.
\end{equation}
Here, $D\Psi(T)$ denotes the differential of $\Psi$ once extended to a $C^1$ function defined in a small neighborhood of $\G$ in $\R^{N\times N}$. Let $U\subset \R^N$ open, we say that a varifold $V = (\Gamma,\theta)$ has \emph{$\Psi$-mean curvature in $L^p$ in $U$} if there exists a map $H \in L^p(\Gamma \cap U,\R^N; \mathcal{H}^m\llcorner \Gamma)$ such that
\begin{equation}\label{boundedmean}
[\delta_\Psi V](g) = -\int_{\R^N}(H(x),g(x))d\|V\|(x), \quad \forall g \in C^1_c(U,\R^N).
\end{equation}
If $H$ can be chosen to be $0$, we say that the varifold $V$ is \emph{stationary}.
\\
\\
The graph $\Gamma_u$ of a Lipschitz function $u : \Omega\subset \R^m \to \R^n$ defines an $m$-dimensional varifold with multiplicity 1, $\Gamma = \llbracket\Gamma_u\rrbracket$. Without loss of generality, we can suppose that the graph is parametrized on the first $m$ coordinates, so that $\Gamma_u := \{(x,y) \in \R^{m + n}: y = u(x)\}$. For notational purposes, we define the following maps:
\begin{equation}\label{maps}
M(X):=
\left(
\begin{array}{c}
id_m\\
X
\end{array}
\right), \quad \mbox{and} \quad \A(X) := \sqrt{\det(M(X)^tM(X))}, \quad \forall X \in \R^{n\times m},
\end{equation}
where $\A(X) $ simply corresponds to the area element of $X$, and
\begin{equation}\label{h}
h: \R^{n\times m} \to \R^{N\times N}, \quad h(X) := M(X)(M(X)^tM(X))^{-1}M(X)^t.
\end{equation}
Recalling \eqref{iden}, it is easily seen that $h(\R^{n\times m}) \subseteq \G$, and that $h$ is injective (it is, in fact, one of the \emph{canonical} charts of $\G$).
The map $h$ allows us to define the function $F_\Psi:\R^N \times \R^{n\times m} \to (0,+\infty)$ as:
\begin{equation}\label{functional}
F_\Psi(X) := \Psi(h(X))\A(X).
\end{equation}
through the identification \eqref{iden} and \eqref{h}.

\subsection{Young measures}

We refer the reader to \cite[Section 3]{DMU} for the results concerning Young measures that we are going to state without proof. Let $p > 1$ and let $v_j: \Omega \subseteq \R^m \to \R^d$ be a sequence of weakly convergent maps in $L^p(\Omega,\R^d)$ to an $L^p(\Omega,\R^d)$ map $v: \Omega \to \R^d$. The fundamental theorem on Young measures states that, up to considering a (non-relabeled) subsequence $v_j$, there exists a measurable map
\[
\Omega \ni x \mapsto \nu_x \in \mathcal M(\R^d), \quad \mbox{with} \quad \nu_x(\R^d)=1,
\]
such that, for every $f \in C(\R^{d})$ with
\[
|f(\Lambda)| \le C(1 + \|\Lambda\|^s), \qquad \forall \Lambda \in \R^{d},
\]
for some $s \in [1,p)$, it holds
\[
\int_{\Omega}f(v_j(x))\eta(x) dx \to \int_{\Omega}\left(\int_{\R^{d}}f(\Lambda)d\nu_x(\Lambda)\right)\eta(x)  dx, \qquad  \forall \eta \in C^\infty_{c}(\Omega).
\]
Moreover,
\begin{equation}\label{comp}
v_j \to v \text{ in measure} \quad \Leftrightarrow \quad  \nu_x = \delta_{v(x)}  \text{ for $\mathcal{H}^m$-a.e. $x \in \Omega$}.
\end{equation}
This in particular implies the strong convergence of $v_j$ to $v$ in $L^{q}(\Omega,\R^d)$, $\forall q \in [1,p)$.

\section{The atomic condition and related ellipticity conditions}\label{s:condition}

Let $\Psi: \mathbb{G}(N,m) \to (0,+\infty)$ be a $C^1$ function, and let $N = n + m$. As recalled in Subsection \ref{introvar}, the formula for the first variation of a varifold $V = (\Gamma,\theta)$ is given by
\begin{equation}\label{varB}
[\delta_\Psi T](g) = \int_{\Gamma}\langle B_\Psi(T_x\Gamma),Dg(x)\rangle \theta d\mathcal{H}^m(x).
\end{equation}
Recalling \eqref{eq:B}, we can readily compute the following expression of $B_\Psi$:
\[
B_\Psi(T) = \Psi(T)T + T^\perp d\Psi(T)T, \qquad \mbox{where }\quad d\Psi(T) := D\Psi(T) +D\Psi(T)^t.
\]
It is crucial to observe that, even if $D\Psi$ is the differential of $\Psi$ as a map defined on $\R^{N\times N}$, i.e. computed after extending $\Psi$ from $\G$ to a neighborhood of $\G$ in the whole space $\R^{N\times N}$, we have that $B_\Psi(T)$ does not depend on the particular chosen extension. This is true since, as computed in \cite[Lemma A.1]{DDG},
\[
T^\perp L T +(T^\perp L T)^t \in \Tan_T\G, \quad \forall L \in \R^{N\times N},
\]
and hence
\[
\big\langle D_T\Psi(x,T) ,\,T^\perp L T +(T^\perp L T)^t\big\rangle
\]
only depends on the values of $\Psi$ \emph{on} $\G$. In particular, $T^\perp d\Psi(T)T$ represents the differential of $\Psi$ on the manifold $\G$.
\\
\\
For every Borel  probability measure $\mu \in \mathcal M (\G)$, let us define  
\begin{equation}\label{eq:A}
A(\mu):=\int_{\G} B_\Psi(T)d\mu(T)\in\R^{N\times N}.
\end{equation}
\begin{Def}\label{atomica}
We say that \(\Psi\) satisfies the \emph{atomic condition} $(AC)$ if the following two conditions hold:
\begin{itemize}
\item[(AC1)]  \(\dim\Ker A(\mu)\le n\) for every probability measure \(\mu \in \mathcal M (\G)\),
\item[(AC2)]  if  \(\dim\Ker A(\mu)= n\), then  \(\mu=\delta_{T_0}\) for some \(T_0\in \G\).
\end{itemize}
\end{Def}

Aim of this section is to define two classes of integrands, see Definition \ref{DEFAC1} and \ref{DEFAC2}. We will show that the first class satisfies (AC1) in Proposition \ref{PROOFAC1} and that the second class satisfies (AC) in Proposition \ref{PROOFAC2}. The interest in these conditions is that they are \emph{open}, see Proposition \ref{OPENAC1} and \ref{OPENAC2}. Since the area functional satisfies the atomic condition, we will conclude that the integrands in a $C^1$ (resp. $C^2$) neighborhood of the area functional satisfy (AC1) (resp. (AC)), thus providing the first non-trivial class of integrands satisfying (AC1) or (AC) in general dimension and codimension. These results prove Theorems \ref{B} and \ref{C}.

\begin{Def}\label{DEFAC1}
We say that $\Psi: \mathbb{G}(N,m) \to \R$ satisfies the \emph{scalar (AC1) condition}, (SAC1), if there exists $\delta <\frac 1{m-1}$ such that
\begin{equation}\label{condition}
\langle B_\Psi(T)w,w\rangle\leq (1+\delta) \Psi(T) \|w\|^2, \qquad \forall T\in \mathbb{G}(N,m), w\in \R^N.
\end{equation}
\end{Def}

Before giving the second definition, let us introduce the following notation. Given a $C^1$ function $\Psi:\G \to \R$, we denote its dual function with $\Psi^*:\mathbb{G}(N,n) \to \R$, namely the function defined as
\[
\Psi^*(P) := \Psi(\id - P) = \Psi(P^\perp),\quad \forall P \in \mathbb{G}(N,n).
\]
Since one has
\[
B_\Psi(T) = \Psi(T)T + T^\perp d\Psi(T)T, \quad d\Psi(T) = D\Psi(T) +D\Psi(T)^t
\]
a simple computation shows that
\begin{equation}\label{BF*}
d\Psi^*(S^\perp)= -d\Psi(S)\quad \mbox{and hence} \quad B_{\Psi^*}(S^\perp) =  \Psi(S)S^\perp - S d\Psi(S)S^\perp,\quad \forall S \in \mathbb{G}(N,m),
\end{equation}
and hence elementary linear algebra gives us the following useful result:
\begin{equation}\label{zeroprod}
B_{\Psi}(S)^tB_{\Psi^*}(S^\perp) = 0,\quad \forall S \in \mathbb{G}(N,m).
\end{equation}

\begin{Def}\label{DEFAC2}
We say that $\Psi$ satisfies the \emph{scalar atomic condition} (SAC) if
\[
\langle B_\Psi(T), B_{\Psi^*}(S^\perp)\rangle > 0, \quad\forall T \neq S \in \mathbb{G}(N,m),
\]
and it satisfies the \emph{uniform scalar atomic condition} (USAC) if there exists a constant $C> 0$ independent of $T,S$ such that
\[
\langle B_\Psi(T), B_{\Psi^*}(S^\perp)\rangle \ge C\|T-S\|^2, \quad\forall T \neq S \in \mathbb{G}(N,m),
\]
\end{Def}

We will now show that if $\Psi$ satisfies (SAC1), then $\Psi$ fulfills (AC1).

\begin{prop}\label{PROOFAC1}
If $\Psi$ is a positive integrand satisfying (SAC1), then $\Psi$ satisfies (AC1).
\end{prop}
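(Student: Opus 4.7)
The plan is a proof by contradiction. I would suppose that there exists a probability measure $\mu\in\mathcal M(\G)$ with $\dim\Ker A(\mu)\ge n+1$, and extract a strict positivity that is incompatible with the defining identity of $A(\mu)$.

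First step: pick any orthonormal family $w_1,\dots,w_{n+1}$ inside $\Ker A(\mu)$. Since $A(\mu)w_i=0$, in particular $\langle A(\mu)w_i,w_i\rangle=0$ for every $i$, and exchanging the finite sum with the integral in \eqref{eq:A} I obtain
\[
\int_{\G}\sum_{i=1}^{n+1}\langle B_\Psi(T)w_i,w_i\rangle\,d\mu(T)=0.
\]

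Second (and crucial) step: I split the total trace of $B_\Psi(T)$ along $V:=\spn(w_1,\dots,w_{n+1})$ and $V^\perp$. Using that $T\in\G$ is an orthogonal projection of rank $m$ (so $\tr(T)=m$ and $TT^\perp=0$) together with the cyclicity of the trace applied to the second summand in $B_\Psi(T)=\Psi(T)T+T^\perp d\Psi(T)T$, I get $\tr(B_\Psi(T))=m\Psi(T)$. Completing the $w_i$'s with an orthonormal basis $v_1,\dots,v_{m-1}$ of $V^\perp$ and applying the pointwise bound (SAC1) to each $v_j$ yields
\[
\sum_{j=1}^{m-1}\langle B_\Psi(T)v_j,v_j\rangle \le (m-1)(1+\delta)\Psi(T),
\]
and hence
\[
\sum_{i=1}^{n+1}\langle B_\Psi(T)w_i,w_i\rangle=m\Psi(T)-\sum_{j=1}^{m-1}\langle B_\Psi(T)v_j,v_j\rangle\ge \bigl(1-(m-1)\delta\bigr)\Psi(T),
\]
which is a strictly positive multiple of $\Psi(T)$ thanks to the hypothesis $\delta<\frac{1}{m-1}$.

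Final step: integrating this inequality against $\mu$ and using $\Psi>0$ produces a strictly positive lower bound for a quantity that the first step forced to vanish, a contradiction. The entire argument depends only on the symmetric part of $B_\Psi(T)$, so the asymmetry of $B_\Psi$ plays no role and no further regularity is needed. The only step requiring real care is the trace bookkeeping in an ONB adapted to $V$ and $V^\perp$; modulo this, the conclusion is immediate, and the constant $\frac{1}{m-1}$ in the definition of (SAC1) is exactly the threshold that keeps $1-(m-1)\delta$ positive.
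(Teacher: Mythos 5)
Your proof is correct and is essentially the same as the paper's: both arguments hinge on the trace identity $\tr(B_\Psi(T))=m\Psi(T)$, a decomposition of an orthonormal basis into kernel vectors of $A(\mu)$ and their complement, and application of (SAC1) to the $\le m-1$ vectors outside the kernel, with the threshold $\delta<\frac{1}{m-1}$ delivering the strict inequality. The only cosmetic difference is that you test against exactly $n+1$ kernel vectors and move the complement to the other side, whereas the paper computes $\tr(A(\mu))$ directly and notes that only the $N-d\le m-1$ non-kernel directions contribute; these are algebraically identical rearrangements.
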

\begin{proof}
First, we observe that for every $T \in \mathbb{G}(N,m)$,
\begin{equation}\label{trace}
\tr (B_\Psi(T)) = m\Psi(T),
\end{equation}
or, in other words, $\tr (T^\perp d\Psi(T) T) = 0$. This can be seen immediately by the properties of $T \in \mathbb{G}(N,m)$. Now, assume by contradiction that there exists a probability measure $\mu\in \mathcal M(\mathbb{G}(N,m))$ such that $\dim(\ker (A(\mu))) =: d>n$. Since $\Psi$ is positive, then $\tr (A(\mu))>0$  by \eqref{trace}, hence $d<N$. Let $\{v_i\}_{i=1}^N$ be an orthonormal basis of $\R^N$ such that $\{v_i\}_{i=1}^d\subset \ker (A(\mu))$. Then, we get the following contradiction
\begin{equation*}
\begin{split}
m \int_{\mathbb{G}(N,m)}\Psi(T)d\mu(T) &\overset{\eqref{trace}}{=}\tr(A(\mu))= \sum_{i= 1}^{N}(A(\mu)v_i,v_i) = \sum_{i= d+1}^{N}(A(\mu)v_i,v_i)
\\
&=\sum_{i= d + 1}^{N}\int_{\mathbb{G}(N,m)}(B_\Psi(T)v_i,v_i)d\mu(T)\overset{\eqref{condition}}{\leq} \sum_{i=d + 1}^{N}\int_{\mathbb{G}(N,m)}(1+\delta) \Psi(T) d\mu(T)\\
&= (N - d - 1) (1+\delta)\int_{\mathbb{G}(N,m)}\Psi(T)d\mu(T)< m \int_{\mathbb{G}(N,m)}\Psi(T)d\mu(T),
\end{split}
\end{equation*}
the last inequality being true due to the non-negativity of $\Psi$ and the estimate
\[
(N-d-1)(1 + \delta) < (N-n-1) \frac{m}{m -1} = m.
\]
\end{proof}

Now let us turn to (SAC).

\begin{prop}\label{PROOFAC2}
If $\Psi$ is a positive integrand satisfying (SAC), then $\Psi$ satisfies (AC).
\end{prop}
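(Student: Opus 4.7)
\emph{Plan.} The argument will center on the auxiliary Grassmannian map
\[
\Phi: \mathbb{G}(N,m) \to \mathbb{G}(N,n), \qquad \Phi(S) := \im B_{\Psi^*}(S^\perp).
\]
From the formula for $B_{\Psi^*}(S^\perp)$ in \eqref{BF*} one checks directly that $\Ker B_{\Psi^*}(S^\perp) = S$ (the $S^\perp$-component of $B_{\Psi^*}(S^\perp)v$ equals $\Psi(S) S^\perp v$, and $\Psi(S)>0$), so $B_{\Psi^*}(S^\perp)$ has rank $n$ and $\Phi$ is well defined and continuous. Moreover \eqref{zeroprod} gives $\im B_{\Psi^*}(S^\perp) \subseteq \Ker B_\Psi(S)^t$, and a dimension count yields the equality $\Phi(S) = \Ker B_\Psi(S)^t$.

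I would first show that (SAC) forces $\Phi$ to be injective. If $\Phi(S_1) = \Phi(S_2)$, then the columns of $B_{\Psi^*}(S_2^\perp)$ lie in $\Ker B_\Psi(S_1)^t$, hence $B_\Psi(S_1)^t B_{\Psi^*}(S_2^\perp) = 0$ as a matrix equality. Taking trace gives $\langle B_\Psi(S_1), B_{\Psi^*}(S_2^\perp)\rangle = 0$, and (SAC) forces $S_1 = S_2$. Since $\Phi$ is a continuous injection between smooth compact connected manifolds of the same dimension $mn$, invariance of domain makes $\Phi$ open; compactness of the source and connectedness of the target then upgrade this to surjectivity (so in fact $\Phi$ is a homeomorphism). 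This topological upgrade from injectivity to surjectivity is the main technical point of the argument.

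Finally, suppose towards contradiction that (AC) fails: there exists a probability measure $\mu \in \mathcal M(\G)$ with either $\dim \Ker A(\mu) > n$, or $\dim \Ker A(\mu) = n$ and $\mu$ not a Dirac mass. In either case $\dim \Ker A(\mu)^t \geq n$, so one can choose an $n$-plane $V \subseteq \Ker A(\mu)^t$, and by surjectivity of $\Phi$ there is $S \in \mathbb{G}(N,m)$ with $\im B_{\Psi^*}(S^\perp) = V$. Then $A(\mu)^t B_{\Psi^*}(S^\perp) = 0$ as a matrix, hence
\[
0 = \langle A(\mu), B_{\Psi^*}(S^\perp)\rangle = \int_{\G}\langle B_\Psi(T), B_{\Psi^*}(S^\perp)\rangle\,d\mu(T).
\]
By (SAC) the integrand is continuous, nonnegative, and vanishes only at $T = S$, forcing $\mu = \delta_S$. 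But then $A(\mu) = B_\Psi(S)$ has kernel $S^\perp$ of dimension exactly $n$, contradicting both alternatives and establishing (AC1) and (AC2) simultaneously.
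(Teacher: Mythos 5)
Your proposal is correct, but it takes a heavier route than the paper's own proof, and the contrast is worth recording. The paper never invokes surjectivity of any Grassmannian map: given $\dim\Ker A(\mu)\ge n$, it simply picks an orthonormal system $v_1,\dots,v_n$ inside $\Ker A(\mu)$, lets $P$ be the orthogonal projection onto their span, and notes that $\Ker B_{\Psi^*}(P)\supseteq P^\perp$ (the easy half of your kernel computation). Extending $v_1,\dots,v_n$ to an orthonormal basis of $\R^N$ and writing $\langle A(\mu),B_{\Psi^*}(P)\rangle=\sum_i (A(\mu)v_i,B_{\Psi^*}(P)v_i)$, each summand dies — either $A(\mu)v_i=0$ (for $i\le n$) or $B_{\Psi^*}(P)v_i=0$ (for $i>n$) — and (SAC) then forces $\mu=\delta_{P^\perp}$. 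In other words, the paper matches $\Ker B_{\Psi^*}(P)$ against the \emph{complement} of $\Ker A(\mu)$, which requires no search at all since $\Ker B_{\Psi^*}(P)$ is entirely under your control. You instead match $\im B_{\Psi^*}(S^\perp)$ against $\Ker A(\mu)^t$, and because the image is not directly prescribable you must prove the plane map $\Phi(S)=\im B_{\Psi^*}(S^\perp)$ is onto, which is where the topological machinery (invariance of domain, compactness, connectedness) enters. This is all correct — the injectivity-from-(SAC) step and the rank count $\Phi(S)=\Ker B_\Psi(S)^t$ both hold — and the homeomorphism $\Phi:\mathbb{G}(N,m)\to\mathbb{G}(N,n)$ is a nice structural fact in its own right, but for the proposition at hand it is avoidable: a one-line orthonormal-basis expansion replaces invariance of domain. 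One small stylistic point: the contradiction framing at the end is unnecessary; once you know $\langle A(\mu),B_{\Psi^*}(\,\cdot\,)\rangle=0$ and (SAC) forces $\mu$ to be a Dirac, both (AC1) and (AC2) follow directly, as the paper does by first showing $\delta_{T_0}$ automatically has $n$-dimensional kernel.
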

\begin{proof}
Suppose 
\begin{equation}\label{kergrande}
\dim(\Ker A(\mu)) \ge n
\end{equation}
for some probability measure $\mu\in \mathcal M(\mathbb{G}(N,m))$. We need to show that in fact $\mu = \delta_{T_0}$ for some $T_0 \in G(N,n)$. Once this is established, it follows that also $\dim(\Ker A(\mu)) = n$. Indeed, in the case $\mu = \delta_{T_0}$, we have
\[
0 = A(\mu)w = B_\Psi(T_0)w = \Psi(T_0)T_0w + T_0^\perp d\Psi(T_0)T_0w,
\]
and this can happen if and only if $\Psi(T_0)T_0w = 0$ and $T_0^\perp d\Psi(T_0)T_0w = 0$. The sign assumption on $\Psi$ therefore would yield
\[
\Ker(B_\Psi(T_0)) = \Ker(T_0),
\]
and hence that $B_\Psi(T_0)=A(\mu)$ has $n$-dimensional kernel.

We are left to show that \eqref{kergrande} implies $\mu = \delta_{T_0}$ for some $T_0 \in G(N,n)$. By \eqref{kergrande} we find an orthonormal system $v_1,\dots, v_n$ inside $\Ker A(\mu)$, and define $P \in \mathbb{G}(N,n)$ to be the orthogonal projection onto it. First, by \eqref{BF*}, we have $B_{\Psi^*}(P)w = 0$ for every $w \perp \spn\{v_1,\dots, v_n\}$. Therefore, since $v_i \in \Ker (A(\mu))$ for every $i=1,\dots, n$, we see that
\[
\int_{\mathbb{G}(N,m)}\langle B_\Psi(T),B_{\Psi^*}(P)\rangle d\mu = \langle A(\mu), B_{\Psi^*}(P)\rangle = \sum_{i = 1}^n (A(\mu)v_i,B_{\Psi^*}(P)v_i) = 0.
\]
However, by (SAC)
\[
0 = \int_{\mathbb{G}(N,m)}\langle B_\Psi(T),B_{\Psi^*}(P)\rangle d\mu \ge 0,
\]
with equality if and only if $\mu$ is concentrated on $T_0 = P^\perp$, and this finishes the proof.
\end{proof}

Let us comment on the necessity of the sign assumption on $\Psi$ in Proposition \ref{PROOFAC2}:

\begin{lemma}\label{positive} 
Let $\Psi \in C^1(\G)$ satisfy (AC). Then $\Psi$ is either nonnegative or nonpositive.
\end{lemma}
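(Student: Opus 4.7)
The plan is to argue by contradiction: suppose $\Psi$ takes both strictly positive and strictly negative values on $\G$. Since $\G$ is compact and connected and $\Psi$ is continuous, the intermediate value theorem furnishes some $T_0\in\G$ with $\Psi(T_0)=0$, and we aim to derive a violation of (AC) from this zero.

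First I would test (AC1) at the delta measure $\mu=\delta_{T_0}$. Using \eqref{eq:B} with $\Psi(T_0)=0$,
\[
A(\delta_{T_0}) \;=\; B_\Psi(T_0) \;=\; \Psi(T_0)\,T_0 + T_0^\perp d\Psi(T_0)\,T_0 \;=\; T_0^\perp d\Psi(T_0)\,T_0.
\]
Since this matrix factors through $T_0$ on the right and $T_0^\perp$ on the left, its image is contained in $\im T_0^\perp=\Ker T_0$ (dimension $n$), so $\rank B_\Psi(T_0)\le\min(m,n)$; at the same time the trivial inclusion $\Ker T_0\subseteq\Ker B_\Psi(T_0)$ gives $\dim\Ker B_\Psi(T_0)\ge n$. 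In the case $m>n$ this already suffices: $\rank B_\Psi(T_0)\le n<m$ forces $\dim\Ker B_\Psi(T_0)\ge N-n=m>n$, which directly contradicts (AC1).

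Second, when $m\le n$ the above saturates, and I would pass to the dual integrand $\Psi^*\in C^1(\mathbb G(N,n))$ defined by $\Psi^*(P):=\Psi(P^\perp)$. By \eqref{BF*} one has $B_{\Psi^*}(T_0^\perp)=-T_0\, d\Psi(T_0)\,T_0^\perp$, whose image now lies in $\im T_0$ (dimension $m$) and whose kernel contains $\im T_0=\Ker T_0^\perp$. Using the orthogonality \eqref{zeroprod} together with the canonical bijection $T\leftrightarrow T^\perp$ between probability measures on $\G$ and on $\mathbb G(N,n)$, one can argue that (AC) for $\Psi$ is equivalent to (AC) for $\Psi^*$ with $m$ and $n$ interchanged. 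Applying the Step-1 argument to $\Psi^*$ (which inherits a zero at $T_0^\perp$) then rules out the case $n>m$, leaving only $m=n$.

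The main obstacle is the borderline case $m=n$, where both the direct and the dual rank counts are tight. Here I would use (AC2) rather than (AC1). Since $\Psi$ changes sign on connected $\G$, its zero set must separate $\{\Psi>0\}$ from $\{\Psi<0\}$ and contains at least two distinct points $T_0,T_1$. For the non-delta measure $\mu_t=(1-t)\delta_{T_0}+t\delta_{T_1}$ ($t\in(0,1)$), the matrix $A(\mu_t)=(1-t)B_\Psi(T_0)+tB_\Psi(T_1)$ is a convex combination of two matrices each of rank $m$ with image in $\Ker T_i$ and kernel $\Ker T_i$. A continuity/dimension argument in $t$, exploiting that the two images lie in different $n$-planes and the shared-kernel $\Ker T_0\cap \Ker T_1$ contributes uniformly, should produce some $t^\ast\in(0,1)$ at which $\dim\Ker A(\mu_{t^\ast})=n$; since $\mu_{t^\ast}$ is not a delta, this contradicts (AC2). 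The technical core is identifying the correct pair $T_0,T_1$ (for instance choosing them at the boundary of the positivity region of $\Psi$) and verifying the dimension count --- this is the step I expect to be most delicate, along with carefully justifying the self-duality of (AC) under $\Psi\leftrightarrow\Psi^*$ used in Step 2.
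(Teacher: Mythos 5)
Your Step~1 ($m>n$) is a correct and clean observation: at a zero $T_0$ of $\Psi$, \[B_\Psi(T_0)=T_0^\perp d\Psi(T_0)T_0\] has image inside $\im T_0^\perp$ (dimension $n$), so $\rank B_\Psi(T_0)\le n<m$ forces $\dim\Ker B_\Psi(T_0)\ge m>n$, violating (AC1). The trouble is that this is the \emph{easy} case, and your Steps~2 and~3 (which cover the typical case $m\le n$) both have genuine gaps.

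For $m<n$ you invoke a ``self-duality of (AC) under $\Psi\leftrightarrow\Psi^*$'' without proof, and I do not believe it is true in the form you need. From \eqref{eq:B}--\eqref{BF*} one computes $B_{\Psi^*}(T^\perp)+B_\Psi(T)^t=\Psi(T)\,\id$, hence for the push-forward measure $\mu^*:=(\cdot^\perp)_\#\mu$ one gets
\[
A_{\Psi^*}(\mu^*)=\Bigl(\int\Psi\,d\mu\Bigr)\id - A_\Psi(\mu)^t .
\]
Thus $\Ker A_{\Psi^*}(\mu^*)$ is the $c$-eigenspace of $A_\Psi(\mu)^t$ with $c=\int\Psi\,d\mu$, which bears no general relation to $\Ker A_\Psi(\mu)$ unless $c=0$. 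So ``(AC) for $\Psi$'' and ``(AC) for $\Psi^*$ with $m,n$ swapped'' are not interchangeable, and the dual version of your Step~1 is not available. For $m=n$ your segment argument fails on the most basic example: take $m=n=1$, $T_0=e_1\otimes e_1$, $T_1=e_2\otimes e_2$. Then $\im T_0^\perp\cap\im T_1^\perp=\{0\}$, both $B_\Psi(T_i)$ must have rank one under (AC1), and $\Ker A(\mu_t)=\{0\}$ for \emph{every} $t\in(0,1)$; there is no $t^*$ at which the kernel jumps to dimension $n=1$, because kernel dimension is upper semicontinuous rather than intermediate-value-obeying.

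The paper sidesteps both obstacles with a structurally different idea: instead of staying in $\G$, it \emph{restricts} $\Psi$ to a sub-Grassmannian $\mathbb G(m+1,m)$ spanned by $\{v_2,\dots,v_m,w,v_1\}$, shows (via $B_\Phi(P)=M^tB_\Psi(\tilde P)M$) that the restriction $\Phi$ still satisfies (AC), and then works in codimension one. There, for $m>1$, a zero of $\Phi$ yields rank at most $1<m$ while (AC) demands rank $m$, an immediate contradiction; for $m=1$ it invokes the one-homogeneous extension $G$ and the convexity argument of \cite[Theorem 1.3]{DDG} plus \cite[Lemma 1]{SAP}. The crucial fact you are missing is precisely this invariance of (AC) under restriction, which is what reduces the general $(m,n)$ problem to the tame codimension-one case. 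Without that reduction (or a valid substitute), your proof does not go through for $m\le n$.
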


\begin{proof}
The key idea is to restrict $\Psi$ to the space of codimension one planes. Let $T_0 \in \G$ be arbitrary but fixed. Suppose $T_0$ is the projection on $\spn\{v_1,\dots, v_m\}$, for an orthonormal system $v_1,\dots, v_m$. We want to show that if $T_1 \in \G$ is the projection on $\spn\{w,v_2,\dots, v_m\}$, for some $w \in \{v_2,\dots, v_m\}^\perp$ then
\begin{equation}\label{signprod}
\Psi(T_0)\Psi(T_1) \ge 0.
\end{equation}
If we show this, then we readily conclude the Lemma by iterating this claim. Hence fix $w \in \{v_2,\dots, v_m\}^\perp$. Without loss of generality, we can assume that $\{e_1,\dots, e_{m+1}\}$ is an orthonormal system spanning $\{w,v_1,v_2,\dots, v_m\}$, where $\{e_1,\dots, e_{N}\}$ is the canonical orthonormal basis for $\R^N$. We want to define $\Phi \in C^1(\mathbb{G}(m+1,m))$ as a \emph{restriction} of $\Psi$. To this aim we consider any $m$-plane $\pi$ with
\[
\pi \subset \{e_1,\dots, e_{m+1}\} \sim \R^{m + 1},
\]
we denote $P\in \R^{(m + 1)\times (m + 1)}$ to be the projection onto $\pi$ and simply set $\tilde P= M P M^t \in \R^{N\times N}$, where $M=(e_1|\dots |e_{m+1})$. We define $$\Phi( P) := \Psi(\tilde P).$$ Now we show that $\Phi$ still fulfills (AC). 
To this aim, 
we simply observe that, by definition of $\Phi$, $B_\Phi(P)$ is the $\R^{(m + 1)\times (m + 1)}$ matrix obtained by
\[
B_\Phi(P)=\Phi(P)P + P^\perp d\Phi(P)P, \quad d\Phi(P) = D\Phi(P) +D\Phi(P)^t.
\]
By the chain rule we deduce that
\[
d\Phi(P)=M^td\Psi(\tilde P)M.
\]
This in turn implies $B_\Phi(P)=M^t B_\Psi(\tilde P)M$ and, since $\Psi$ satisfies (AC), we conclude that $\Phi$ fulfills (AC) as well.
If we prove that $\Phi$ has a sign, then \eqref{signprod} readily follows. 

In the case $m>1$, we have the following easy contradiction argument. Assume by contradiction that there exists $T_1\in \mathbb{G}(m + 1,m)$ such that $\Phi(T_1)=0$. 
By definition of $B_\Phi(T_1)$
\begin{equation}\label{intermm}
B_\Phi(T_1)=T_1^\perp d\Phi(T_1)T_1, \quad d\Phi(T_1) = D\Phi(T_1) +D\Phi(T_1)^t.
\end{equation}
Since $\Phi$ fulfills (AC), then $\dim(\Ker B_\Phi(T_1))=1$ and $\dim(\mbox{Im} B_\Phi(T_1))=m$. On the other hand, \eqref{intermm} implies that $\mbox{Im} B_\Phi(T_1)\subset \mbox{Im} T_1^\perp$ and consequently we get the following contradiciton
$$
m=\dim(\mbox{Im} B_\Phi(T_1))\leq \dim(\mbox{Im} T_1^\perp)=1.
$$

In the general case, including $m=1$, we can conclude with the following argument, which borrows ideas from \cite[Theorem 1.3]{DDG}.
We define \(G\) as the one-homogeneous extension to $\R^{m+1}$ of the following map $\mathbb S^{m} \ni \nu  \mapsto \Phi(\id - \nu\otimes \nu)\). Since $\Phi$ satisfies (AC), we deduce, as in Step 2 of the proof of \cite[Theorem 1.3]{DDG}, that for every \(\nu\neq \pm\bar \nu \in\mathbb S^{m}\):
\begin{equation*}
G(\nu)G(\bar \nu)-\langle d_\nu G(\bar \nu),\nu\rangle \langle d_\nu G( \nu),\bar \nu\rangle \ne 0.
\end{equation*}
Applying \cite[Lemma 1]{SAP} with $K=\mathbb G(m+1,m)$, we deduce that either
\begin{equation}\label{e:bella}
G(\nu)G(\bar \nu)-\langle d_\nu G(\bar \nu),\nu\rangle \langle d_\nu G( \nu),\bar \nu\rangle > 0\qquad\mbox{for all $\nu,\bar\nu$ s.t. \(\nu\ne \pm \bar\nu\),}
\end{equation}
or that \eqref{e:bella} holds with the opposite sign. Without loss of generality we will treat the positive case \eqref{e:bella}. If $G(\bar \nu)\neq 0$, we conclude that $G(\nu)>\langle d_\nu G(\bar \nu),\nu\rangle$  for every $\nu\neq  \pm \bar\nu \in \mathbb S^m$ as in \cite{DDG}. Since $\Phi$ satisfies (AC), it is easy to see that the zero set of $G$ cannot contain open subsets and, by continuity of $G$, we conclude that $G(\nu)\geq \langle d_\nu G(\bar \nu),\nu\rangle$ for every $\nu,\bar \nu\in \mathbb S^m$. In other words, $G$ is an even, convex and one-homogeneous function on $\R^{m + 1}$. We claim that this implies $G$ (and hence $\Phi$) is nonnegative. Indeed, assume by contradiction there exists $\nu\in  \mathbb S^m$ such that $G(\nu)<0$, then being $G$ even, $G(-\nu)<0$. Hence, by convexity of $G$, we deduce $G(0)<0$, which contradicts the one-homogeneity of $G$.
\end{proof}

\begin{remark}
Notwithstanding we focus on positive integrands, in view of Lemma \ref{positive}, it is clear that all the results of this Section hold for negative integrands as well, provided to change the sign in (SAC) and (USAC).
\end{remark}

Finally, we show that Definitions \ref{DEFAC1} and \ref{DEFAC2} are open conditions.

\begin{prop}\label{OPENAC1}
Let $\Psi\in C^1(\G,(0,\infty))$ satisfy (SAC1). Then, there exists $\eps = \eps(\delta,n,m, \min_{T} \Psi(T))>0$ such that if $\Psi': \G \to \R^+$ satisfies
\begin{equation}\label{closeC1}
\|\Psi - \Psi'\|_{C^1(\G)} \le \eps,
\end{equation}
then $\Psi'$ also satisfies (SAC1). Here $\delta>0$ is the quantity appearing in Definition \ref{DEFAC1} and depends only on $\Psi$.
\end{prop}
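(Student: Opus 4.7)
The plan is to exploit the explicit formula $B_\Psi(T)=\Psi(T)T+T^\perp d\Psi(T)T$ to show that $B_\Psi$ depends continuously on $\Psi$ in the $C^1$ topology (uniformly over $T\in\G$), and then to use the strict inequality $\delta<\tfrac{1}{m-1}$ in (SAC1) to absorb the resulting perturbation error.

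First, since $\G$ is compact and $\Psi$ is continuous and strictly positive, $\mu:=\min_{T\in\G}\Psi(T)>0$ is well defined. Moreover, the operators $T$ and $T^\perp=\Id-T$ are orthogonal projections, so their norms are uniformly bounded by a dimensional constant. Hence, if $\|\Psi-\Psi'\|_{C^1(\G)}\le\eps$, then from the identity
\[
B_{\Psi'}(T)-B_\Psi(T)=(\Psi'(T)-\Psi(T))T+T^\perp\bigl(d\Psi'(T)-d\Psi(T)\bigr)T,
\]
and the bound $|\Psi'-\Psi|\le\eps$, $\|D\Psi'-D\Psi\|\le\eps$ on $\G$, we get $\|B_{\Psi'}(T)-B_\Psi(T)\|\le C_1\eps$ for all $T\in\G$, with $C_1=C_1(n,m)$.

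Next, fix $w\in\R^N$ with $\|w\|=1$ and estimate, using (SAC1) for $\Psi$,
\[
\langle B_{\Psi'}(T)w,w\rangle\le \langle B_\Psi(T)w,w\rangle+C_1\eps\le(1+\delta)\Psi(T)+C_1\eps.
\]
Since $\Psi(T)\le\Psi'(T)+\eps$, this yields
\[
\langle B_{\Psi'}(T)w,w\rangle\le (1+\delta)\Psi'(T)+\bigl((1+\delta)+C_1\bigr)\eps.
\]
Choose an intermediate exponent $\delta':=\tfrac12\bigl(\delta+\tfrac{1}{m-1}\bigr)$, so $\delta<\delta'<\tfrac{1}{m-1}$. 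For $\eps\le\mu/2$ we have $\Psi'(T)\ge\mu/2$, and hence it suffices to impose
\[
\eps\le\frac{(\delta'-\delta)\mu}{2\bigl((1+\delta)+C_1\bigr)}=:\eps_0=\eps_0(\delta,n,m,\mu).
\]
With this choice, $\bigl((1+\delta)+C_1\bigr)\eps\le(\delta'-\delta)\Psi'(T)$, and by homogeneity in $w$ we conclude $\langle B_{\Psi'}(T)w,w\rangle\le(1+\delta')\Psi'(T)\|w\|^2$ for all $T\in\G$ and $w\in\R^N$, which is precisely (SAC1) for $\Psi'$ with parameter $\delta'<\tfrac{1}{m-1}$.

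No serious obstacle is expected: the argument is just the continuity of $T\mapsto B_\Psi(T)$ in the $C^1$ data of $\Psi$, combined with the strict slack in the inequality $\delta<\tfrac{1}{m-1}$. The only point requiring slight care is that the RHS of (SAC1) involves $\Psi(T)$ itself, so the perturbation must be compared with $\Psi'(T)$; this is handled via the lower bound $\mu>0$, which is where the dependence of $\eps$ on $\min_T\Psi(T)$ enters.
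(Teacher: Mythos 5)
Your proposal is correct and follows essentially the same line as the paper's proof: bound $\|B_{\Psi'}-B_\Psi\|_{C^0(\G)}$ by a dimensional constant times $\|\Psi-\Psi'\|_{C^1(\G)}$, rewrite $\Psi(T)$ in terms of $\Psi'(T)$, and absorb the resulting error into $\Psi'(T)$ using its uniform positive lower bound, exploiting the strict slack $\delta<\tfrac{1}{m-1}$. The only cosmetic difference is that you introduce an explicit intermediate exponent $\delta'$, whereas the paper simply requires the combined coefficient to remain below $\tfrac{1}{m-1}$.
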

\begin{proof}
As observed at the beginning of the section, the term $T^\perp d\Psi(T)T$ is the differential of $\Psi$ on $\G$. Thus, we see that
\begin{equation}\label{continuity}
\|B_\Psi - B_{\Psi'}\|_{C^0(\G)} \le c\|\Psi-\Psi'\|_{C^1(\G)}, 
\end{equation}
where $c>0$ is a dimensional constant. We assume $\eps$ is chosen so small that also $\Psi'$ fulfilling \eqref{closeC1} is strictly positive and actually we enforce
\begin{equation}\label{min'}
\min_{T \in \G}\Psi'(T) \ge \gamma := \frac{1}{2}\min_{T \in \G}\Psi(T)>0.
\end{equation}
The result now easily follows, in fact fix $\delta <\frac{1}{m-1}$ such that
\[
(B_\Psi(T)w,w) \le (1+\delta)\Psi(T), \quad \forall T \in \G, \, w \in \mathbb{S}^{N - 1}.
\]
Then, for each $T \in \G$ and $w \in \mathbb{S}^{N - 1}$
\begin{align*}
(B_{\Psi'}(T)w,w) &= ((B_{\Psi'}(T)-B_{\Psi}(T))w,w) + (B_\Psi(T)w,w) \overset{\eqref{continuity}}{\le} c\|\Psi-\Psi'\|_{C^1(\G)} + (1 + \delta)\Psi(T) \\
& =  c\|\Psi-\Psi'\|_{C^1(\G)} + (1 + \delta)(\Psi(T)-\Psi'(T)) + (1 + \delta)\Psi'(T)\\
& \overset{\eqref{min'}}{\le} \frac{(c + 1 + \delta)}{\gamma}\eps\Psi'(T) + (1 + \delta)\Psi'(T).
\end{align*}
Choosing $\eps>0$ sufficiently small, we can still enforce $\frac{c + 1 + \delta}{\gamma}\eps + \delta < \frac{1}{m - 1}$ and hence conclude the proof.
\end{proof}

\begin{remark}\label{remark:AC1}
As written in the introduction, Proposition \ref{OPENAC1} yields an interesting immediate corollary: in codimension one, the validity of (AC1) for $\Psi \in \mathbb{G}(m + 1,m)$ does not imply (in general) the convexity of the one-homogeneous extension to $\R^{m+1}$ of $\Psi^* \in \mathbb{G}(m+1,1)$. In fact, it is straightforward to see that arbitrarily small $C^1$ perturbations of convex functions need not to be convex. We recall that (AC1) implies the rectifiability of the mass of a varifold with locally bounded first variation,  \cite[Page 656, point (b)]{RDHR}. This remark shows that the result in \cite[Page 656, point (b)]{RDHR} is indeed optimal.
\end{remark}

\begin{prop}\label{OPENAC2}
Let $\Psi\in C^2(\G,(0,\infty))$ satisfy (USAC). Then, there exists $\eps = \eps(C, \|\Psi\|_{C^2},n,m)>0$ such that if $\Psi': \G \to \R^+$ satisfies
\[
\|\Psi - \Psi'\|_{C^2(\G)} \le \eps,
\]
then $\Psi'$ also satisfies (USAC). Here $C>0$ is the quantity appearing in Definition \ref{DEFAC2} and only depends on $\Psi$.
\end{prop}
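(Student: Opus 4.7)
The plan is to establish a quadratic estimate on the discrepancy
\[
E(T,S) := \Phi_{\Psi'}(T,S) - \Phi_\Psi(T,S), \qquad \Phi_\Psi(T,S) := \langle B_\Psi(T), B_{\Psi^*}(S^\perp)\rangle,
\]
of the form $|E(T,S)| \le c'\eps\,\|T-S\|^2$ for all $T,S\in\G$, from which (USAC) for $\Psi'$ follows immediately: combined with (USAC) for $\Psi$ one gets
\[
\Phi_{\Psi'}(T,S) \ge (C-c'\eps)\|T-S\|^2 \ge \tfrac{C}{2}\|T-S\|^2
\]
provided $\eps \le C/(2c')$. The quadratic bound rests on two observations, one algebraic and one of mixed-regularity type.

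\emph{Algebraic vanishing at the diagonal.} For any $C^2$ integrand $\Psi$ and any $V \in \Tan_S \G$, the identity
\[
\partial_T \Phi_\Psi(S,S)\cdot V = \langle \partial_T B_\Psi(S)\cdot V,\, B_{\Psi^*}(S^\perp)\rangle = 0
\]
holds. I would verify this by direct computation: expand $\partial_T B_\Psi(S)\cdot V$ from $B_\Psi(T) = \Psi(T) T + T^\perp d\Psi(T) T$, pair with $B_{\Psi^*}(S^\perp) = \Psi(S) S^\perp - S d\Psi(S) S^\perp$, and repeatedly use the tangency relation $SV+VS = V$ (equivalently $SV = VS^\perp$) that characterizes $V \in \Tan_S\G$, together with $SS^\perp = 0$, symmetry of $d\Psi$ and cyclicity of the trace; each of the five resulting terms is separately shown to vanish. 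The same identity applied to $\Psi'$ yields $E(S,S) = 0$ and $\partial_T E(S,S) = 0$ for all $S\in \G$.

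\emph{Decoupling and the mixed second derivative.} Since $T$ appears only in $B_\Psi(T)$ and $S$ only in $B_{\Psi^*}(S^\perp)$, the mixed partial derivative
\[
\partial_S\partial_T \Phi_\Psi(T,S)(V,W) = \langle \partial_T B_\Psi(T)\cdot V,\; \partial_S[B_{\Psi^*}(S^\perp)]\cdot W\rangle
\]
is well-defined and continuous on $\G\times\G$ whenever $B_\Psi$ and $B_{\Psi^*}$ are each $C^1$, i.e.\ whenever $\Psi \in C^2$. This is in sharp contrast with $\partial_T^2 \Phi_\Psi$ or $\partial_S^2 \Phi_\Psi$, which would require $\Psi \in C^3$, and it is precisely the structural reason why the second-order condition (USAC) is stable under $C^2$-perturbations. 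From the closeness hypothesis and bounds analogous to \eqref{continuity} at one higher derivative,
\[
\|\partial_S\partial_T E\|_{C^0(\G\times\G)} \le c(\|\Psi\|_{C^2},n,m)\cdot \eps.
\]

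\emph{Two successive integrations.} Given $T,S\in \G$ choose smooth curves $\sigma,\gamma:[0,1]\to \G$ with $\sigma(0)=T,\sigma(1)=S$ and $\gamma(0)=S,\gamma(1)=T$, each of length comparable to $\|T-S\|$ (e.g.\ minimizing geodesics in the compact manifold $\G$). Using $\partial_T E(T,T) = 0$ and integrating in $S$,
\[
|\partial_T E(T,S)| = \Big|\int_0^1 \partial_S\partial_T E(T,\sigma(u))\cdot \sigma'(u)\,du\Big| \le c\eps\|T-S\|.
\]
Using $E(S,S) = 0$ and integrating in $T$,
\[
|E(T,S)| = \Big|\int_0^1 \partial_T E(\gamma(t),S)\cdot \gamma'(t)\,dt\Big| \le c\eps\int_0^1 \|\gamma(t)-S\|\,\|\gamma'(t)\|\,dt \le c'\eps\|T-S\|^2,
\]
which closes the argument. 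The main obstacle is the verification of the algebraic identity in the first step (a delicate but elementary computation exploiting the tangency structure of $\G$); the mixed-regularity observation is elementary but conceptually crucial, since it is exactly what allows (USAC) — a second-order growth condition — to be propagated by perturbations that are only $C^2$-small rather than $C^3$-small.
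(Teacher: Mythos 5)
Your approach is correct and reaches the same core estimate as the paper --- $|\langle B_{\Psi'}(T), B_{(\Psi')^*}(S^\perp)\rangle - \langle B_{\Psi}(T), B_{\Psi^*}(S^\perp)\rangle| \le c\|\Psi-\Psi'\|_{C^2}\|T-S\|^2$ --- but by a genuinely different route. The paper proceeds entirely by algebraic decomposition: it writes out $\langle B_\Phi(T), B_{\Phi^*}(S^\perp)\rangle$ as a four-term sum (equation \eqref{00}), expands the $\Psi'$-minus-$\Psi$ difference term by term (equations \eqref{firstline}--\eqref{fourthline}), and bounds each piece separately using the identities $\langle T,S^\perp\rangle = \tfrac12\|T-S\|^2$, $\langle A_\Phi(T),T^\perp\rangle = 0$, the commutator identity $\langle T, A_\Phi(S)^t\rangle - \langle A_\Phi(T),S^\perp\rangle = -\langle A_\Phi(T)-A_\Phi(S),T-S\rangle$, and the factorization $T^\perp S = T^\perp(S-T)$. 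Your version replaces this with a calculus argument: diagonal vanishing of $E$ and $\partial_T E$, a uniform bound on the mixed second derivative $\partial_S\partial_T E$, and a double integration along geodesics. The structural remark you make --- that the factored form $\langle B_\Psi(T),B_{\Psi^*}(S^\perp)\rangle$ gives a \emph{mixed} second derivative existing and controlling the perturbation already for $\Psi\in C^2$, whereas the unmixed ones would need $C^3$ --- is the real conceptual payoff of your reorganization and is not stated in the paper, which buries this fact inside the term-by-term accounting.

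Two caveats. First, the claim $\partial_T\Phi_{\Phi}(S,S)=0$, which you quite rightly identify as the main obstacle, is true but your proposed verification is slightly off: if you expand $\partial_T B_\Phi(S)\cdot V$ into its five terms and pair each with $B_{\Phi^*}(S^\perp)$, you will find that the terms do \emph{not} all vanish individually --- for instance $\langle \Phi(S)V, B_{\Phi^*}(S^\perp)\rangle = -\Phi(S)\tr(VSd\Phi(S))$, which is nonzero in general, and cancels only against the term $\langle S^\perp d\Phi(S)V, B_{\Phi^*}(S^\perp)\rangle = +\Phi(S)\tr(VSd\Phi(S))$. So the verification requires tracking pair cancellations, not individual vanishings; this is also visible in the paper, whose decomposition \eqref{22} groups exactly the terms that balance. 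Second, the diagonal vanishing of $\partial_T\Phi_\Phi$ and the mixed-derivative estimate are asserted but not carried out. Those calculations are of the same order of difficulty as the algebra the paper does: in particular, proving $\partial_T\Phi_\Phi(S,S)=0$ for a \emph{general} $C^2$ integrand $\Phi$ is, if anything, a stronger intermediate claim than the paper needs. So you have a valid, somewhat more elegant framing, at the cost of a harder-to-state lemma whose direct verification would essentially re-derive the paper's tangency identities.
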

\begin{proof}
For any $\Phi \in C^2(\G)$, let us denote with $A_\Phi(T) := T^\perp d\Phi(T) T$ and with $\Delta(T) := \Psi'(T)-\Psi(T)$. We have
\begin{equation}\label{00}
\begin{split}
\langle B_\Phi(T), B_{\Phi^*}(S^\perp)\rangle &= \langle \Phi(T)T + T^\perp d\Phi(T)T, \Phi(S)S^\perp - Sd\Phi(S)S^\perp\rangle\\
&= \langle \Phi(T)T + A_\Phi(T), \Phi(S)S^\perp - A_\Phi(S)^t\rangle\\
&=\Phi(T)\Phi(S)\langle T,S^\perp\rangle + \Phi(S)\langle A_\Phi(T),S^\perp\rangle\\
&\qquad-\Phi(T)\langle T, A_\Phi(S)^t\rangle - \langle A_\Phi(T), A_\Phi(S)^t\rangle.
\end{split}
\end{equation}
We rewrite separately these terms, with the aim of comparing
\[
\langle B_\Psi(T), B_{\Psi^*}(S^\perp)\rangle \text{ and } \langle B_{\Psi'}(T), B_{(\Psi')^*}(S^\perp)\rangle.
\]
Firstly, we have:
\begin{equation}\label{11}
\Psi'(S)\Psi'(T)\langle T,S^\perp\rangle = \Psi(T)\Psi(S)\langle T,S^\perp\rangle+ \Delta(T)\Psi(S)\langle T,S^\perp\rangle + \Psi'(T)\Delta(S)\langle T,S^\perp\rangle.
\end{equation}
Secondly, we write
\begin{equation}\label{22}
\begin{split}
 \Psi'(S)\langle &A_{\Psi'}(T),S^\perp\rangle -\Psi'(T)\langle T, A_{\Psi'}(S)^t\rangle \\
&= (\Psi'(S)-\Psi'(T))\langle A_{\Psi'}(T),S^\perp\rangle -\Psi'(T)(\langle T, A_{\Psi'}(S)^t\rangle-\langle A_{\Psi'}(T),S^\perp\rangle)\\
&= (\Delta(S)-\Delta(T))\langle A_{\Psi'}(T),S^\perp\rangle +  (\Psi(S)-\Psi(T))\langle A_{\Psi'}(T),S^\perp\rangle \\
&\qquad-\Delta(T)(\langle T, A_{\Psi'}(S)^t\rangle-\langle A_{\Psi'}(T),S^\perp\rangle)-\Psi(T)(\langle T, A_{\Psi'}(S)^t\rangle-\langle A_{\Psi'}(T),S^\perp\rangle)\\
&= (\Delta(S)-\Delta(T))\langle A_{\Psi'}(T),S^\perp\rangle  +(\Psi(S)-\Psi(T))\langle A_{\Psi'-\Psi}(T),S^\perp\rangle \\
&\qquad+ (\Psi(S)-\Psi(T))\langle A_{\Psi}(T),S^\perp\rangle -\Delta(T)(\langle T, A_{\Psi'}(S)^t\rangle-\langle A_{\Psi'}(T),S^\perp\rangle)\\
&\qquad-\Psi(T)(\langle T, A_{\Psi'-\Psi}(S)^t\rangle-\langle A_{\Psi'-\Psi}(T),S^\perp\rangle)-\Psi(T)(\langle T, A_{\Psi}(S)^t\rangle-\langle A_{\Psi}(T),S^\perp\rangle)\\
&= \Psi(S)\langle A_{\Psi}(T),S^\perp\rangle -\Psi(T)\langle T, A_{\Psi}(S)^t\rangle\\
& \qquad + (\Delta(S)-\Delta(T))\langle A_{\Psi'}(T),S^\perp\rangle +(\Psi(S)-\Psi(T))\langle A_{\Psi'-\Psi}(T),S^\perp\rangle \\
&\qquad-\Delta(T)(\langle T, A_{\Psi'}(S)^t\rangle-\langle A_{\Psi'}(T),S^\perp\rangle)-\Psi(T)(\langle T, A_{\Psi'-\Psi}(S)^t\rangle-\langle A_{\Psi'-\Psi}(T),S^\perp\rangle).
\end{split}
\end{equation}
Thirdly, and finally, we rewrite
\begin{equation}\label{33}
 \langle A_{\Psi'}(T), A_{\Psi'}(S)^t\rangle =\langle A_{\Psi}(T), A_{\Psi}(S)^t\rangle+  \langle A_{\Psi'-\Psi}(T), A_{\Psi'}(S)^t\rangle +\langle A_{\Psi}(T), A_{\Psi'-\Psi}(S)^t\rangle.
\end{equation}
Combining \eqref{00}-\eqref{11}-\eqref{22}-\eqref{33}, we find that
\begin{align}
\langle B_{\Psi'}(T),& B_{(\Psi')^*}(S^\perp)\rangle - \langle B_{\Psi}(T), B_{\Psi^*}(S^\perp)\rangle\label{44} \\
\label{firstline} & =\Delta(T)\Psi(S)\langle T,S^\perp\rangle + \Psi'(T)\Delta(S)\langle T,S^\perp\rangle\\
& + (\Delta(S)-\Delta(T))\langle A_{\Psi'}(T),S^\perp\rangle +(\Psi(S)-\Psi(T))\langle A_{\Psi'-\Psi}(T),S^\perp\rangle\label{secondline} \\
&-\Delta(T)(\langle T, A_{\Psi'}(S)^t\rangle-\langle A_{\Psi'}(T),S^\perp\rangle)-\Psi(T)(\langle T, A_{\Psi'-\Psi}(S)^t\rangle-\langle A_{\Psi'-\Psi}(T),S^\perp\rangle)\label{thirdline}\\
& -\langle A_{\Psi'-\Psi}(T), A_{\Psi'}(S)^t\rangle -\langle A_{\Psi}(T), A_{\Psi'-\Psi}(S)^t\rangle\label{fourthline}.
\end{align}
In order to conclude the proof, we only need to show that this right hand side is bounded (in absolute value) by
\begin{equation}\label{secorder}
c\|\Psi-\Psi'\|_{C^2}\|T-S\|^2,
\end{equation}
for a constant $c = c(n,m, \|\Psi\|_{C^2})$. Once this is done, we can choose $\eps$ sufficiently small to ensure that also $\Psi'$ fulfills (USAC). We will assume without loss of generality that $\|\Psi-\Psi'\|_{C^2} \le 1$. We show separately that a bound of the form \eqref{secorder} holds for \eqref{firstline}-\eqref{secondline}-\eqref{thirdline}-\eqref{fourthline}.
\\
\\
\indent\fbox{Estimate of \eqref{firstline}:}
\\
\\
Clearly,
\[
|\Delta(T)\Psi(S)\langle T,S^\perp\rangle + \Psi'(T)\Delta(S)\langle T,S^\perp\rangle| \le (|\Delta(T)|+|\Delta(S)|)(\|\Psi\|_\infty + \|\Psi'\|_\infty)|\langle T,S^\perp\rangle|.
\]
Moreover,
\begin{equation}\label{idTS}
\langle T,S^\perp\rangle = \langle T, \id\rangle - \langle T,S\rangle = m - \langle T,S\rangle =\frac{1}{2}\|T-S\|^2.
\end{equation}
Since clearly $|\Delta(T)| \le \|\Psi-\Psi'\|_{C^2}$ and we assumed that $\|\Psi-\Psi'\|_{C^2} \le 1$, we find that for the term \eqref{firstline} an estimate as \eqref{secorder} holds. 
\\
\\
\indent\fbox{Estimate of \eqref{secondline}:}
\\
\\
Here, the estimate \eqref{secorder} follows in a similar fashion for all the addenda. Indeed, it follows from the estimates
\begin{align*}
&|\Delta(S)-\Delta(T)| \le c'(n,m) \|\Psi-\Psi'\|_{C^2}\|T-S\|,\\
&\|A_{\Psi'-\Psi}(T)\| \le \|\Psi-\Psi'\|_{C^2},\\
&|\Psi(S)-\Psi(T)| \le \|\Psi\|_{C^2}\|T-S\|,
\end{align*}
and the crucial observation that for any $\Phi \in C^2(\G)$, we have
\begin{equation}\label{zerest}
|\langle A_\Phi(T),S^\perp\rangle| \le \|\Phi\|_{C^2}\|T-S\|.
\end{equation}
The latter holds true since
\begin{equation}\label{zero}
\langle A_\Phi(T),S^\perp\rangle = \langle A_\Phi(T),S^\perp-T^\perp\rangle,
\end{equation}
as can be seen by the simple computation:
\[
\langle A_\Phi(T),T^\perp\rangle = \langle T^\perp d\Phi(T)T,T^\perp\rangle = \langle T^\perp d\Phi(T)TT^\perp,\id\rangle =0,
\]
since $TT^\perp = 0, \forall T \in \G$. Therefore, for \eqref{secondline}, an estimate like \eqref{secorder} is true. 
\\
\\
\indent\fbox{Estimate of \eqref{thirdline}:}
\\
\\
All of the terms in \eqref{thirdline} contain a \emph{commutator} of the form
\[
\langle T, A_{\Phi}(S)^t\rangle-\langle A_{\Phi}(T),S^\perp\rangle.
\]
If we see that for the latter it holds
\begin{equation}\label{secordergen}
|\langle T, A_{\Phi}(S)^t\rangle-\langle A_{\Phi}(T),S^\perp\rangle| \le c'\|\Phi\|_{C^2}\|T-S\|^2,
\end{equation}
for some constant $c' = c'(n,m)$, then we see easily that for the terms in \eqref{thirdline} an estimate of the form \eqref{secorder} is fulfilled.  To see that \eqref{secordergen} is true, write
\begin{equation}\label{comm}
\begin{split}
\langle T, A_{\Phi}(S)^t\rangle-\langle A_{\Phi}(T),S^\perp\rangle &= \langle T, A_{\Phi}(S)\rangle-\langle A_{\Phi}(T),S^\perp\rangle = \langle T-S, A_{\Phi}(S)\rangle-\langle A_{\Phi}(T),S^\perp-T^{\perp}\rangle\\
&= \langle T-S, A_{\Phi}(S)\rangle+\langle A_{\Phi}(T),S-T\rangle = -\langle A_{\Phi}(T) - A_{\Phi}(S),T-S\rangle,
\end{split}
\end{equation}
the second equality being true by \eqref{zero} and the analogous identity $\langle A_\Phi(S),S\rangle=0$, for every  $S \in \G$. Now the previous inequality shows that \eqref{secordergen} holds and hence also the proof of this estimate is finished.
\\
\\
\indent\fbox{Estimate of \eqref{fourthline}:}
\\
\\
The two addenda in \eqref{fourthline} are estimated in a similar fashion, so let us consider only the first. Using the properties of the projection matrices $T,S,T^\perp, S^\perp$:
\begin{align*}
\langle &A_{\Psi'-\Psi}(T), A_{\Psi'}(S)^t\rangle = \langle T^\perp (d\Psi'-d\Psi)(T)T,Sd\Psi'(S)S^\perp\rangle = \langle T^\perp T^\perp (d\Psi'-d\Psi)(T)TT,SSd\Psi'(S)S^\perp S^\perp\rangle \\
&= \langle T^\perp (d\Psi'-d\Psi)(T)T,(T^\perp S)Sd\Psi'(S)S^\perp (S^\perp T)\rangle = \langle A_{\Psi'-\Psi}(T), (T^\perp S)A_{\Psi'}(S)^t(S^\perp T)\rangle.
\end{align*}
Since
\[
T^\perp S = T^{\perp}(S - T) \text{ and } S^\perp T = S^\perp (T-S),
\]
we readily see that
\[
|\langle A_{\Psi'-\Psi}(T), A_{\Psi'}(S)^t\rangle| \le c\|\Psi-\Psi'\|_{C^2}\|T-S\|^2,
\]
for some $c = c(n,m,\|\Psi\|_{C^2})>0$, and therefore also \eqref{fourthline} fulfills an estimate as in \eqref{secorder}.
\\
\\
From these computations, we deduce that also \eqref{44} satisfies an estimate as in \eqref{secorder}, i.e. 
\[
|\langle B_{\Psi'}(T), B_{(\Psi')^*}(S^\perp)\rangle - \langle B_{\Psi}(T), B_{\Psi^*}(S^\perp)\rangle| \le c\|\Psi-\Psi'\|_{C^2}\|T-S\|^2,
\]
and hence we conclude the proof of the Proposition.
\end{proof}

\begin{remark}
The reader may wonder whether the use of the $C^2$ norm in Proposition \ref{OPENAC2} can be relaxed to the $C^1$ norm. This is not merely a technical difficulty. Indeed, in the codimension one case it is known by the aforementioned \cite[Theorem 1.3]{DDG} that $\Psi \in \mathbb{G}(m + 1,m)$ satisfies (AC) if and only if $\Psi^* \in \mathbb{G}(m+1,1)$ is a strictly convex function, once we consider its one-homogeneous extension to $\R^{m + 1}$. As in Remark \ref{remark:AC1}, we observe that a $C^1$ perturbation of a strictly convex function is not, in general, a strictly convex function, hence (AC) cannot have open interior with respect to the $C^1(\G)$-topology on integrands.
\end{remark}

\begin{Cor}
For any $m,n > 0$, (AC1) holds in a $C^1$ neighborhood of the area functional and (AC) holds in a $C^2$ neighborhood of the area functional on $\G$.
\end{Cor}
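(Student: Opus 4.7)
The plan is to verify that the area functional itself satisfies both (SAC1) and (USAC), and then invoke the propositions already established in this section. Concretely, let $\Psi_0 \equiv 1$ on $\G$ be the area integrand. Since $D\Psi_0 \equiv 0$, the formula $B_\Psi(T) = \Psi(T) T + T^\perp d\Psi(T) T$ reduces to $B_{\Psi_0}(T) = T$, and analogously $B_{\Psi_0^*}(S^\perp) = S^\perp$.

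For (SAC1), for every $T \in \G$ and $w \in \R^N$ we have $\langle B_{\Psi_0}(T) w, w \rangle = \langle T w, w \rangle = \|Tw\|^2 \le \|w\|^2$, since $T$ is an orthogonal projection. Thus (SAC1) holds for $\Psi_0$ with $\delta = 0 < \frac{1}{m-1}$. For (USAC), using the identity \eqref{idTS} one has
\[
\langle B_{\Psi_0}(T), B_{\Psi_0^*}(S^\perp)\rangle = \langle T, S^\perp \rangle = \tfrac{1}{2}\|T-S\|^2,
\]
so (USAC) holds for $\Psi_0$ with constant $C = \tfrac{1}{2}$.

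Having verified these two conditions at the area functional, the statement follows immediately by combining the openness and implication results of this section. Proposition \ref{OPENAC1} provides a $C^1$-neighborhood $\mathcal{U}_1$ of $\Psi_0$ in which (SAC1) continues to hold, and Proposition \ref{PROOFAC1} then yields (AC1) on $\mathcal{U}_1$. Similarly, Proposition \ref{OPENAC2} gives a $C^2$-neighborhood $\mathcal{U}_2$ of $\Psi_0$ in which (USAC) persists, and Proposition \ref{PROOFAC2} upgrades this to (AC) on $\mathcal{U}_2$.

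There is no real obstacle here; all the substantive work has been carried out in Propositions \ref{PROOFAC1}, \ref{PROOFAC2}, \ref{OPENAC1}, and \ref{OPENAC2}. The only point worth emphasizing is that the constant $\delta = 0$ for the area ensures some room in the strict inequality $\delta < \frac{1}{m-1}$, which is what allows the $C^1$-openness to give a genuine neighborhood regardless of $m$; analogously, the explicit lower bound $C = \tfrac{1}{2}$ obtained from \eqref{idTS} guarantees a quantitative $C^2$-neighborhood in which (USAC), and hence (AC), is preserved.
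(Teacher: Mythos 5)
Your proof is correct and follows essentially the same approach as the paper: verify that the area integrand $\Psi\equiv 1$ satisfies (SAC1) with $\delta=0$ and (USAC) with $C=\tfrac12$ via \eqref{idTS}, then invoke Propositions \ref{PROOFAC1}, \ref{OPENAC1}, \ref{PROOFAC2}, \ref{OPENAC2}. The only detail the paper makes explicit that you leave implicit is that a sufficiently small $C^1$ (resp.\ $C^2$) perturbation of the area remains strictly positive, which is needed to apply Propositions \ref{PROOFAC1} and \ref{PROOFAC2}; this is immediate since the unperturbed integrand is identically $1$.
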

\begin{proof}
Let us denote the area functional as $\Psi \equiv 1$. Therefore,
\[
B_\Psi(T) = T.
\]
$\Psi$ satisfies (SAC1), since
\[
(Tw,w) \le 1, \qquad \forall w \in \mathbb{S}^{m - 1}.
\]
Moreover,
\[
\langle B_\Psi(T),B_{\Psi*}(S^\perp)\rangle = \langle T,S^\perp\rangle \overset{\eqref{idTS}}{=} \frac{1}{2}\|T-S\|^2,
\]
and hence the area functional fulfills (USAC) with constant $C = \frac{1}{2}$. Now Propositions \ref{PROOFAC1} \& \ref{OPENAC1} and \ref{PROOFAC2} \& \ref{OPENAC2} conclude the proof, once we observe that if we take the $C^1$ (or $C^2$) norm sufficiently small, also the perturbed functional will be positive.
\end{proof}

In the next section we will need the following observation, namely that (USAC) implies quasiconvexity:
\begin{prop}
Let $\Psi\in C^2(\G,(0,\infty))$ be a functional that satisfies (USAC) with constant $C > 0$ and $F_\Psi$ as in \eqref{functional}.
Then $F_\Psi$ is quasiconvex in the following sense. There exists $\alpha(n,m,C,\min \Psi)>0$ such that, for every  open, bounded $\Omega\subset \R^m$ and for every $A\in \R^{n\times m}$ it holds
\begin{equation}\label{quasiconvex}
\int_\Omega F_\Psi(A+D\varphi)-F_\Psi(A)dx \geq \alpha \int_\Omega \A(A+D\varphi)-\A(A)dx, \qquad \forall \varphi \in C^1_c(\Omega, \R^n).
\end{equation}
In particular $F_\Psi$ satisfies the following local uniform Legendre-Hadamard condition, namely
\begin{equation}\label{rank1}
D^2F_\Psi(X)[M,M]\geq c\|M\|^2, \qquad \mbox{for every $X\in B_R$ and $M\in \R^{n\times m}$ with $\rank(M)=1$}.
\end{equation}
\end{prop}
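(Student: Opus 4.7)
The plan is to recognize that the desired estimate \eqref{quasiconvex} is equivalent to the ordinary quasiconvexity of the shifted integrand $F_{\tilde\Psi}$, with $\tilde\Psi := \Psi - \alpha$. Indeed, a direct computation gives $F_{\tilde\Psi}(X) = [\Psi(h(X)) - \alpha]\A(X) = F_\Psi(X) - \alpha\A(X)$, so that \eqref{quasiconvex} rearranges to
$$\int_\Omega F_{\tilde\Psi}(A + D\varphi) - F_{\tilde\Psi}(A)\,dx \geq 0, \qquad \forall \varphi \in C^1_c(\Omega, \R^n).$$
Hence the task becomes producing $\alpha > 0$ such that $F_{\tilde\Psi}$ is a quasiconvex integrand.

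To establish this, I would chain the results of this section with the implication (AC)$\Rightarrow$(AE) of~\cite{DRK}. Since $\|\Psi - \tilde\Psi\|_{C^2(\G)} = \alpha$, Proposition~\ref{OPENAC2} supplies a threshold $\alpha_0 = \alpha_0(n, m, C, \|\Psi\|_{C^2}) > 0$ such that for every $\alpha \in (0, \alpha_0)$, $\tilde\Psi$ still satisfies (USAC); enforcing additionally $\alpha < \min \Psi$ keeps $\tilde\Psi$ positive. Proposition~\ref{PROOFAC2} then gives (AC) for $\tilde\Psi$, and \cite{DRK} promotes this to Almgren ellipticity. In particular, the piece of affine $m$-plane $\Gamma_{Ax}$ is a $\Sigma_{\tilde\Psi}$-minimizer against any rectifiable competitor with the same boundary, including the Lipschitz graph $\Gamma_{Ax + \varphi}$ (which coincides with $\Gamma_{Ax}$ outside $\text{supp}\,\varphi$). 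Translating through $\Sigma_{\tilde\Psi}(\llbracket \Gamma_u\rrbracket) = \int_\Omega F_{\tilde\Psi}(Du)\,dx$ yields the displayed inequality.

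For the Legendre-Hadamard condition \eqref{rank1}, I would run the classical Morrey rank-one oscillation argument on the quasiconvexity of $F_{\tilde\Psi}$ just established: testing with $\varphi_\eps(x) = \eps\, a\,\rho(b \cdot x/\eps)\eta(x)$ for $M = a\otimes b$ rank-one, $\rho$ a periodic sawtooth and $\eta$ a cutoff, and letting $\eps \to 0$, one obtains $D^2 F_{\tilde\Psi}(X)[M, M] \geq 0$ for rank-one $M$ and arbitrary $X$. Because $F_\Psi = F_{\tilde\Psi} + \alpha \A$ and the area integrand $\A$ is smooth and strictly rank-one convex on $B_R$, with $D^2 \A(X)[M,M] \geq c(R)\|M\|^2$ for every rank-one $M$ and every $X \in B_R$, the bound \eqref{rank1} follows with constant $c = \alpha c(R)$.

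The main obstacle I anticipate is verifying that Lipschitz graphs are admissible competitors in the (AE) framework of~\cite{DRK}: the ellipticity there is stated for integer rectifiable currents/varifolds, and one needs to confirm that $\Gamma_{Ax + \varphi}$ (seen as an integer rectifiable multiplicity-one $m$-varifold) lies in the admissible class with the correct boundary condition. A secondary issue is that my route produces $\alpha$ depending also on $\|\Psi\|_{C^2}$, whereas the claim lists only $n,m,C,\min\Psi$; this may reflect a minor imprecision, or suggest a refinement exploiting that the perturbation $\tilde\Psi - \Psi \equiv -\alpha$ has zero differential, which kills many of the $\|\Psi\|_{C^2}$-dependent terms in the expansion \eqref{00}-\eqref{fourthline} used to prove Proposition~\ref{OPENAC2}.
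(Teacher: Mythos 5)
Your proof is correct and is essentially the paper's own argument: subtract $\alpha$, apply Propositions~\ref{OPENAC2} and~\ref{PROOFAC2} and \cite{DRK} to obtain Almgren ellipticity for $\Psi-\alpha$, read off quasiconvexity of $F_{\Psi-\alpha}$, and add the uniform Legendre--Hadamard bound for $\alpha\A$ to the rank-one convexity of $F_{\Psi-\alpha}$ (the paper cites \cite[Lemma~4.3]{DMU} where you run Morrey's oscillation argument explicitly, which is the same step). Both obstacles you flag are non-issues in the end: compactly supported Lipschitz graph competitors are standard in the Almgren ellipticity framework, and your suggested resolution of the $\|\Psi\|_{C^2}$-dependence --- that the constant perturbation has vanishing differential, annihilating all $A_{\Psi'-\Psi}$ and $d\Delta$ terms in the expansion \eqref{00}--\eqref{fourthline} --- is indeed the right explanation, a point the paper leaves implicit when it states $\alpha(n,m,C,\min\Psi)$.
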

\begin{proof}
By Proposition \ref{OPENAC2}, there exists $\alpha(n,m,C,\min \Psi)>0$ small enough such that $\Psi':=\Psi-\alpha$ also satisfies (USAC) and is positive. In particular, by Proposition \ref{PROOFAC2}, we deduce that $\Psi'$ satisfies (AC) and, by \cite[Theorem A, Theorem 8.8]{DRK}, $\Psi'$ satisfies the strict Almgren ellipticity condition, \cite[Definition 4.5]{DRK}. In particular we conclude that 
$$
\int_{\Gamma_{A+\varphi}}\Psi'(T_y \Gamma_{A+\varphi}) d\mathcal{H}^m-\int_{ \Gamma_{A}}\Psi'(T_y\Gamma_{A}) d\mathcal{H}^m>0,
$$
which in turn reads
$$
\int_{\Gamma_{A+\varphi}}\Psi(T_y \Gamma_{A+\varphi}) d\mathcal{H}^m-\int_{ \Gamma_{A}}\Psi(T_y\Gamma_{A}) d\mathcal{H}^m> \alpha(\mathcal{H}^m(\Gamma_{A+\varphi})-\mathcal{H}^m( \Gamma_{A})).
$$
Hence, by the area formula and \eqref{functional}, we conclude \eqref{quasiconvex}.
Moreover, by \cite[Lemma 4.3]{DMU} applied to $F_\Psi-\alpha \A$, we deduce that $F_\Psi-\alpha \A$ is rank-one convex, i.e. $D^2(F_\Psi-\alpha\A)(X)[M,M]\geq 0$ for every $X\in \R^{n\times m}$ and $M\in \R^{n\times m}$ with $\rank(M)=1$. 
One can check that $\A$ satisfies the local uniform Legendre-Hadamard condition, see \cite[Lemma 6.5]{TR}.  \cite[Lemma 6.5]{TR} is proved for $m=2$, but it readily extends to every $m$. Then we deduce the validity of \eqref{rank1}.
\end{proof}

\section{Regularity}\label{s:regularity}

In this section, we want to prove the following:

\begin{Teo}\label{regteo}
Let $\Psi\in C^2(\G,(0,\infty))$ be a functional satisfying (USAC), let $p>m$ and consider an open, bounded set $\Omega\subset \R^m$. Let $u \in \Lip(\Omega,\R^n)$ be a map whose graph $\Gamma_u$ induces a varifold with $\Psi$-mean curvature $H$ in $L^p$ in $\Omega \times \R^n$. Then there exists $\alpha>0$ and an open set $\Omega_0$ of full measure in $\Omega$ such that
\[
u \in C^{1,\alpha}(\Omega_0,\R^n).
\]
\end{Teo}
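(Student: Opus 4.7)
The plan is to convert the $\Psi$-mean curvature identity into a divergence-form Euler--Lagrange equation for $u$, and then apply the partial regularity theory for strongly quasiconvex integrands with $L^p$-forcing. Testing $[\delta_\Psi V](g)=-\int(H,g)\,d\|V\|$ with vertical deformations $g(x,y)=(0,\varphi(x))$ and pulling back to $\Omega$ through the graph parameterization, a standard computation using the area formula and \eqref{functional} gives
\[
\int_\Omega \langle DF_\Psi(Du),D\varphi\rangle\,dx = \int_\Omega f\cdot\varphi\,dx,\qquad \forall\varphi\in C^1_c(\Omega,\R^n),
\]
for some $f\in L^p(\Omega,\R^n)$ whose norm is controlled by $\|H\|_{L^p(\Gamma_u)}$ and by $L:=\Lip(u)$.

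The crux of the argument is a Caccioppoli inequality: for every affine map $x\mapsto Ax+b$ with $\|A\|\le L$ and every ball $B_r(x_0)\Subset\Omega$,
\[
\int_{B_{r/2}(x_0)}|Du-A|^2\,dx\le \frac{C}{r^2}\int_{B_r(x_0)}|u(x)-Ax-b|^2\,dx + C r^{m+2-2m/p}\|f\|_{L^p(B_r)}^2.
\]
I would obtain this via the Fusco--Hutchinson cut-off trick plugged into \eqref{quasiconvex}: the lower bound $\alpha\int[\A(A+D\varphi)-\A(A)]\,dx$ controls $\int|D\varphi|^2\,dx$ on bounded sets of gradients by G{\aa}rding's inequality applied to the Legendre--Hadamard elliptic Hessian $D^2\A(A)$, while the forcing term on the right-hand side is absorbed by H\"older and Poincar\'e, using the hypothesis $p>m$.

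With the Caccioppoli in hand, I would run an $\A$-harmonic approximation scheme: the constant-coefficient operator $\mathrm{div}(D^2F_\Psi(A)\cdot)$ is Legendre--Hadamard elliptic by \eqref{rank1}, so approximating $u$ on each sufficiently small ball by a weak solution of the linearization yields the excess improvement
\[
E(x_0,\theta r)\le C\theta^2 E(x_0,r)+ C r^{2(1-m/p)}\|f\|_{L^p(B_r)}^2,\qquad E(x_0,r):=\frac{1}{r^m}\int_{B_r(x_0)}|Du-(Du)_{x_0,r}|^2\,dx,
\]
at every point whose excess is below a fixed universal threshold. Iterating in $r$ for suitable $\theta$ and invoking Campanato's characterization of H\"older spaces produces $Du\in C^{0,\alpha}$ with $\alpha=1-m/p$ in an open neighborhood of each such point; since $Du\in L^\infty(\Omega)$, Lebesgue's differentiation theorem implies that this open set has full measure.

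The main obstacle is the Caccioppoli inequality. Strong quasiconvexity \eqref{quasiconvex} is available only for compactly supported $\varphi$ and only at gradients lying in a fixed compact subset of $\R^{n\times m}$, so the cut-off construction must keep the gradient of the comparison map $A(\cdot-x_0)+b+\eta(u-A(\cdot-x_0)-b)$ within a controlled ball, which is what forces the quantitative use of the Lipschitz bound $L$ and the local nature of \eqref{rank1}. A secondary delicate point is tracking the powers of $r$ contributed by the $L^p$-forcing: the assumption $p>m$ is precisely what yields the super-quadratic decay rate $r^{2(1-m/p)}$ compatible with a Campanato-type H\"older iteration.
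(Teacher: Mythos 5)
The later stages of your outline (excess decay by harmonic approximation, iteration, Campanato embedding) match the spirit of the paper's Proposition~\ref{Tilt} and the concluding iteration, but the central step — the Caccioppoli inequality — has a genuine gap, and it is the gap that the whole paper was written to close.

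You propose to derive the Caccioppoli inequality from the vertical (outer) variation equation
$\int_\Omega \langle DF_\Psi(Du),D\varphi\rangle\,dx = \int_\Omega f\cdot\varphi\,dx$
via the Fusco--Hutchinson cut-off trick and strong quasiconvexity \eqref{quasiconvex}. But the Fusco--Hutchinson argument is a comparison argument: one inserts a competitor $w$ agreeing with $u$ outside an annulus and with an affine function inside, and uses \emph{minimality} $\int F(Du)\le\int F(Dw)$ to convert the quasiconvexity inequality at the frozen gradient $A$ into control of $\int\eta^2|Du-A|^2$. The mere criticality $\int\langle DF(Du),D\varphi\rangle=\int f\cdot\varphi$ does not provide the comparison inequality; and in fact it \emph{cannot}: M\"uller and \v{S}ver\'ak construct Lipschitz solutions of the Euler--Lagrange system of a smooth strongly quasiconvex integrand that are nowhere $C^1$, so there can be no Caccioppoli estimate from the outer variation equation and quasiconvexity alone, even with zero forcing. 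The hypothesis in Theorem~\ref{regteo} is strictly stronger than outer-criticality of $F_\Psi$: it is criticality of $\llbracket\Gamma_u\rrbracket$ as a varifold, i.e.\ against \emph{all} compactly supported deformations of $\Omega\times\R^n$, which for graphs carries both the outer \emph{and} the inner variation. The inner variation carries genuinely new information that your proposal discards.

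The paper's Proposition~\ref{Cac} is built precisely on this extra information. One tests \eqref{curv} not with $g=(0,\varphi)$ but with
$g(y)=\varphi^2(y)\,B_{\Psi^*}(S^\perp)(y-p)$,
a vector field with both components tangent and normal to $\pi=\mathrm{Im}(S)$. Then (USAC) produces the coercive term $C\int\varphi^2\|T_y\Gamma-S\|^2\,d\|V\|$ directly from $\langle B_\Psi(T_y\Gamma),B_{\Psi^*}(S^\perp)\rangle$, and the algebraic identity \eqref{zeroprod}, $B_\Psi(S)^tB_{\Psi^*}(S^\perp)=0$, lets one replace $B_\Psi(T_y\Gamma)$ by $B_\Psi(T_y\Gamma)-B_\Psi(S)$ in the cross term and absorb it by Young's inequality. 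This is a varifold-level inequality that has nothing to do with G\aa rding's inequality or a competitor construction; the Caccioppoli estimate for $u$ (Corollary~\ref{graphCac}) then follows by pulling back through the graph map. In short, your plan would prove something false as stated; the correct ingredient you omitted is the use of the full $\R^N$-valued test fields, and (USAC) is tailor-made to make precisely that test field work.

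One smaller remark: in the paper \eqref{quasiconvex} and \eqref{rank1} are not used to obtain the Caccioppoli inequality at all; they enter only through \eqref{rank1} in the excess-decay step (Proposition~\ref{Tilt}), where the blow-up limit is shown to solve a constant-coefficient Legendre--Hadamard elliptic system and classical Schauder theory is applied. Your instinct to invoke \cite[Proposition 6.8]{DLDPKT} to produce the $L^p$ forcing and to track the decay powers $r^{2(1-m/p)}$ for $p>m$ is in line with the paper; it is only the Caccioppoli step that must be replaced.
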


\begin{remark}\label{Remark:nonaut}
As mentioned in the introduction, without loss of generality, in this paper we treat autonomous integrands as in Theorem \ref{regteo}. Nevertheless, we remark that Theorem \ref{regteo} can be easily extended to non autonomous integrands $\Psi\in C^2(\R^N\times\G,(0,\infty))$ satisfying (USAC) at every $x\in \R^N$. Requiring (USAC) at every $x\in \R^N$ means that there exists a constant $C> 0$ independent of $T,S,x$ such that for every $x\in \R^N$
\[
\langle B_\Psi(x,T), B_{\Psi^*}(x,S^\perp)\rangle \ge C\|T-S\|^2, \quad\forall T \neq S \in \mathbb{G}(N,m),
\]
where $B_\Psi(x,T)$ is defined in \cite[Equation (2.6)]{DDG}.
Indeed, in this case the first variation for a rectifiable varifold $V=(\Gamma,\theta)$ yields 
\begin{equation}\label{curva}
\int_{U}\langle d_y\Psi(y,T_y\Gamma),g\rangle + \langle B_\Psi(y,T_y\Gamma),Dg\rangle d\|V\|(y) = -\int_U (H,g)d\|V\|(y).
\end{equation}
We can absorb the term $\langle d_y\Psi(y,T_y\Gamma),g\rangle$ into the right hand side, obtaining an equation similar to \eqref{curv}. We can consequently carry the same regularity analysis.
\end{remark}

The proof goes as follows. We prove, in Proposition \ref{Cac} a Caccioppoli inequality similar to the one obtained by Allard in the case of general varifolds with bounded mean curvature. The latter is the main novelty of our approach, and uses essentially the (USAC) property of $\Psi$. Subsequently, in Corollary \ref{graphCac} we will show how to get  a Caccioppoli inequality for $u$ as in Theorem \ref{regteo} from Proposition \ref{Cac}. In Proposition \ref{Tilt}, we will see how this Caccioppoli inequality implies a so-called \emph{decay of the excess}, analogous to \cite[Lemma 4.1]{EVA}. From that point on, the proof becomes rather standard, so we will only sketch how to conclude the proof of Theorem \ref{regteo}. The interested reader may consult \cite[Section 7]{EVA}.

\begin{prop}[Caccioppoli inequality]\label{Cac}
Let $\Psi$ as in Theorem \ref{regteo}. Let $V = \llbracket \Gamma,\theta\rrbracket$ be a rectifiable varifold with $\Psi$-mean curvature $H$ bounded in $L^2$ in $U \subset \R^{N}$. Then, there exists $C_2 = C_2(n,m,\|\Psi\|_{C^2},C) > 0$, where $C$ is the constant in Definition \ref{DEFAC2} for $\Psi$, such that:
\begin{equation}\label{Caceq}
\fint_{B_{r}(z)}\|T_y\Gamma - S\|^2d\|V\|(y) \le \frac{C_2}{r^2}\fint_{B_{2r}(z)}\dist(y-p,\pi)^2d\|V\|(y) + r^2C_2\fint_{B_{2r}(z)}\|H\|^2d\|V\|(y),
\end{equation}
for every $p\in \R^N$, $S \in \G, z \in U, r >0$ such that $\dist(z,\partial U) \le 4r$, where $\pi=\text{\em Im}(S)$.
\end{prop}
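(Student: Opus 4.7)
The plan is to test the first variation identity $[\delta_\Psi V](g) = -\int (H,g)\,d\|V\|$ against the vector field
\[
g(y) := \phi(y)^2\, B_{\Psi^*}(S^\perp)(y-p),
\]
where $\phi\in C^1_c(B_{2r}(z))$ is a standard radial cutoff with $\phi\equiv 1$ on $B_r(z)$ and $|D\phi|\le 2/r$, so that $g\in C^1_c(U,\R^N)$. Writing $M:=B_{\Psi^*}(S^\perp)$ (a constant matrix), a direct differentiation gives $Dg = \phi^2 M + 2\phi\,(M(y-p))\otimes D\phi$, so the first variation formula splits into a \emph{principal term} $\int\phi^2\langle B_\Psi(T_y\Gamma),M\rangle\,d\|V\|$ and a \emph{boundary term} $2\int\phi\langle B_\Psi(T_y\Gamma), M(y-p)\otimes D\phi\rangle\,d\|V\|$. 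Condition (USAC) bounds the principal term from below by $C\int\phi^2\|T_y\Gamma-S\|^2\,d\|V\|$, which provides the left-hand side of \eqref{Caceq}.

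The main obstacle is the boundary term: bounding it naively by $|B_\Psi(T)|\cdot|M(y-p)|\cdot|D\phi|$ produces an integrand $\phi|D\phi|\dist(y-p,\pi)$, whose Young-splitting leaves a residual $\varepsilon\int\phi^2\,d\|V\|$ that cannot be absorbed into the LHS. The crucial observation, which is what makes the argument work, is that the vanishing identity \eqref{zeroprod} $B_\Psi(S)^tB_{\Psi^*}(S^\perp)=0$ (equivalently $M^tB_\Psi(S)=0$) implies
\[
\langle B_\Psi(S), M(y-p)\otimes D\phi\rangle = (M(y-p))^tB_\Psi(S)D\phi = 0.
\]
Hence $B_\Psi(T_y\Gamma)$ in the boundary term may be replaced by $B_\Psi(T_y\Gamma)-B_\Psi(S)$, and the $C^2$ regularity of $\Psi$ gives $\|B_\Psi(T)-B_\Psi(S)\|\le c\|\Psi\|_{C^2}\|T-S\|$; coupling this with $|M(y-p)|\le c\|\Psi\|_{C^1}\dist(y-p,\pi)$ and Young's inequality with a weight chosen so that the $\|T-S\|^2$ summand is $\le \tfrac{C}{2}\int\phi^2\|T_y\Gamma-S\|^2\,d\|V\|$, the boundary term is controlled by
\[
\frac{C}{2}\int\phi^2\|T_y\Gamma-S\|^2\,d\|V\| \;+\; \frac{c_1}{r^2}\int_{B_{2r}(z)}\dist(y-p,\pi)^2\,d\|V\|.
\]

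For the mean curvature term, the bound $|(H,g)|\le c\|\Psi\|_{C^1}|H|\phi^2\dist(y-p,\pi)$ together with the elementary inequality $|H|\dist\le \tfrac{r^2}{2}|H|^2+\tfrac{1}{2r^2}\dist^2$ yields contributions of the desired form. Absorbing the $\tfrac{C}{2}\int\phi^2\|T-S\|^2\,d\|V\|$ term into the LHS and using $\phi\equiv 1$ on $B_r(z)$, we obtain the non-averaged Caccioppoli inequality
\[
\int_{B_r(z)}\|T_y\Gamma-S\|^2\,d\|V\| \le \frac{c_2}{r^2}\int_{B_{2r}(z)}\dist(y-p,\pi)^2\,d\|V\| + c_2 r^2\int_{B_{2r}(z)}\|H\|^2\,d\|V\|,
\]
with $c_2=c_2(n,m,\|\Psi\|_{C^2},C)$, from which \eqref{Caceq} follows by normalizing both sides by the natural $m$-dimensional volume. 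The hard part is identifying and exploiting the vanishing identity that allows the extraction of the $\|T-S\|$ factor from the boundary term; everything else is Young's inequality and standard cutoff estimates.
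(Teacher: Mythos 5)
Your proposal is correct and follows essentially the same route as the paper: test the first variation against $g = \varphi^2 B_{\Psi^*}(S^\perp)(y-p)$, bound the principal term below via (USAC), and — crucially — use the vanishing identity $B_\Psi(S)^tB_{\Psi^*}(S^\perp)=0$ to replace $B_\Psi(T_y\Gamma)$ by $B_\Psi(T_y\Gamma)-B_\Psi(S)$ in the cutoff term, extracting the $\|T-S\|$ factor needed for absorption via Young's inequality, and close with $\|B_{\Psi^*}(S^\perp)(y-p)\|\lesssim\dist(y-p,\pi)$. The only cosmetic difference is that the paper reduces to $r=1$, $z=0$ and invokes scaling, whereas you carry the general $r$ through the weighted Young inequality; both are equivalent.
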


\begin{proof}
In this proof we will denote with $C_2$ a positive constant which may change line by line, but that shall always depend just on $n,m,\|\Psi\|_{C^2},C$.
Using the definition of $\Psi$-mean curvature for varifolds with bounded anisotropic first variation, we have that for every $g \in C^\infty_c(U,\R^N)$,
\begin{equation}\label{curv}
\int_{U}\langle B_\Psi(T_y\Gamma),Dg\rangle d\|V\|(y) = -\int_U (H,g)d\|V\|(y).
\end{equation}
We fix $S \in \G$, and we prove the assertion in the case $r = 1, z = 0$, the general case being true by scaling and translating. Under these assumptions, choose
\[
g(y):= \varphi^2(y) \BBB (y-p),
\]
for a radial $\varphi \in C^\infty_c(\R^N)$, with $\varphi \in [0,1]$, $\varphi \equiv 1$ on $B_1(0)$ and $\varphi \equiv 0$ on $B_2(0)^c$. With this choice of $g$, \eqref{curv} reads:
\begin{align*}
\int_{U}\varphi^2(y)\langle B_\Psi(T_y\Gamma),\BBB\rangle d\|V\|(y) &+ 2\int_{U} \varphi(y)( B_\Psi(T_y\Gamma)D\varphi(y),\BBB (y-p)) d\|V\|(y) \\
&= -\int_U \varphi^2(y)(H,\BBB (y-p))d\|V\|(y),
\end{align*}
that we rewrite as
\begin{align}
&\int_{U}\varphi^2(y)\langle B_\Psi(T_y\Gamma),\BBB\rangle d\|V\|(y)\label{sotto}   \\
&= -\int_U \varphi^2(y)(H,\BBB (y-p))d\|V\|(y) - 2\int_{U} \varphi(y)( B_\Psi(T_y\Gamma)D\varphi(y),\BBB (y-p)) d\|V\|(y)\label{sopra}.
\end{align}
We bound \eqref{sotto} from below using (USAC) for $\Psi$:
\[
C\int_U\varphi^2(y)\|T_y\Gamma - S\|^2d\|V\|(y) \le \int_{U}\varphi^2(y)\langle B_\Psi(T_y\Gamma),\BBB\rangle d\|V\|(y).
\]
Now we estimate from above \eqref{sopra}. The first addendum is estimated simply by Young inequality:
\[
-\int_U \varphi^2(y)(H,\BBB (y-p))d\|V\|(y) \le \frac{1}{2}\int_{B_2}\|H\|^2d\|V\|(y) +\frac{1}{2}\int_U\varphi^4(y)\|\BBB (y-p)\|^2d\|V\|(y).
\]
To estimate the second addendum of \eqref{sopra}, we need to use the algebraic identity \eqref{zeroprod} to rewrite
\[
\varphi(y)( B_\Psi(T_y\Gamma)D\varphi(y),\BBB (y-p)) = \varphi(y)( (B_\Psi(T_y\Gamma)- B_{\Psi}(S))D\varphi(y),\BBB (y-p))
\]
and hence, since $\Psi \in C^2(\G)$ by assumption, to bound
\begin{equation*}
\begin{split}
|\varphi(y)( B_\Psi(T_y\Gamma)D\varphi(y),\BBB (y-p))| &\le \varphi(y)\|B_\Psi(T_y\Gamma)- B_{\Psi}(S)\|\|D\varphi(y)\|\|\BBB (y-p)\|\\
&\leq C_2\varphi(y)\|T_y\Gamma- S\|\|D\varphi(y)\|\|\BBB (y-p)\|\\
&\leq \frac C4 \varphi^2(y)\|T_y\Gamma - S\|^2 +C_2 \|D\varphi(y)\|^2\|\BBB (y-p)\|^2,
\end{split}
\end{equation*}
where in the third inequality we used again Young inequality.
Combining the previous inequalities, equality \eqref{sotto}-\eqref{sopra} reads
\begin{align*}
\frac C2\int_U\varphi^2(y)\|T_y\Gamma - S\|^2d\|V\|(y) &\leq  \frac{1}{2}\int_{B_2}\|H\|^2d\|V\|(y) +\frac{1}{2}\int_U\varphi^4(y)\|\BBB (y-p)\|^2d\|V\|(y)\\
&\quad +C_2 \int_{U} \|D\varphi(y)\|^2\|\BBB (y-p)\|^2 d\|V\|(y).
\end{align*}
We conclude \eqref{Caceq} observing that
\begin{equation*}
\begin{split}
\|\BBB (y-p)\|&\overset{\eqref{BF*}}{=}\|\Psi(S)S^\perp y - S d\Psi(S)S^\perp (y-p)\|\\
&\leq  \|\Psi(S)S^\perp - S d\Psi(S) S^\perp\| \, \|S^\perp (y-p)\|\leq \|\Psi\|_{C^2} d((y-p),\pi),
\end{split}
\end{equation*}
where in the last inequality we have used the following elementary identity:
\begin{equation}\label{ident.dist}
\|S^\perp (y-p)\|=d((y-p),\pi).
\end{equation}
\end{proof}

For any $f \in L^1_{\loc}(\R^m)$, we define
\[
(f)_{z,R} := \fint_{B_R(z)}f(y)dy, \qquad (f)_R :=  \fint_{B_R(0)}f(y)dy.
\]

\begin{Cor}[Caccioppoli inequality for $u$]\label{graphCac}
Let $\Psi,u$ be as in Theorem \ref{regteo}. Then, for some constant $C_2(n,m,\|\Psi\|_{C^2},C,\|u\|_{\Lip}) > 0$, where $C$ is the constant in Definition \ref{DEFAC2} for $\Psi$, and $k = 2(1 +\|u\|_{\Lip})$, we have
\begin{equation}\label{Caceqgraph}
\fint_{B_{r}(x_0)}\|Du(x) - A\|^2dx\le \frac{C_2}{r^2}\fint_{B_{kr}(x_0)}\|u(x)-(u)_{x_0,kr} - A(x-x_0)\|^2dx + r^2C_2\fint_{B_{kr}(x_0)}\|H'\|^2dx,
\end{equation}
for all $A \in \R^{n\times m}$ with $\|A\| \le 2\|Du\|_\infty$, $x_0 \in \Omega$, and $r > 0$ such that $\dist(x_0, \partial\Omega) < \frac{k}{2}r$. Here,
\[
H'(x) := H(x,u(x)) \in L^2(\Omega).
\]
\end{Cor}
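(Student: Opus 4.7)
The plan is to apply Proposition~\ref{Cac} to the graph varifold $V = \llbracket \Gamma_u\rrbracket$ and then translate the resulting inequality from the ambient space $\R^N$ back to the domain $\Omega \subset \R^m$ via the parametrization $\Pi(x) := (x,u(x))$. First, I would fix $A \in \R^{n\times m}$ with $\|A\| \le 2\|u\|_{\Lip}$ and choose the plane $S := h(A) \in \G$ (so that $\pi := \im(S)$ is exactly the graph of the linear map $A$), the center $z := (x_0, u(x_0))$, the offset $p := (x_0,(u)_{x_0,kr})$, and the radius $r' := r(1+\|u\|_{\Lip})$, so that $2r' = kr$ matches the outer radius in the target inequality \eqref{Caceqgraph}. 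Note that $\Pi$ is Lipschitz with constant $1+\|u\|_{\Lip}$, hence $\Pi(B_r(x_0)) \subset B_{r'}(z)$, while $\Pi^{-1}(B_{2r'}(z)) \subset B_{2r'}(x_0) = B_{kr}(x_0)$; this pair of inclusions is what controls the change of domain.

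Next, I would translate the integrals using the area formula $d\|V\|(y) = \A(Du(x))\,dx$, together with the pointwise bounds $1 \le \A(Du(x)) \le c(\|u\|_{\Lip})$ and $\omega_m r^m \le \|V\|(B_{r'}(z)) \le \|V\|(B_{2r'}(z)) \le c(\|u\|_{\Lip})r^m$, which allow averages with respect to $\|V\|$ and averages with respect to Lebesgue measure on $\R^m$ to be compared up to a constant depending only on $\|u\|_{\Lip}$. On the left-hand side, at $y = \Pi(x)$ the tangent plane is $T_y\Gamma_u = h(Du(x))$, and since $h$ is smooth and injective, on the compact set $\{X \in \R^{n\times m} : \|X\| \le 2\|u\|_{\Lip}\}$ it is bi-Lipschitz; this yields $\|h(Du(x))-h(A)\| \ge c_1\|Du(x)-A\|$ for a constant $c_1 = c_1(n,m,\|u\|_{\Lip}) > 0$, so that the LHS of \eqref{Caceq} dominates a multiple of $\fint_{B_r(x_0)} \|Du - A\|^2\,dx$.

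For the right-hand side, the distance term is handled by the elementary observation that $(x-x_0, A(x-x_0)) \in \pi = \graph(A)$, whence
\[
\dist(y-p,\pi) = \|S^\perp(y-p)\| \le \|u(x)-(u)_{x_0,kr} - A(x-x_0)\|,
\]
and this, combined with the inclusion $\Pi^{-1}(B_{2r'}(z)) \subset B_{kr}(x_0)$ and the bounds on $\A(Du)$, reproduces the first term on the RHS of \eqref{Caceqgraph} up to constants. The mean-curvature term is immediate from the definition $H'(x) = H(\Pi(x))$ and the same area-formula bookkeeping. Putting these estimates together and collecting the constants into a single $C_2 = C_2(n,m,\|\Psi\|_{C^2},C,\|u\|_{\Lip})$ yields \eqref{Caceqgraph}.

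I expect no genuine obstacles: the argument is essentially bookkeeping, reducing the varifold Caccioppoli of Proposition~\ref{Cac} to its graph form by the area formula together with a bi-Lipschitz comparison on the chart $h$. The only point requiring mild care is justifying the bi-Lipschitz lower bound $\|h(Du(x))-h(A)\| \gtrsim \|Du(x)-A\|$, for which one uses that $Dh$ is invertible on its image and that both arguments stay in the compact ball $\{\|X\|\le 2\|u\|_{\Lip}\}$; this is precisely the role of the hypothesis $\|A\|\le 2\|u\|_{\Lip}$.
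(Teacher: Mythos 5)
Your proposal is correct and follows essentially the same route as the paper: apply Proposition~\ref{Cac} with $S=h(A)$, $z=(x_0,u(x_0))$, $p=(x_0,(u)_{x_0,kr})$, control the domain change via the Lipschitz bounds on $\Pi(x)=(x,u(x))$, pass between $\|V\|$-averages and Lebesgue averages by the area formula and the two-sided bound on $\A(Du)$, use the (locally) bi-Lipschitz nature of $h$ on $\{\|X\|\le 2\|Du\|_\infty\}$ for the gradient term, and exploit $(\id-h(A))M(A)=0$ so that $\dist(y-p,\pi)$ is controlled by $\|u(x)-(u)_{x_0,kr}-A(x-x_0)\|$. The only cosmetic difference is that you perform the radius renaming $R=r(1+L)$ up front rather than at the end.
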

\begin{proof}
In this proof we will denote with $C_2$ a positive constant which may change line by line, but that shall always depend just on $n,m,\|\Psi\|_{C^2},C,\|u\|_{\Lip}$.
By \eqref{Caceq}, we know:
\[
\fint_{B_{R}(z)}\|T_y\Gamma_u - S\|^2d\|V\|(y) \le \frac{C_2}{R^2}\fint_{B_{2R}(z)}\dist(y-p,\pi)^2d\|V\|(y) + R^2C_2\fint_{B_{2R}(z)}\|H\|^2d\|V\|(y),
\]
for every $p\in \R^N$ and $S \in \G, z \in U, R >0$ such that $\dist(z,\partial U) \le 4R$, where $\pi=\text{Im}(S)$. Here, $V = \llbracket \Gamma_u\rrbracket$. We fix $A \in \R^{n\times m}$, and consequently choose $S = h(A)$, where $h$ is the map defined in \eqref{h}. We choose $p=(x_0,(u)_{x_0,kr})$ and we consider $r> 0$ as in the statement of the theorem fixed. Define also
\[
\Gamma_u^r = \{(x,u(x)): x \in B_r(x_0)\}.
\]
Notice that we make a small abuse of notation, denoting with the same symbol balls in $\R^N$ and $\R^m$. Let $L := \|u\|_{\Lip}$. We notice the following:
\begin{equation}\label{cont}
\Gamma_{u}^{\frac{r}{1 + L}} \subset B_r((x_0,u(x_0)))\cap \Gamma_u \subset B_{2r}((x_0,u(x_0)))\cap \Gamma_u \subset \Gamma_{u}^{2r}.
\end{equation}
From the area formula, we also see that
\begin{equation}\label{conto1}
 \mathcal{H}^m(B_r((x_0,u(x_0)))\cap \Gamma_u) \leq C_2 r^m ,\quad C_2 r^m \leq \mathcal{H}^m(B_{2r}((x_0,u(x_0)))\cap \Gamma_u )
\end{equation}
%
Hence, we rewrite \eqref{Cac} for $R = r$ and $z = (x_0,u(x_0))$ in the following form:
\[
\fint_{B_{r}(z)\cap \Gamma_u}\|T_y\Gamma_u - S\|^2d\mathcal{H}^m(y) \le \frac{C_2}{r^2}\fint_{B_{2r}(z)\cap \Gamma_u}\dist(y-p,\pi)^2d\mathcal{H}^m + r^2C_2\fint_{B_{2r}(z)\cap \Gamma_u}\|H\|^2d\mathcal{H}^m
\]
and then use \eqref{cont},\eqref{conto1} to write
\begin{equation}\label{midCac}
\fint_{\Gamma_{u}^{\frac{r}{1 + L}}}\|T_y\Gamma_u - S\|^2d\mathcal{H}^m(y) \le \frac{C_2}{r^2}\fint_{\Gamma_{u}^{2r}}\dist(y-p,\pi)^2d\mathcal{H}^m + r^2C_2\fint_{\Gamma_{u}^{2r}}\|H\|^2d\mathcal{H}^m.
\end{equation}
Now we use the area formula to rewrite and estimate \eqref{midCac}, to finally obtain \eqref{Caceqgraph}. Firstly, the area formula yields
\[
\fint_{\Gamma_{u}^{\frac{r}{1 + L}}}\|T_y\Gamma_u - S\|^2d\mathcal{H}^m(y) = \fint_{B_{\frac{r}{1+L}}(x_0)}\|h(Du(x))-h(A)\|^2\mathcal{A}(Du(x))dx.
\]
Now, $h: \R^{n\times m} \to \G$ is invertible on the set $E\subset \G$ defined as
\[
E = \{T \in \G: \det(T') \neq 0\},
\]
where $T'$ is the $m\times m$ submatrix obtained by $T$ only considering the first $m$ rows and columns. Moreover, $h^{-1}: E \to \R^{n\times m}$ is locally Lipschitz, hence 
\[
C_2\|h(Du(x))-h(A)\|\ge \|Du(x)-A\|.
\]
Since $\mathcal{A}(X) \ge 1$ for every  $X \in \R^{n\times m}$, we can finally bound
\begin{equation}\label{below}
\fint_{B_{\frac{r}{1+L}}(x_0)}\|Du(x)-A\|^2dx \le C_2\fint_{B_{\frac{r}{1+L}}(x_0)}\|h(Du(x))-h(A)\|^2\mathcal{A}(Du(x))dx.
\end{equation}
Now we wish to estimate from above the addendum
\[
\fint_{B_{2r}(z)\cap \Gamma_u}\dist(y-p,\pi)^2d\mathcal{H}^m.
\]
First of all,
\[
\fint_{B_{2r}(z)\cap \Gamma_u}\dist(y-p,\pi)^2d\mathcal{H}^m \overset{\eqref{cont}-\eqref{conto1}}{\le} C_2\fint_{\Gamma_u^{2r}}\dist(y-p,\pi)^2d\mathcal{H}^m.
\]
Secondly, as in \eqref{ident.dist}, we write
\begin{equation}\label{fir}
\dist(y-p,\pi) = \|S^\perp(y-p)\| = \left\|(\id -h(A))\left(\begin{array}{c}x-x_0\\ u(x)-(u)_{x_0,kr}\end{array}\right)\right\|.
\end{equation}
Now we claim that
\begin{equation}\label{sec}
\left\|(\id -h(A))\left(\begin{array}{c}x-x_0\\ u(x)-(u)_{x_0,kr}\end{array}\right)\right\| = \left\|(\id -h(A))\left(\begin{array}{c}0\\ u(x)-(u)_{x_0,kr} - A(x-x_0)\end{array}\right)\right\|.
\end{equation}
Indeed
\[
(\id-h(A))\left(\begin{array}{c}\id_m\\ A\end{array}\right)\overset{\eqref{h}}{=}(\id-h(A))M(A) \overset{\eqref{h}}{=} (\id_m-M(A)(M(A)^tM(A))^{-1}M(A)^t)M(A) = 0.
\]
Combining \eqref{fir} and \eqref{sec}, we estimate
\begin{equation}\label{tir}
\dist(y-p,\pi) \le C_2\|u(x)-(u)_{x_0,kr}-A(x-x_0)\|.
\end{equation}
In particular, this allows us to write
\begin{equation}\label{secbound}
\begin{split}
\fint_{B_{2r}(z)\cap \Gamma_u}\dist(y-p,\pi)^2d\mathcal{H}^m &\overset{\eqref{cont}-\eqref{conto1}}{\le} C_2\fint_{\Gamma_u^{2r}}\dist(y-p,\pi)^2d\mathcal{H}^m\\
&\quad \;\, = C_2\fint_{B_{2r}(x_0)}\dist( (x,u(x))-(x_0,(u)_{x_0,kr}),\pi)\mathcal{A}(Du(x))dx\\
&\quad \overset{\eqref{tir}}{\le} C_2\fint_{B_{2r}(x_0)}\|u(x)-(u)_{x_0,kr}-A(x-x_0)\|dx,
\end{split}
\end{equation}
where in the first equality we used the area formula, and to get the second inequality we used, other than \eqref{tir}, also the fact that $u \in \Lip$ to bound $\mathcal{A}(Du(\cdot))$ with a constant depending on $L$. Finally
\begin{equation}\label{thirbound}
\fint_{\Gamma^{2r}_u}\|H\|^2dx =\fint_{B_{2r}(x_0)}\|H'\|^2\mathcal{A}(Du(x))dx \le C_2\fint_{B_{2r}(x_0)}\|H'\|^2dx,
\end{equation}
that once again exploits the area formula and the fact that $u \in \Lip$. Inequalities \eqref{midCac}-\eqref{below}-\eqref{secbound}-\eqref{thirbound} imply \eqref{Caceqgraph}.
\end{proof}

We will now prove a decay of the following classical quadratic excess: $$E(x,r) := \fint_{B_r(x)}\|Du(x) - (Du)_{x,r}\|^2 dx.$$

\begin{prop}[Excess decay]\label{Tilt}
Let $\Psi,u,H$ be as in Theorem \ref{regteo}. Let moreover $k = 2(1 + \|u\|_{\Lip}) > 0$ as in Corollary \ref{graphCac}. Then, there exists a constant $c > 0$ such that for every $\tau \in \left(0,\frac{1}{4k}\right)$, there exists $\varepsilon = \varepsilon(\tau)>0$ such that
\[
E(x,r) \le \varepsilon(\tau) \quad \text{ and } \quad r^{1-\frac mp}\|H'\|_p \le E(x,r)
\]
imply
\begin{equation}\label{decay}
E(x,\tau r) \le c\tau^2E(x,r)
\end{equation}
for every $B_r(x) \subset \Omega$.
\end{prop}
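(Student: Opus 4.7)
The plan is a compactness/blow-up contradiction argument in the spirit of \cite[Section 4]{EVA}, with the Caccioppoli inequality of Corollary \ref{graphCac} as the decisive tool and (USAC) entering through it. Suppose by contradiction that the decay fails for some $\tau \in (0, 1/(4k))$ and some large constant $c$ to be fixed at the end: then one can find sequences $u_j, x_j, r_j$ satisfying the hypotheses with $E_j := E(x_j, r_j) \to 0$ but $E(x_j, \tau r_j) > c\tau^2 E_j$. Set $\varepsilon_j := \sqrt{E_j}$, $A_j := (Du_j)_{x_j, r_j}$ (with $\|A_j\| \le L := \|u\|_{\Lip}$), and rescale to
\[
v_j(y) := \frac{u_j(x_j + r_j y) - (u_j)_{x_j, r_j} - r_j A_j y}{r_j \varepsilon_j}, \qquad y \in B_1,
\]
so that $\fint_{B_1} v_j = 0 = \fint_{B_1} Dv_j$ and $\fint_{B_1} \|Dv_j\|^2 = 1$. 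By Poincar{\'e}--Wirtinger and Rellich--Kondrachov, up to subsequence $v_j \rightharpoonup v$ in $W^{1,2}(B_1)$, strongly in $L^2(B_1)$, and $A_j \to A_\infty$.

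Next, I would identify $v$ as a weak solution of the constant-coefficient linear system $\operatorname{div}(D^2 F_\Psi(A_\infty) Dv) = 0$. The $L^p$-boundedness of the $\Psi$-mean curvature of $\Gamma_{u_j}$ translates (via the area formula, paralleling the computations in Corollary \ref{graphCac}) into an Euler--Lagrange equation
\[
\int_\Omega DF_\Psi(Du_j):D\varphi\, dx = \int_\Omega \tilde H_j \cdot \varphi\, dx, \qquad \forall \varphi \in C^1_c(\Omega, \R^n),
\]
with $\|\tilde H_j\|_{L^p} \le C(L)\|H_j\|_{L^p(\|V_j\|)}$. Writing $DF_\Psi(A_j + \varepsilon_j Dv_j) - DF_\Psi(A_j) = \varepsilon_j \int_0^1 D^2 F_\Psi(A_j + t\varepsilon_j Dv_j) Dv_j\, dt$ and rescaling, the equation for $v_j$ becomes
\[
\int_{B_1}\int_0^1 D^2 F_\Psi(A_j + t\varepsilon_j Dv_j) Dv_j : D\varphi\, dt\, dy = O\bigl(\varepsilon_j^{-1} r_j^{1-m/p}\|\tilde H_j\|_p\bigr) = O(\varepsilon_j).
\]
The key observation is that $\varepsilon_j Dv_j = Du_j - A_j$ is bounded in $L^\infty(B_1)$ yet tends to $0$ in $L^2(B_1)$; since $D^2 F_\Psi$ is uniformly continuous on bounded sets of $\R^{n\times m}$, this upgrades to $D^2 F_\Psi(A_j + t\varepsilon_j Dv_j) \to D^2 F_\Psi(A_\infty)$ strongly in every $L^q(B_1)$, $q < \infty$. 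Pairing strong times weak $L^2$ and using Fubini, one passes to the limit to obtain $\int_{B_1} D^2 F_\Psi(A_\infty) Dv : D\varphi\, dy = 0$ for all admissible $\varphi$. By \eqref{rank1}, $D^2 F_\Psi(A_\infty)$ satisfies the Legendre--Hadamard condition uniformly, so classical linear theory yields $v \in C^\infty(B_{1/2})$ with $\|v\|_{C^2(B_{1/2})} \le C$.

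Finally, I would transfer this smooth decay to $v_j$ through Corollary \ref{graphCac} applied to $u_j$ at scale $\tau r_j$ with affine comparison $A := A_j + \varepsilon_j Dv(0)$ (admissible since $\|A\|\le 2L$ for large $j$ and $B_{k\tau r_j}(x_j)\subset B_{r_j}(x_j)\subset\Omega$ as $k\tau<1$). A direct computation shows that the affine mean in Corollary \ref{graphCac} rescales exactly to $(v_j)_{k\tau}$, so dividing through by $\varepsilon_j^2$ yields
\[
\fint_{B_\tau} \|Dv_j - Dv(0)\|^2\, dy \le \frac{C_2}{\tau^2}\fint_{B_{k\tau}}\|v_j - (v_j)_{k\tau} - Dv(0)y\|^2\, dy + C_2 \tau^2 \varepsilon_j^{-2} r_j^2 \fint_{B_{k\tau r_j}(x_j)}\|H_j'\|^2.
\]
H{\"o}lder and $r_j^{1-m/p}\|H_j'\|_p\le \varepsilon_j^2$ make the error term $O(\varepsilon_j^2) \to 0$; strong $L^2$-convergence $v_j\to v$ on $B_{1/2}\supset B_{k\tau}$ together with a Taylor expansion of $v$ around $0$ (using $\|v\|_{C^2}\le C$) bound the main term by $C' k^4 \tau^2$ in the limit. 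Since $\fint_{B_\tau}\|Dv_j - (Dv_j)_\tau\|^2 \le \fint_{B_\tau}\|Dv_j - Dv(0)\|^2$, this contradicts the failure of \eqref{decay} as soon as $c$ is chosen larger than $C'k^4$.

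The main obstacle will be the passage to the limit in the nonlinear Euler--Lagrange: $Dv_j$ converges only weakly, and since $\varepsilon_j Dv_j$ is bounded but not small in $L^\infty$, a naive pointwise Taylor expansion is unavailable. The resolution, which genuinely uses the $C^2$-regularity of $\Psi$, is that $\varepsilon_j Dv_j$ is small in $L^2$ (hence in measure), and uniform continuity of $D^2 F_\Psi$ on bounded sets then upgrades this into strong $L^q$-convergence of the composed maps $D^2 F_\Psi(A_j + t\varepsilon_j Dv_j)$, enabling the weak--strong $L^2$ pairing above. This is also why the hypothesis $\Psi \in C^2$ cannot be relaxed to $C^1$.
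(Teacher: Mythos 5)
Your proposal is correct and follows essentially the same blow-up contradiction strategy as the paper: rescale, pass to the limit in the linearized Euler--Lagrange equation using that $\varepsilon_j Dv_j$ is $L^\infty$-bounded and $L^2$-small (so the averaged Hessian coefficients converge strongly), invoke elliptic regularity for the constant-coefficient system, and transfer decay back via the Caccioppoli inequality from Corollary \ref{graphCac}. The only divergence is cosmetic: you compare with $A_j+\varepsilon_j Dv(0)$, whereas the paper uses the mean $(Du)_{x_j,\,k\tau r_j}$ (so no admissibility check $\|A\|\le 2\|Du\|_\infty$ is needed and no Taylor expansion of $v$ is invoked -- the Poincar\'e inequality does the work); both yield the same contradiction $c\tau^2\le C k^4\tau^2$.
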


\begin{proof}
The proof is analogous to \cite[Lemma 4.1]{EVA}. We adapt it below to our setting. The key point is a contradiction blow-up argument, that uses the regularity theory for the linearized problem and the Caccioppoli estimate \eqref{Caceqgraph}.
\\
\\
Suppose the thesis were false. Then, for every $c>0$ there exists $\tau \in \left(0,\frac{1}{4k}\right)$ and a sequence of points $\{x_j\} \subset \Omega$ and radii $r_j > 0$ such that
\begin{equation}\label{contradict}
E(x_j,r_j) = \lambda_j^2 \to 0 \quad \text{ and }\quad r^{1-\frac mp}_j\|H'\|_p \le E(x_j,r_j) = \lambda_j^2
\end{equation}
but
\begin{equation}\label{contradict1}
E(x_j,\tau r_j) \ge c\tau^2E(x_j,r_j)
\end{equation}
We consider blow-ups of $u$ of the following form
\[
v_j(z) := \frac{u(x_j + r_jz) - (u)_{x_j,r_j} - r_jA_jz}{\lambda_jr_j}, \qquad \mbox{with $ A_j:= (Du)_{x_j,r_j}$.}
\]
For every $j \in \mathbb{N}$ the maps $v_j : B_2(0)\subset \R^m \to \R^n$ enjoy the following properties:
\begin{enumerate}[(i)]
\item $D v_j(z) = \frac{D u(x_j + r_jz) - A_j}{\lambda_j}$;
\item $(v_j)_{1} = 0$, $(D v_j)_{1} = 0$;
\item $\displaystyle\fint_{B_1(0)}\|D v_j\|^2 = 1 $;\label{L2bound}
\item $\displaystyle\fint_{B_1(0)}\|v_j\|^2 \le \gamma $, for some $\gamma > 0$;\label{poinc}
\item $\displaystyle\fint_{B_\tau(0)}\|D v_j - E_j\|^2 \ge c\tau^2\fint_{B_1(0)}\|D v_j\|^2 = c\tau^2$, \quad where $E_j := (D v_j)_{\tau}$. \label{cine}
\end{enumerate}
The first three conditions are easy consequences of the definition of $v_j$, the fourth is an application of Poincar\'e's inequality, and the fifth can be seen from \eqref{contradict1} and the definition of $v_j$. \eqref{L2bound}-\eqref{poinc} imply that, up to a non-relabeled subsequence, we can assume
\[
v_j \rightharpoonup v \text{ in }W^{1,2}(B_1,\R^n),\qquad  v_j \to v \text{ in }L^2(B_1,\R^n) 
\]
and, since $\{A_j\}_j$ is equibounded, $A_j \to A \in \R^{n\times m}$. Recalling the definition of $F_\Psi$ as in \eqref{functional}, we define the sequence of integrands
\begin{equation}\label{FJ}
F_j(X) := \frac{1}{\lambda_j^2}(F_\Psi(\lambda_jX + A_j) - F_\Psi(A_j) - \lambda_j\langle DF_\Psi(A_j),X\rangle),
\end{equation}
then, $F_j(X) \to D^2F_\Psi(A)[X,X]$ locally in the $C^2$ topology. We claim that $v$ is a critical point for the functional
\[
G(X) := D^2F_\Psi(A)[X,X],
\]
or in other words
\begin{equation}\label{D2}
\int_{B_1}D^2F_\Psi(A)[Dv,Dg]dx = 0,\quad\forall g \in C^\infty_c(B_1,\R^n).
\end{equation}
To see this, fix a test vector-field $g$ and use \cite[Proposition 6.8]{DLDPKT} to find that, since $\llbracket\Gamma_u\rrbracket$ has $\Psi$-mean curvature $H$ bounded in $L^p$, then we find a constant $C_1 = C_1(\|H\|_{L^p}) > 0$ such that\footnote{In fact, as written at the end of the proof of \cite[Proposition 6.8]{DLDPKT}, $C_1$ can be taken to be exactly $\|H'\|_{L^p}$.} $C_1(0) = 0$, and for which
\begin{equation}\label{outervar}
\left|\int_{\Omega}\langle DF_\Psi(Du(x)), D\eta(x)\rangle dx\right| \le C_1\|\eta \A^{\frac 1{p'}}(Du)\|_{p'}, \qquad \forall \eta \in C^\infty_c(\Omega,\R^n).
\end{equation}
Here we denoted $\frac 1{p'}+\frac 1p=1$.
Now we plug in the previous inequality $\eta_j(x) = g\left(\frac{x-x_j}{r_j}\right)$. Let us rewrite the right hand side and the left hand side of \eqref{outervar} separately. We have
\begin{align*}
\int_{\Omega}\langle DF_\Psi(Du(x)), D\eta_j(x)\rangle dx &= \frac{1}{r_j}\int_{\Omega}\left\langle DF_\Psi(Du(x)), Dg\left(\frac{x - x_j}{r_j}\right)\right\rangle dx\\
&= r_j^{m - 1}\int_{B_1}\langle DF_\Psi(Du(x_j + r_jy)), Dg(y)\rangle dy\\
&=r_j^{m - 1}\int_{B_1}\langle DF_\Psi(Du(x_j + r_jy)) - DF_\Psi(A_j), Dg(y)\rangle dy\\
&= r_j^{m - 1}\int_{B_1}\langle DF_\Psi(A_j + \lambda_j Dv_j(y)) - DF_\Psi(A_j), Dg(y)\rangle dy\\
& = \lambda_jr_j^{m - 1}\int_{B_1} \langle DF_j(Dv_j(y)),Dg(y)\rangle dy,
\end{align*}
where we used the compactness of the support of $g$ for passing from the second to the third equality and the definitions of $v_j$ and $F_j$ in the rest of the equalities. Now we can turn to the right hand side of \eqref{outervar}. From now on we will denote with $C_2$ a positive constant which may change line by line, but that shall always depend just on $n,m,p,\|\Psi\|_{C^2},C,\|u\|_{\Lip},k$:
\begin{align*}
\|\eta_j \A^{\frac 1{p'}}(Du)\|_{p'} \le C_2\|\eta_j\|_{p'} = C_2 \left (\int_{\Omega}\|\eta_j(x)\|^{p'}dx \right)^{\frac 1{p'}} = C_2r^{\frac m{p'}}_j \left (\int_{B_1}\|g(y)\|^{p'}dy\right)^{\frac 1{p'}},
\end{align*}
where we bounded $\A^{\frac 1{p'}}(Du) \le {C_2}$ using the fact that $u$ is Lipschitz.
These computations allows us to rewrite \eqref{outervar} as
\[
\lambda_jr_j^{m - 1}\int_{B_1} \langle DF_j(Dv_j(y)),Dg(y)\rangle dy \le C_1 C_2r^{\frac m{p'}}_j \left (\int_{B_1}\|g(y)\|^{p'}dy\right)^{\frac 1{p'}}.
\]
Dividing by $\lambda_j r_j^{m-1}$, we see that the right hand side becomes
\[
C_1 C_2\frac{r^{1-\frac m{p}}_j}{\lambda_j} \left (\int_{B_1}\|g(y)\|^{p'}dy\right)^{\frac 1{p'}} \overset{\eqref{contradict}}{\le} \lambda_j\frac{C_1 C_2}{\|H'\|_p}\left (\int_{B_1}\|g(y)\|^{p'}dy\right)^{\frac 1{p'}} ,
\]
that converges to $0$ since $\lambda_j^2 \to 0$. Of course, if $\|H'\|_{L^p}=0$, the previous computation cannot be performed, but in that case we see that the right-hand side of \eqref{outervar} is identically $0$, since the constant appearing in \eqref{outervar} satisfies $C_1(0) = 0$. To finish the proof, we need to show that
\[
\int_{B_1}\langle DF_j(Dv_j),Dg\rangle dx \to \int_{B_1}D^2F_\Psi(A)[Dv,Dg]dx = 0.
\]
To do so,  using the definition of $F_j$ we rewrite:
\[
DF_j(Dv_j) = \frac{DF_\Psi(A_j + \lambda_jDv_j) - DF_\Psi(A_j)}{\lambda_j} = \int_0^1D^2F_\Psi(A_j + s\lambda_jDv_j(x))Dv_j(x)ds.
\]
By assumption $Dv_j$ converges weakly to $Dv$, hence it suffices to prove  that
\[
d_j(x):= \int_0^1D^2F_\Psi(A_j + s\lambda_jDv_j(x))ds \to D^2F_\Psi(A)
\]
strongly in $L^2$ to conclude. First, by the definition of $v_j$, we infer that $\{\lambda_jv_j\}_j$ is an equilipschitz sequence, and hence that $\{A_j + s\lambda_jDv_j\}_{j}$ is a sequence equibounded in $L^\infty$. Furthermore, as $\{v_j\}_j$ is equibounded in $W^{1,2}$, $\{\lambda_jv_j\}_j$ converges to $0$ strongly in $L^2$, and we may assume (up to non-relabeled subsequences) that $\lambda_jDv_j \to 0$ pointwise a.e.. Then dominated convergence implies the convergence of $d_j$ in $L^p$ for every $p \in [1,+\infty)$, and hence we find that our claim \eqref{D2} holds.
\\
\\
Since $v$ is a weak solution of a linear elliptic systems with constant coefficients, see $\eqref{D2}$, then classical elliptic regularity theory, \cite{MORB}, yields: 
\begin{equation}\label{quant}
\sup_{B_{\frac{1}{2}}(0)}\|D^2v\|^2 \le C_2\fint_{B_1}\|D v\|^2 dx \overset{\eqref{L2bound}}{\leq} C_2.
\end{equation}
Furthermore, from \eqref{Caceqgraph} and H\"older inequality we get:
\begin{equation}\label{interm}
\begin{split}
\fint_{B_{r}(x_0)}&\|Du(x) - A\|^2dx\le \frac{C_2}{r^2}\fint_{B_{kr}(x_0)}\|u(x)-(u)_{x_0,kr} - A(x-x_0)\|^2dx + r^2C_2\fint_{B_{kr}(x_0)}\|H'\|^2dx\\
&\le \frac{C_2}{r^2}\fint_{B_{kr}(x_0)}\|u(x)-(u)_{x_0,kr} - A(x-x_0)\|^2dx + C_2\left [r^{1-\frac mp} \left (\int_{B_{kr}(x_0)}\|H'\|^pdx \right)^{\frac 1p}\right]^2.
\end{split}
\end{equation}
We rewrite \eqref{interm} in terms of $v_j$ choosing $r=\tau r_j,A = (Du)_{x_j,r_jk\tau}$, simply by rescaling, translating, dividing by $\lambda_j^2$ and estimating $H'$ with its $L^p$-norm:
\begin{equation}\label{Cacpoint}
\fint_{B_{\tau}}\|Dv_j - B_j\|^2dx\le \frac{C_2}{\tau^2}\fint_{B_{k\tau}}\|v_j -b_j - B_jx\|^2dx + \frac{C_2}{\lambda_j^2}\left (r_j^{1-\frac mp} \|H'\|_p\right)^2,
\end{equation}
where $b_j:=(\lambda_jr_j)^{-1}((u)_{x_j,k\tau r_j}-(u)_{x_j,r_j})=(v_j)_{k\tau}$ and $B_j = (Dv_j)_{k\tau}$.
By \eqref{contradict}, the last addendum converges to $0$ as $j \to \infty$. It is well-known that
\begin{equation}\label{average}
\fint_{B_{\tau}}\|Dv_j - E_j\|^2dx \le \fint_{B_{\tau}}\|Dv_j - B_j\|^2dx,
\end{equation}
and hence, denoting $B= (Dv)_{k\tau}$, we obtain:
\begin{align*}
c\tau^2 &\overset{\eqref{cine}}{\le} \limsup_j\fint_{B_\tau}\|Dv_j - E_j\|dx \overset{\eqref{average}}{\le}\limsup_j \fint_{B_{\tau}}\|Dv_j - B_j\|^2dx\\
&\overset{\eqref{Cacpoint}}{\le} \frac{C_2}{\tau^2}\limsup_j\left[\fint_{B_{k\tau}}\|v_j -b_j - B_jx\|^2dx + \frac{1}{\lambda_j^2}\left (r_j^{1-\frac mp} \|H'\|_p\right)^2 \right]\\
&\overset{\eqref{contradict}}{=} \frac{C_2}{\tau^2}\fint_{B_{k\tau}}\|v -b - B x\|^2dx \le C_2\fint_{B_{k\tau}}\|Dv-B\|^2dx
\overset{\eqref{quant}}{\le}  C_2 \tau^2,
\end{align*}
where the last line is obtained by Poincar\'e inequality, using that $(v_j)_{k\tau}=b_j$ implies $(v)_{k\tau}=b$.
Choosing $c> C_2$, we obtain the desired contradiction.
\end{proof}

Now we can finally give the proof of Theorem \ref{regteo}:

\begin{proof}[Proof of Theorem \ref{regteo}:]
We define
\[
\Omega_0 := \{x \in \Omega: \lim_{r\to 0}E(x,r) = 0\}.
\]
This set is of full measure in $\Omega$ as it contains all Lebesgue points of $Du$. We want to show that $\Omega_0$ is open and $Du_{|\Omega_0} \in C^{\alpha}$ for some $\alpha \in (0,1)$. In the ongoing proof, we fix $\tau \in (0, (4k)^{-1})$, where $k = 2(1+\|u\|_{\Lip})$, satisfying $c\tau^{1-\beta} < 1$ and $\tau^\beta < \frac{1}{8}$, where $c$ is the constant found in Proposition \ref{Tilt} and $\beta := 1-\frac{m}{p}$. Further, let $x_0 \in \Omega_0$, and define the auxiliary excess:
\[
F(s) := E(x_0,s) + \Lambda s^{\beta}\|H'\|_p, \quad \Lambda := \frac{1}{8\tau^k}.
\]
We choose $r > 0$ such that
\begin{equation}\label{Eeps}
E(x_0,r) \le F(r) < \eps(\tau),
\end{equation}
where $\eps$ is given by Proposition \ref{Tilt}. From now on $r$ and $\tau$ are fixed. If $r^{\beta}\|H'\|_p \le E(x_0,r)$, by Proposition \ref{Tilt} we also find
\[
F(\tau r) = E(x_0,\tau r) + \Lambda \tau^\beta r^{\beta}\|H'\|_p \overset{\eqref{decay}}{\le} c\tau^2E(x_0,r) + \Lambda \tau^\beta r^{\beta}\|H'\|_p \overset{c\tau^\beta < 1}{\le} \tau^\beta F(r).
\]
On the other hand, if $r^{\beta}\|H'\|_p > E(x_0,r)$, we have
\begin{align*}
F(\tau r) = E(x_0,\tau r)  + \Lambda\tau^\beta r^{\beta}\|H'\|_p &\le \frac{1}{\tau^k}E(x_0,r) + \Lambda\tau^\beta r^{\beta}\|H'\|_p < (\tau^{-k}r^\beta + \Lambda\tau^\beta r^\beta)\|H'\|_p\\
&= (\tau^{-k}\Lambda^{-1} + \tau^\beta)\Lambda r^{\beta}\|H'\|_p \overset{\Lambda= 8^{-1}\tau^{-k}}{\le} (8^{-1} + \tau^\beta)\Lambda r^{\beta}\|H'\|_p\\
& \overset{\tau^\beta < 8^{-1}}{\le} \frac{1}{4}\Lambda r^{\beta}\|H'\|_p \le \frac{1}{4}F(r).
\end{align*}
In particular, if $r$ satisfies \eqref{Eeps}, we always have $F(\tau r) \le \frac 14 F(r)$. This inequality allows us to iterate the reasoning with $\tau r$  instead of $r$ (notice in fact that $\tau r$ still satisfies \eqref{Eeps}). Hence we find, for every $\ell \in \N$,
\[
F(\tau^\ell r) \le 4^{-\ell} F(r), \quad \forall \ell \in \N.
\]
From this, one easily find the existence of $\alpha \in (0,1)$ such that
\[
F(R) \le c_1R^{2\alpha}, \quad \forall R \in (0,r),
\]
for some constant $c_1 > 0$ depending only on $r$ and $\tau$. Now the key observation is that, fixed $r > 0$, for points $x$ sufficiently close to $x_0$, one still has
\[
E(x,r) + \Lambda r^\beta\|H'\|_p < \eps(\tau),
\]
that is an easy consequence of the continuity of $x \mapsto E(x,r)$. Therefore, we find that there exists $\rho > 0$ such that
\[
E(x,R) + \Lambda R^\beta\|H'\|_p < c_1 R^{2\alpha}, \quad \forall x \in B_\rho(x_0), R \in (0,r).
\]
In particular, we infer
\[
E(x,R) < c_2 R^{2\alpha}, \quad \forall x \in B_\rho(x_0), R \in (0,r).
\]
This shows that $Du : B_{\rho}(x_0) \to \R^n$ is in a Campanato space, and it is well-know that this yields H\"older regularity for $Du$, see for instance \cite[Proof of Theorem 3.2]{CAM}, and this concludes the proof.
\end{proof}

\section{Compactness}\label{s:compactness}
Aim of this section is to prove Theorem \ref{D}. This will be obtained combining the following Theorems \ref{teo:comp0} and \ref{teo:comp}. In order to precisely state Theorem \ref{D}, we can use the notion of differential inclusions. In this way, Theorem \ref{D} is equivalent to say that the only Young measures generated by \emph{div-curl} inclusions supported in
\[
K_{F_\Psi} := \left\{A \in \R^{(2n + m)\times m}:
A =
\left(
\begin{array}{c}
X\\
DF_\Psi(X)\\
X^TDF_\Psi(X) - F_\Psi(X)\id
\end{array}
\right)
\right\}
\]
are trivial in the case $\Psi$ satisfies (AC). In particular, this answers \cite[Question 9]{DLDPKT} for $\Psi$ satisfying (AC). We will not enter in the details of the theory of differential inclusions, we refer the reader to \cite[Section 2]{DLDPKT} for a thorough explanation of the terminology.

\begin{Teo}\label{teo:comp0}
Let $\Psi\in C^1(\G,(0,\infty))$ and $F_\Psi$ as in \eqref{functional}. Let $\Omega \subset \R^m$ be open and bounded. Consider sequences $u_j: \Omega \to \R^n$, $A_j: \Omega \to \R^{n\times m}$, $B_j: \Omega \to \R^{m\times m}$ such that $u_j$ is equibounded in $W^{1,\infty}$ and $A_j,B_j$ are equibounded in $L^\infty$. Suppose further that $\dv A_j$ and $\dv B_j$ are equibounded in $L^1$. Define
\[
W_j := \left(\begin{array}{cc}Du_j\\ A_j \\ B_j\end{array}\right)
\]
and suppose $\dist(W_j,K_{F_\Psi}) \to 0$ pointwise a.e. as $j \to \infty$. Suppose further that $u_j \rightharpoonup u$ in $W^{1,2}(\Omega,\R^n)$. Then, the associated varifolds $V_j := \llbracket\Gamma_{u_j}\rrbracket$ converge in the sense of varifolds (i.e. weakly$^*$ as measures on $\Omega\times\R^n\times \mathbb{G}(N,m))$ to $V = \llbracket\Gamma_u\rrbracket$.
\end{Teo}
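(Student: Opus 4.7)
The plan is to combine Young-measure compactness for $Du_j$ with the rectifiability theorem of \cite{DDG} for varifolds with locally bounded anisotropic first variation, which requires precisely the (AC) assumption. Up to a non-relabeled subsequence, since $u_j$ is equi-Lipschitz, Ascoli--Arzel\`a gives uniform convergence $u_j \to u$ on $\overline{\Omega}$, while the fundamental theorem on Young measures produces a measurable family $\{\nu_x\}_{x \in \Omega}$ of probability measures on $\R^{n \times m}$ generated by $Du_j$. Combining these two convergences with the area formula, for every $\varphi \in C_c(\Omega \times \R^n \times \G)$ one obtains
\begin{equation*}
\langle V_j, \varphi\rangle = \int_\Omega \varphi(x, u_j(x), h(Du_j(x))) \A(Du_j(x))\, dx \;\longrightarrow\; \int_\Omega \int_{\R^{n\times m}} \varphi(x, u(x), h(X)) \A(X)\, d\nu_x(X)\, dx,
\end{equation*}
so $V_j \weak V_\infty$ in the sense of varifolds, and $\|V_\infty\|$ is concentrated on the $m$-rectifiable set $\Gamma_u$. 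The theorem will follow once we prove that $\nu_x = \delta_{Du(x)}$ for a.e. $x$: by \eqref{comp} this forces $Du_j \to Du$ in measure, and the above formula then specializes to $V_\infty = \llbracket \Gamma_u \rrbracket$.

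To upgrade $\nu_x$ to a Dirac mass, we show that $\delta_\Psi V_\infty$ is a Radon measure, so that the rectifiability theorem of \cite{DDG} may be invoked. Fix $g = (g^1,g^2) \in C^1_c(\Omega \times \R^n, \R^{n+m})$. By the well-known correspondence between the parametric integrand $\Psi$ and the polyconvex integrand $F_\Psi$, see e.g. \cite[Section 2]{DLDPKT}, the quantity $[\delta_\Psi V_j](g)$ can be rewritten, after pulling back to $\Omega$ via the graph map, as a sum of an outer-variation term involving $DF_\Psi(Du_j)$ tested against $D(g^2(\cdot,u_j))$ and an inner-variation term involving $Du_j^T DF_\Psi(Du_j) - F_\Psi(Du_j)\id$ tested against $D(g^1(\cdot,u_j))$. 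The pointwise hypothesis $\dist(W_j,K_{F_\Psi}) \to 0$ a.e., together with the $L^\infty$ bound on $W_j$ and the continuity of $DF_\Psi$, yields by dominated convergence $A_j - DF_\Psi(Du_j) \to 0$ and $B_j - (Du_j^T DF_\Psi(Du_j) - F_\Psi(Du_j)\id) \to 0$ in every $L^p$ with $p < \infty$. Substituting the polyconvex expressions by $A_j$ and $B_j$ and integrating by parts, we obtain a uniform estimate
\[
|[\delta_\Psi V_j](g)| \le C\|g\|_\infty\left(\|\dv A_j\|_{L^1} + \|\dv B_j\|_{L^1}\right) + o(1)\|g\|_{C^1},
\]
with $C$ depending on $\|u_j\|_{\Lip}$. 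Passing to the limit, $\delta_\Psi V_\infty$ is a Radon measure.

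The main theorem of \cite{DDG} then implies that $V_\infty$ is a rectifiable $m$-varifold. Since $\|V_\infty\|$ is supported on $\Gamma_u$, rectifiability forces the disintegration of $V_\infty$ with respect to $\|V_\infty\|$ to be the Dirac mass at the approximate tangent plane $h(Du(x))$ for a.e. point of $\Gamma_u$. Matching this information with the explicit formula for $V_\infty$ in the first paragraph and exploiting the injectivity of $h$ on $\R^{n \times m}$, we deduce that $\nu_x$ is itself a Dirac mass for a.e. $x$; combined with the weak convergence $Du_j \rightharpoonup Du$ in $L^2$, this Dirac must equal $\delta_{Du(x)}$, concluding the proof. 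The main technical difficulty in this plan is the bookkeeping in the second paragraph: the parametric first variation must be rewritten entirely in terms of $A_j$ and $B_j$, which requires a careful chain-rule computation involving the graph map $x \mapsto (x,u_j(x))$, and one must verify that all error terms genuinely vanish in the limit.
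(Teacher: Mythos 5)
Your proposal has a genuine gap at the rectifiability step: you invoke the rectifiability theorem of \cite{DDG}, which (as you correctly note) \emph{requires the atomic condition} $(AC)$. But Theorem~\ref{teo:comp0} only assumes $\Psi\in C^1(\G,(0,\infty))$; no $(AC)$ hypothesis is made, and indeed the whole point of keeping this theorem free of $(AC)$ is that $(AC)$ only enters downstream (in the combination with Theorem~\ref{teo:comp} that produces the Young-measure statement of Theorem~\ref{D}). Applying \cite{DDG} here is not justified.

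The paper circumvents this by a two-step argument you omit. After showing $[\delta_\Psi V]$ is a Radon measure (your estimate for this step is essentially the one in the paper --- compare \eqref{structure}--\eqref{structure1} and \cite[Lemma 7.3]{DLDPKT}), the paper establishes the \emph{lower density bound} $\theta_*^m(y,\|V\|)>0$ for $\|V\|$-a.e.\ $y$ (equations \eqref{ineqj}--\eqref{final2}), exploiting that $\pi_\#\|V_j\|\ge \mathcal{L}^m\llcorner\Omega$ since $\A(Du_j)\ge 1$. This density bound allows the invocation of \cite[Theorem 4.1]{DeR2016}, which yields rectifiability for $C^1$ integrands with Radon-measure anisotropic first variation \emph{when a density lower bound is available}, without any $(AC)$. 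Your argument is actually only a small step from being repairable: your Young-measure representation
\[
\|V_\infty\|\big(\pi^{-1}(E)\big)=\int_E\Big(\int_{\R^{n\times m}}\A(X)\,d\nu_x(X)\Big)\,dx\ \ge\ |E|,\qquad E\subset\Omega\text{ Borel},
\]
immediately gives the needed density bound (since $\A\geq 1$), so you could swap the citation from \cite{DDG} to \cite[Theorem 4.1]{DeR2016} and proceed. As a separate remark, your use of Young measures in the first paragraph is a genuinely different route from the paper's, which works directly with varifold weak-$*$ compactness and never introduces a Young measure in this proof; the Young-measure formulation is a reasonable alternative, but the overall architecture still stands or falls with the choice of rectifiability theorem, and as written the proof relies on a hypothesis that is not present.
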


\begin{Teo}\label{teo:comp}
Let $\Omega \subset \R^m$ be open and bounded. Let $u_j: \Omega \subseteq\R^m \to \R^n$ be a sequence of maps such that $u_j \rightharpoonup u$ in $W^{1,p}(\Omega,\R^n)$ and $D u_j \weak (\nu_x)_x$ in the sense of Young measures. Suppose the graphs $\llbracket\Gamma_{u_j}\rrbracket$ converge in the sense of varifolds to $\llbracket\Gamma_u\rrbracket$. Then $u_j$ converges to $u$ in the strong $W^{1,q}(\Omega,\R^n)$-topology for every $1\le q < p$. 

Conversely, if $u_j: \Omega \subseteq\R^m \to \R^n$ satisfies $u_j \to u$ strongly in $W^{1,m}(\Omega,\R^n)$, then there exists a subsequence $u_{j_k}$ such that $\llbracket \Gamma_{u_{j_k}}\rrbracket$ converges in the sense of varifolds to $\llbracket \Gamma_{u}\rrbracket$.
\end{Teo}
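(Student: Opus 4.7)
I will treat the two implications separately. For the forward direction (varifold convergence $\Rightarrow$ strong $W^{1,q}$ for $q<p$), the plan is to identify the Young measure: show $\nu_x = \delta_{Du(x)}$ for a.e.\ $x$, which by \eqref{comp} upgrades weak convergence to convergence in measure; combined with the $W^{1,p}$-weak convergence and Vitali's theorem (using equi-integrability of $\|Du_j\|^q$ for $q<p$), this yields strong $W^{1,q}$ convergence as claimed.

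To identify $\nu_x$, I would apply the varifold hypothesis to products $f(x,y,T) = \varphi(x) \eta(y) g(T)$ with $\varphi \in C_c^\infty(\Omega)$, $\eta \in C_c(\R^n)$, $g \in C(\G)$: the area formula rewrites this as
$$\int_\Omega \varphi(x)\eta(u_j(x))(g\circ h)(Du_j(x))\A(Du_j(x))\,dx \to \int_\Omega \varphi(x)\eta(u(x))(g\circ h)(Du(x))\A(Du(x))\,dx.$$
Rellich--Kondrachov gives $u_j \to u$ in $L^p$ and, up to a subsequence, a.e., while the equi-integrability of $\A(Du_j)$, which follows from $p>m$ (the natural setting, since $\A$ has growth $m$) and weak $W^{1,p}$-boundedness, allows us to let $\eta\nearrow 1$ in the previous display and deduce
$$\int_\Omega \varphi\,(g\circ h)(Du_j)\A(Du_j)\,dx \to \int_\Omega \varphi\,(g\circ h)(Du)\A(Du)\,dx.$$
Since $(g\circ h)\A$ has growth $m<p$, the Young-measure representation from Section \ref{s:notation} identifies the left-hand side with $\int_\Omega \varphi\langle\nu_x,(g\circ h)\A\rangle\,dx$. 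Matching the two limits and varying $\varphi$ and $g$, we obtain
$$\langle\nu_x,(g\circ h)\A\rangle = (g\circ h)(Du(x))\A(Du(x)), \qquad \text{for a.e.\ } x,\ \forall g \in C(\G).$$
Taking $g\equiv 1$ gives $\langle\nu_x,\A\rangle = \A(Du(x))$, so the rescaling $\tilde\nu_x := \A(\cdot)\,\nu_x/\A(Du(x))$ defines a probability measure on $\R^{n\times m}$ satisfying $\int (g\circ h)\,d\tilde\nu_x = g(h(Du(x)))$ for every $g \in C(\G)$, i.e.\ $h_\#\tilde\nu_x = \delta_{h(Du(x))}$. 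Since the chart $h$ defined in \eqref{h} is injective, this forces $\tilde\nu_x = \delta_{Du(x)}$, and positivity of $\A$ then yields $\nu_x = \delta_{Du(x)}$.

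For the converse, suppose $u_j \to u$ strongly in $W^{1,m}$. Extracting a subsequence, $u_j \to u$ and $Du_j \to Du$ pointwise a.e. For any $f \in C_c(\R^N \times \G)$, joint continuity of $f$ together with continuity of $h$ and $\A$ gives pointwise a.e.\ convergence of $f(x,u_j(x),h(Du_j(x)))\A(Du_j(x))$ to the corresponding limit. The integrands are dominated by $\|f\|_\infty \A(Du_j) \le C\|f\|_\infty(1+\|Du_j\|^m)$, and $\|Du_j\|^m \to \|Du\|^m$ in $L^1$, so the majorizing sequence is equi-integrable. Vitali's convergence theorem then yields convergence of the integrals, which by the area formula is precisely $\llbracket\Gamma_{u_{j_k}}\rrbracket \to \llbracket\Gamma_u\rrbracket$ in the sense of varifolds.

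The main obstacle is the cutoff-removal step in the forward direction, since $u_j$ is not $L^\infty$-equibounded; this is resolved by controlling $|\{|u_j|>R\}|$ via $L^p$-strong convergence of $u_j$ and controlling the integral $\int_{\{|u_j|>R\}}\A(Du_j)\,dx$ via equi-integrability of $\A(Du_j)$ in $L^{p/m}$, producing the uniform smallness as $R\to\infty$ needed to let the cutoff disappear.
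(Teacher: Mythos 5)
Your proof is correct, and the converse direction coincides with the paper's argument: extract a pointwise a.e.\ convergent subsequence and pass to the limit in $\int_\Omega f(x,u_j,h(Du_j))\A(Du_j)\,dx$ using that $\A(Du_j)\le C(1+\|Du_j\|^m)$ is equi-integrable because $\|Du_j\|^m\to\|Du\|^m$ in $L^1$. The forward direction, however, departs from the paper in two places. For removing the cutoff in the target variable, the paper simply invokes uniform convergence of $u_j$ to $u$ (automatic in the operative regime $p>m$ via Morrey's compact embedding, and in the application to Theorem~\ref{teo:comp0} from equi-Lipschitz bounds), whereas you go through Rellich compactness in $L^p$ together with a Chebyshev/H\"older estimate making $\int_{\{\|u_j\|>R\}}\A(Du_j)\,dx$ uniformly small; this is more involved but avoids appealing to $L^\infty$ control. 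For the identification of the Young measure, the paper tests only against the single matrix-valued integrand $f((x,y),T)=\eta(x)\psi(y)T$, obtains the vector identity $h(Du(x))\A(Du(x))=\int h(\Lambda)\A(\Lambda)\,d\nu_x(\Lambda)$, and then pairs with $h(Du(x))^\perp$ and uses the algebraic relation \eqref{idTS}, namely $\langle T,S^\perp\rangle=\tfrac12\|T-S\|^2$ on $\G$, to conclude immediately that $\int\|h(Du(x))-h(\Lambda)\|^2\A(\Lambda)\,d\nu_x(\Lambda)=0$ and hence $\nu_x=\delta_{Du(x)}$. Your route instead varies $g$ over all of $C(\G)$, renormalizes to $\widetilde\nu_x:=\A\,\nu_x/\A(Du(x))$, identifies $h_\#\widetilde\nu_x=\delta_{h(Du(x))}$, and then invokes injectivity of the chart $h$. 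Both arguments are valid; the paper's pairing trick is more economical because it needs only one test function, while yours is more structural and does not rely on any special quadratic identity of the Grassmannian embedding, at the price of a density/separability argument (to get the identity for all $g$ on a single full-measure set of $x$) that you should spell out.
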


\begin{proof}[Proof of Theorem \ref{teo:comp0}]

Since $u_j$ is equibounded in $W^{1,\infty}$, then $u_j \to u$ uniformly by Ascoli-Arzel\`a compactness criterion and $V_j$ is an equibounded sequence of measures. Therefore, up to extracting a subsequence, $V_j \weak V$. We need to prove that $V = \llbracket\Gamma_u\rrbracket$. First, we claim that 
\begin{equation}\label{claim1}
\spt(\|V\|) \subseteq \Gamma_u.
\end{equation} 
To see \eqref{claim1}, fix $y \in \R^N \setminus \Gamma_u$ and $r>0$ such that $B_r(y) \cap \Gamma_u = \emptyset$. Then, by the uniform convergence of $u_j$ to $u$, up to taking $j$ large enough
\[
\|V_j\|(B_r(y)) = 0.
\]
Since $\|V_j\|\weak \|V\|$, by lower semicontinuity we deduce that $\|V\|(B_r(y)) = 0$. In particular $y \in \R^N \setminus \spt(\|V\|)$ and we deduce \eqref{claim1}. 

We want to prove that the limit varifold $V$ is integer rectifiable. We wish to apply \cite[Theorem 4.1]{DeR2016}. 

To this aim, we show that $[\delta_\Psi V]$ is a Radon measure. This is an easy consequence of \cite[Lemma 7.3]{DLDPKT}, that yields for every $g = (g^1,\dots, g^{N})\in C^1_c(\Omega\times\R^n)$:
\begin{equation}\label{structure}
[\delta_{\Psi}(\llbracket\Gamma_{u_j}\rrbracket)](g) = \int_{\Omega}\langle B(Du_j(x)), D(g_1(x,u_j(x)))\rangle dx +\int_{\Omega}\langle A(Du_j(x)), D(g_2(x,u_j(x)))\rangle dx,
\end{equation}
where $g_1(x,y) := (g^1(x,y),\dots, g^m(x,y))$, $g_2(x,y) := (g^{m + 1}(x,y),\dots, g^{m + n}(x,y))$ and $A(X)$ and $B(X)$ are defined by $A(X) = DF_\Psi(X), B(X) = X^TDF_\Psi(X) - F_\Psi(X)\id$. Since $W \mapsto [\delta_{\Psi}(W)]$ is continuous with respect to the weak-$*$ convergence of varifolds, the left hand side of \eqref{structure} converges to $[\delta_{\Psi}(V)](g)$ as $j\to \infty$, while the right hand side can be rewritten as:
\begin{equation}\label{structure1}
\begin{split}
\int_{\Omega}\langle B(Du_j(x)) - B_j(x), D(g_1(x,u_j(x)))\rangle dx +\int_{\Omega}\langle A(Du_j(x))-A_j(x), D(g_2(x,u_j(x)))\rangle dx\\
+\int_{\Omega}\langle B_j(x), D(g_1(x,u_j(x)))\rangle dx +\int_{\Omega}\langle A_j(x), D(g_2(x,u_j(x)))\rangle dx.
\end{split}
\end{equation}
We may assume, up to passing to a non-relabeled subsequence, that $\dv B_j$ and $\dv A_j$ weakly-$*$ converge as Radon measures to $\mu$ and $\nu$, respectively. Therefore, taking the limit as $j\to \infty$, since $\dist(W_j,K_{F_\Psi}) \to 0$ pointwise a.e. and $g_1(x,u_j(x))-g_1(x,u(x))$, $g_2(x,u_j(x))-g_2(x,u(x))$ converge uniformly to $0$, then \eqref{structure1} converges to
\[
-\int_{\Omega}g_1(x,u(x))d\mu(x) -\int_{\Omega}g_2(x,u_j(x))d\nu(x).
\]
Hence, $[\delta_\Psi V]$ is a Radon measure.
\\
\\
Secondly, in order to apply \cite[Theorem 4.1]{DeR2016}, we need to prove that 
\begin{equation}\label{dens}
\theta^m_*(y, \|V\|)>0, \qquad \mbox{for $\|V\|$-a.e. $y \in \Omega\times\R^n$}.
\end{equation}
Let $\pi: \Omega\times \R^n \to \Omega$ be the projection map on the first factor. 
Now, fix a point $z \in \Omega$. For every $j \in \mathbb N$ and for every $ r \in (0,\dist(z,\partial\Omega))$ it holds
\begin{equation}\label{ineqj}
 \pi_\#\|V_j\|(B_r(z)) =\|V_j\|(\pi^{-1}(B_r(z)))= \int_{B_r(z)} \mathcal A(D u_j)(x)  dx \ge |B_r(z)|= r^m\omega_m.
\end{equation}
As $ \pi_\#\|V_j\|\weak \pi_\#\|V\|$, for every $r \in  (0,\dist(z,\partial\Omega))$ we compute
\begin{equation}\label{final1}
\frac{r^m}{2^m}\omega_m \overset{\eqref{ineqj}}{\leq} \limsup_j \pi_\#\|V_j\|(\overline{B_\frac{r}{2}(z)}) \le \pi_\#\|V\|(\overline{B_\frac{r}{2}(z)})\le \pi_\#\|V\|({B_r(z)}),
\end{equation}
where the second inequality can be found in \cite[Theorem 1.40]{EVG}. Denote $L := \|u\|_{\Lip}$, $ z = \pi(y)\in \Omega$ and fix any $0< r<\dist(z,\partial\Omega)$.  We can compute:
\[
\Gamma_u\cap \pi^{-1}\left(B_{\frac{r}{\sqrt{1 + L^2}}}(z)\right) = \Gamma_u \cap \left(B_{\frac{r}{\sqrt{1 + L^2}}}(z)\times \R^n\right) \subseteq \Gamma_u\cap{B_{r}(y)} 
\] 
Hence:
\begin{align*}
 \pi_\#\|V\|\left(B_{\frac{r}{\sqrt{1 + L^2}}}(z)\right) &= \|V\|\left(\pi^{-1}\left (B_{\frac{r}{\sqrt{1 + L^2}}}(z)\right)\right) = \|V\|\left (\Gamma_u\cap\pi^{-1}\left(B_{\frac{r}{\sqrt{1 + L^2}}}(z)\right)\right)\\
 &\le \|V\|(\Gamma_u\cap B_{r}(y)) = \|V\|(B_{r}(y)).
\end{align*}
This estimate, combined with $\eqref{final1}$, implies that there exists $c = c(m) > 0$ such that
\begin{equation}\label{final2}
cr^m   \le \pi_\#\|V\|\left(B_{\frac{r}{\sqrt{1 + L^2}}}(z)\right) \le  \|V\|(B_{r}(y)), \quad \forall 0< r<\dist(z,\partial\Omega).
\end{equation}
As remarked above, this is enough to conclude \eqref{dens}. Moreover \eqref{final2} implies that $\Gamma_u \subseteq \spt(\|V\|)$, 
which combined with \eqref{claim1} gives
\begin{equation}\label{final4}
\Gamma_u = \spt(\|V\|).
\end{equation}
We apply \cite[Theorem 4.1]{DeR2016} to deduce that $V$ is an integer rectifiable varifold. Moreover, $\eqref{final4}$ tells us that $V = (\Gamma_u,\theta)$. 
Exploiting the graphicality of the sequence of varifolds, we have the equality
\[
\llbracket\Omega\times \{0\}^n\rrbracket = \pi^{\#}V_j \weak \pi^{\#}V=(\Omega\times\{0\}^n,\theta \circ (\pi_{|\Gamma_u})^{-1}).
\]
This implies that
\begin{equation}\label{pfalto}
\pi^\#V = \llbracket\Omega\times \{0\}^n\rrbracket,
\end{equation}
and in particular that $\theta(y) = 1$ for $\mathcal{H}^m$-a.e. $y \in \spt(\|V\|)$.
\end{proof}

\begin{proof}[Proof of Theorem \ref{teo:comp}]
In order to prove that $u_j$ converges to $u$ in the strong $W^{1,q}(\Omega,\R^n)$-topology for every $1\le q < p$, by \eqref{comp}, it is sufficient to prove that $\nu_x = \delta_{D u(x)}$ for $\mathcal{H}^m$-a.e. $x \in \Omega$. To do so, we consider the function $h$ defined in \eqref{h}.
Note that
\[
\|h(\Lambda)\|_{\op} \le 1,\qquad \forall \Lambda \in \R^{n\times m},
\]
where $\|\cdot\|_{\op}$ is the operator norm of a linear application. This implies that
\[
\|h(\Lambda)\A(\Lambda)\|_{\op} \le  \A(\Lambda) \le C(1 + \|\Lambda\|^m), \qquad \forall \Lambda \in \R^{n\times m}.
\]
By the definition of Young measures, we have, for any $\eta\in C^{\infty}_c(\Omega)$,
\begin{equation}\label{YOUNG}
\int_{\Omega}h(D u_j)(x)\A(D u_j)(x)\eta(x) dx \to \int_{\Omega}\left(\int_{\R^{n\times m}}h(\Lambda)\A(\Lambda)d\nu_x(\Lambda)\right)\eta(x) dx.
\end{equation}
We choose the following function $f : \Omega\times\R^n\times \mathbb{G}(n + m,m) \to \R^{N\times N}$:
\[
f((x,y),T) := \eta(x)\psi(y)T,
\]
where $\psi \in C^{\infty}_c(\R^n)$ enjoys the following properties: 
\[
\psi \equiv 1 \text{ on } B_{2M}(0), \quad \mbox{ and } \quad \psi \equiv 0 \text{ on } \R^n \setminus B_{3M + 1}(0), \qquad \mbox{where $M = \max_{\spt(\eta)}\|u\|$}.
\]
With a slight abuse of notation, we will anyway write $V_j(f)$, even though $f$ is matrix-valued. This can be easily corrected considering the composition between $f$ and the function $a_{ij}: \R^{n\times m} \to \R$ that gives the $(i,j)$ component of the matrix. Since $V_j := \llbracket\Gamma_{u_j}\rrbracket \rightharpoonup V:=\llbracket\Gamma_{u}\rrbracket$ as varifolds, we deduce
\[
 \int_{\Gamma_{u_j}}T_x\Gamma_{u_j}\eta(x)\psi(y)d\mathcal{H}^m(x,y)= V_j(f) \to V(f) =  \int_{\Gamma_{u}}T_x\Gamma_{u}\eta(x)\psi(y) d\mathcal{H}^m(x,y),
\]
which, by the area formula \cite[Theorem 1.2]{MAL}, reads
\[
\int_{\Omega}h(D u_j(x))\A(D u_j(x))\psi(u_j(x))\eta(x) dx \to \int_{\Omega}h(D u(x))\A(D u(x))\psi(u(x))\eta(x) dx.
\]
By our choice of $\psi$ and the uniform convergence $u_j\to u$, we rewrite the previous limit as
\begin{equation}\label{YOUNG2}
\int_{\Omega}h(D u_j(x))\A(D u_j(x))\eta(x) dx \to \int_{\Omega}h(D u(x))\A(D u(x))\eta(x) dx.
\end{equation}
Combining \eqref{YOUNG2} with $\eqref{YOUNG}$, we deduce that for $\mathcal{H}^m$-a.e. $x \in \Omega$
\[
h(D u(x))\A(D u(x)) = \int_{\R^{n\times m}}h(\Lambda)\A(\Lambda)d\nu_x(\Lambda).
\]
Taking the inner product with $h(D u(x))^\perp$, we obtain
\begin{equation*}
\begin{split}
0&=\langle h(D u(x))^\perp, h(D u(x))\rangle \A(D u(x)) = \int_{\R^{n\times m}}\langle h(D u(x))^\perp, h(\Lambda)\rangle\A(\Lambda)d\nu_x(\Lambda)\\
&\overset{\eqref{idTS}}{=}\frac 12 \int_{\R^{n\times m}}\| h(D u(x))-h(\Lambda)\|^2\A(\Lambda)d\nu_x(\Lambda).
\end{split}
\end{equation*}
Since, $\A(\Lambda)>0$ for every $\Lambda\in \R^{n\times m}$, we deduce that
\begin{equation}
\nu_x = \delta_{D u(x)},\quad  \text{ for $\mathcal{H}^m$-a.e. } x \in \Omega
\end{equation}
as claimed. 
\\
\\
Conversely, if we have the strong convergence in $W^{1,m}$ of $u_j$ to $u$, we can also suppose, up to a non-relabeled subsequence, that $u_j$ and $D u_j$ converge pointwise a.e. to $u$ and $D u$ respectively. Therefore, for every $f \in C_c(\R^{N}\times \G)$,
\begin{align*}
\llbracket \Gamma_{u_j}\rrbracket(f) &=\int_{\Gamma_{u_j}}f(y,T_y\Gamma_{u_j})d\mathcal{H}^m(x)\\
& = \int_{\Omega}f(x,u_j(x),h(D u_j(x)))\A(D u_j(x)) dx \to \int_{\Omega}f(x,u(x),h(D u(x)))\A(D u(x)) dx\\
& = \int_{\Gamma_{u}}f(y,T_y\Gamma_{u})d\mathcal{H}^m(x) = \llbracket \Gamma_{u}\rrbracket(f),
\end{align*}
where the limit in the second line can be taken by the strong convergence of $u_j$ to $u$.
\end{proof}

\section{Example: \texorpdfstring{$l^p$}{lp} norms}\label{s:ex}

In this final section, we provide explicit examples of integrands $\Psi : \mathbb{G}(4,2) \to (0,\infty)$ satisfying (AC1). In order to do so, we use another well-know representation of $\mathbb{G}(4,2)$ in terms of the simple 2-vectors\footnote{More generally, one can identify $\mathbb{G}(N,m)$ with the space of simple $m$-vectors of $\R^N$, see \cite[Section 2.1]{MGS1}.}. Recall that $\Lambda_{2}(\R^{4})$ is the space of $2$-vectors of $\R^{4}$, i.e. the vector space given by finite linear combinations of elements of the form $v_{1}\wedge v_2$, with $v_i \in \R^{4}$.
We also let $\Lambda_{2}^s(\R^{4})\subset \Lambda_{2}(\R^{4})$ be the space of simple $2$-vectors, i.e. all elements $\tau \in \Lambda_{2}(\R^{4})$ such that
$\tau = v_1\wedge v_2.$ Let $\{e_1,e_2,e_3,e_4\}$ be a canonical basis of $\R^{4}$, then $\left\{e_i\wedge e_j: 1\le i < j \le 4\right\}$ is a \emph{canonical} basis of $\Lambda_{2}(\R^{4})$. The vector space $\Lambda_2(\R^{4})$ can be endowed with a scalar product that is defined on simple vectors as
\[
\langle v_1\wedge v_2,w_1\wedge w_2\rangle := \det(X),
\]
where $X \in \R^{2\times 2}$ is defined as $X_{ij} = (v_i,w_j)$. Consequently, we define $\|\tau\| := \sqrt{\langle \tau,\tau\rangle}$.
\\
\\
Non-zero simple vectors are in natural surjection with $\mathbb{G}(4,2)$, as for every non-zero $\tau = v_1\wedge v_2 \in \Lambda_{2}^s(\R^{4})$, one can associate the projection on $\spn\{v_1,v_2\}$. We will denote such a projection matrix onto $\spn\{v_1,v_2\}$ as $T(v_1\wedge v_2)$. For any even and $1$-homogeneous function $\mathcal{G}:\R^6 \to \R$, we can define $\Phi: \Lambda_2(\R^4) \to \R$ as:
\begin{equation}\label{GPLU}
\Phi(\tau) = \mathcal{G}(\langle v_1\wedge v_2, e_1\wedge e_2\rangle,\langle v_1\wedge v_2, e_1\wedge e_3\rangle,\langle v_1\wedge v_2, e_1\wedge e_4\rangle,\langle v_1\wedge v_2, e_2\wedge e_3\rangle,\langle v_1\wedge v_2, e_2\wedge e_4\rangle, \langle v_1\wedge v_2, e_3\wedge e_4\rangle).
\end{equation}
for $\tau = v_1\wedge v_2$. For ease of notation, denote $v_{ij} := \langle v_1\wedge v_2, e_i\wedge e_j\rangle$, for every $1\le i,j\le 4$. Notice that $v_{ij} = -v_{ji}$. Every such function $\Phi$ provides a well defined function $\Psi_\mathcal{G}: \mathbb{G}(4,2)\to \R$, simply by setting
\[
\Psi_\mathcal{G}(T) := \Phi\left(\frac{v_1\wedge v_2}{\|v_1\wedge v_2\|}\right),\quad \mbox{if $T = T(v_1\wedge v_2)$}.
\]
In this case, we will denote $T = T(v_1\wedge v_2)$. Notice that the evenness and 1-homogeneity of $\mathcal{G}$ imply that $\Psi$ is well-defined. 
\\
\\
The main result of this section is the following:

\begin{Teo}\label{Lp}
Let $p \in (1,+\infty)$, and let $\mathcal{G} = \|\cdot\|_{\ell^p}$ in $\R^6$. Then, $\Psi_{\mathcal{G}}$ satisfies (AC1).
\end{Teo}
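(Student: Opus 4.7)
The plan is to show that $\Psi_\mathcal{G}$ satisfies the scalar condition (SAC1) of Definition \ref{DEFAC1}, whence (AC1) will follow from Proposition \ref{PROOFAC1}. Decomposing any $w \in \R^4$ as $w = Tw + T^\perp w =: u + z$ and using the identity $B_\Psi(T) = \Psi(T) T + T^\perp d\Psi(T) T$, one computes
\[
\langle B_\Psi(T) w, w\rangle = \Psi(T) \|u\|^2 + \langle d\Psi(T) u, z\rangle,
\]
so that by Cauchy--Schwarz and the arithmetic--geometric mean inequality the (SAC1) inequality with parameter $\delta$ is equivalent to the off-diagonal operator-norm bound $\|T^\perp d\Psi(T) T\|_{\op} \le 2\sqrt{\delta(1+\delta)}\,\Psi(T)$. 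Since $m=2$ here, Proposition \ref{PROOFAC1} applies provided $\delta<1$, so it suffices to establish the uniform gradient estimate
\[
\sup_{T \in \mathbb{G}(4,2)} \frac{\|T^\perp d\Psi(T)T\|_{\op}}{\Psi(T)} < 2\sqrt{2}.
\]

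Next I would exploit the Plücker identification: for a unit simple $2$-vector $\tau\in \Lambda_2^s(\R^4)\cap S^5\subset \R^6$ one has $\Psi_\mathcal{G}(T(\tau)) = \|\tau\|_{\ell^p}$. Since $\mathbb{G}(4,2)$ embeds as a submanifold of $S^5$, the matrix $T^\perp d\Psi(T)T$ represents the intrinsic gradient of $\Psi$ on the Grassmannian, and its operator norm is controlled, up to explicit dimensional factors, by the spherical gradient on $S^5$ of the $1$-homogeneous extension $F(\tau):=\|\tau\|_{\ell^p}$ on $\R^6$. By Euler's identity $\langle \nabla F(\tau),\tau\rangle = F(\tau)$, at any $\tau\in S^5$ one finds
\[
\|\nabla_{S^5} F(\tau)\|^2 = \|\nabla F(\tau)\|_{\ell^2}^2 - F(\tau)^2 = \frac{\sum_{i=1}^6 |\tau_i|^{2(p-1)}}{\|\tau\|_{\ell^p}^{2(p-1)}} - \|\tau\|_{\ell^p}^2,
\]
so the required bound reduces to an explicit $\ell^p$--$\ell^{2(p-1)}$ inequality of the form
\begin{equation}\label{eq:ellp_ineq_plan}
\sum_{i=1}^6 |\tau_i|^{2(p-1)} < C_0\,\|\tau\|_{\ell^p}^{2p}, \qquad \forall \tau\in \Lambda_2^s(\R^4)\cap S^5,
\end{equation}
for an absolute constant $C_0$.

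To verify \eqref{eq:ellp_ineq_plan} I would set $\sigma_i := \tau_i^2$ in the unit simplex of $\R^6$ and analyse the ratio $R(\sigma) := \sum_i \sigma_i^{p-1}/(\sum_i \sigma_i^{p/2})^2$. Cauchy--Schwarz gives $R(\sigma)\ge 1$, with equality only when $\sigma$ is uniform on its support; a Lagrange-multiplier analysis shows that maximizers of $R$ on the simplex are supported on at most two distinct nonzero values, whose two-parameter families can then be estimated directly. The Plücker relation
\[
v_{12} v_{34} - v_{13} v_{24} + v_{14} v_{23} = 0
\]
enters here crucially: three signed products of equal magnitude cannot sum to zero, forcing $\|\tau\|_{\ell^\infty}>1/\sqrt 6$ for every unit simple $2$-vector, and thus providing the uniform slack needed in the asymptotic regime $p\to+\infty$ where $R(\sigma)\to 1/\max_i \sigma_i$.

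The main technical obstacle will be the sharp inequality \eqref{eq:ellp_ineq_plan}: naive comparisons of $\ell^p$ norms on $\R^6$ alone fall short in the extremal regimes $p\to 1^+$ and $p\to+\infty$, where vectors nearly uniform on $S^5$ would saturate the bound. The Plücker constraint must be used quantitatively to exclude these configurations and yield the uniform gap below $C_0$ required for (SAC1) with $\delta<1$.
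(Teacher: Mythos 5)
Your plan is legitimate and genuinely different from the paper's. The paper fixes an arbitrary even probability measure $\mu$ on unit simple $2$-vectors, writes out the $2\times 2$ principal subminors $S_{ij}$ of $A(\mu)$ in the Pl\"ucker coordinates, and shows via H\"older that at least one $\det S_{ij}>0$ --- this is an argument at the level of averaged operators, and it only yields (AC1). You instead try to prove the \emph{pointwise, strictly stronger} condition (SAC1) (so that (AC1) follows from Proposition~\ref{PROOFAC1}), which if true would be new information about the $\ell^p$ integrands.

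Your reduction steps are all correct. The identity
\[
\left\langle B_\Psi(T)w,w\right\rangle=\Psi(T)\|u\|^2+\left\langle T^\perp d\Psi(T)T\,u,z\right\rangle,\qquad u=Tw,\ z=T^\perp w,
\]
together with the optimization over $w$ with $\|w\|=1$, shows that (SAC1) with parameter $\delta$ is equivalent to $\|T^\perp d\Psi(T)T\|_{\op}\le 2\sqrt{\delta(1+\delta)}\,\Psi(T)$. The metric comparison you gloss over (\emph{``up to explicit dimensional factors''}) actually comes out with \emph{no} spurious constant: the Pl\"ucker embedding scales the Grassmannian metric by $1/\sqrt 2$, the symmetrization $T \mapsto \tfrac12\big(T^\perp d\Psi T + (T^\perp d\Psi T)^t\big)$ contributes a compensating $\sqrt 2$ (the two summands have Frobenius-orthogonal supports), and one obtains
\[
\|T^\perp d\Psi(T)T\|_{\op}\ \le\ \|T^\perp d\Psi(T)T\|_{\mathrm{Frob}}\ =\ \big\|\nabla^{\mathrm{Pl\ddot ucker}}\Phi(\tau)\big\|\ \le\ \big\|\nabla_{S^5}F(\tau)\big\|,
\]
which is exactly the Euler-formula quantity in your display. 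So the inequality you need is $R(\sigma) := \sum_i\sigma_i^{p-1}/\big(\sum_i\sigma_i^{p/2}\big)^2 < 9$ on the probability simplex $\Delta^5$.

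Here your assessment is too pessimistic. You worry that the bound fails near $p\to 1^+$ and $p\to+\infty$ on $S^5$ and that the Pl\"ucker constraint must rescue it. In fact $R(\sigma)\le 6$ holds on the \emph{entire} simplex, without Pl\"ucker, by elementary estimates. For $1<p\le 2$, H\"older with exponent $\tfrac{p/2}{p-1}\ge 1$ gives $\sum\sigma_i^{p-1}\le\big(\sum\sigma_i^{p/2}\big)^{2(p-1)/p}\,6^{(2-p)/p}$, and $\sum\sigma_i^{p/2}\ge\sum\sigma_i=1$, hence $R\le 6^{(2-p)/p}\le 6$. For $p\ge 2$, since $\sigma_i\le M:=\max_i\sigma_i$ one has $\sigma_i^{p-1}=\sigma_i^{p/2}\sigma_i^{p/2-1}\le M^{p/2-1}\sigma_i^{p/2}$, and $\sum\sigma_i^{p/2}\ge M^{p/2}$, so $R\le M^{p/2-1}/\sum\sigma_i^{p/2}\le 1/M\le 6$. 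Thus $\|T^\perp d\Psi T\|_{\op}/\Psi\le\sqrt{R-1}\le\sqrt 5<2\sqrt2$, and (SAC1) holds with $\delta=\tfrac{\sqrt6-1}{2}<1$; no Lagrange-multiplier analysis and no rigidity of the Pl\"ucker quadric are needed. Your overall strategy is therefore sound and, once the constant tracking and the $R\le 6$ bound are inserted, proves the claim --- and in fact the stronger statement that $\Psi_{\mathcal G}$ satisfies (SAC1). The trade-off versus the paper's proof: the paper's minor argument is self-contained in the varifold framework and proves only what is needed, while your route is geometrically cleaner (an $L^\infty$ gradient bound on $\G$) and yields a stronger conclusion, at the cost of the metric bookkeeping you left implicit.
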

Notice that in the case $p = 2$, we recover the area functional. To prove the previous theorem, first we need to understand how the (AC) condition \eqref{atomica} reads with respect to these coordinates. We have:
\begin{lemma}\label{Beq}
Let $v_1,v_2 \in \R^4$ be linearly independent. Then
\[
B_{\Psi_{\mathcal{G}}}(T(v_1\wedge v_2)) = B_{\mathcal{G}}\left(\frac{v_1\wedge v_2}{\|v_1\wedge v_2\|}\right),
\]
where, for all $1\le a,b \le 4$, denoting with $\partial_{(ab)}\mathcal{G}$ the partial derivative of $\mathcal{G}$ in the component $(ab)$,
\begin{equation}\label{components}
(B_{\mathcal{G}})_{ab}(v_1\wedge v_2) := \sum_{a < j} \partial_{(aj)}\mathcal{G}(v_{12},v_{13},v_{14},v_{23},v_{24},v_{34})v_{bj} + \sum_{a > j} \partial_{(ja)}\mathcal{G}(v_{12},v_{13},v_{14},v_{23},v_{24},v_{34})v_{jb}.
\end{equation}
\end{lemma}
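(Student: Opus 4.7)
The plan is to establish the identity by a direct chain-rule computation, realizing the tangent derivative appearing in \eqref{eq:B} via a suitable variation of the defining pair $V=(v_1|v_2)$. First, since the LHS depends only on $\spn\{v_1,v_2\}$ and the RHS is $0$-homogeneous under rescaling $(v_1,v_2)\mapsto(\lambda v_1,\lambda v_2)$ (the $\partial_{(ij)}\mathcal{G}$ are $0$-homogeneous by $1$-homogeneity of $\mathcal{G}$, and the prefactors $v_{bj}$, $v_{jb}$ carry the residual $1$-homogeneity needed to match $\|\tau\|^{-1}$ on the LHS), one may assume $\{v_1,v_2\}$ orthonormal. Under this normalization one has the classical Pl\"ucker expression
\[
T_{ab} \;=\; \sum_{j=1}^{4} v_{aj}\,v_{bj}, \qquad v_{ji}=-v_{ij},\ v_{ii}=0,
\]
which follows from $T = v_1 v_1^t + v_2 v_2^t$ by direct expansion using $\|v_1\|=\|v_2\|=1$ and $v_1\cdot v_2=0$.

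Next, since $B_{\Psi}(T)$ depends only on the values of $\Psi$ on $\mathbb{G}(4,2)$, I would compute $\langle D\Psi_\mathcal{G}(T),\,T^\perp L T + (T^\perp L T)^t\rangle$ as $\tfrac{d}{ds}\big|_{s=0}\Psi_\mathcal{G}(T(s))$ along the curve $T(s)=V(s)(V(s)^tV(s))^{-1}V(s)^t$ generated by $V(s)=V+sN$ with the specific choice $N:=T^\perp L V$. A short check shows $V(s)^tV(s)=I+O(s^2)$ (because $T^\perp V=0$), so the Pl\"ucker coordinates $v_{ij}(s)$ of $V(s)$ normalize to themselves at first order and
\[
\dot v_{ij}(0) \;=\; (v_1)_i N_{j,2}-(v_1)_j N_{i,2}+N_{i,1}(v_2)_j-N_{j,1}(v_2)_i \;=\; T^\perp_{ia}\,v_{bj}+T^\perp_{ja}\,v_{ib}
\]
once one tests with $L=e_a\otimes e_b$ and uses $N_{i,k}=T^\perp_{ia}(v_k)_b$ together with the definition of $v_{bj},v_{ib}$. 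Combined with $\tfrac{d}{ds}\big|_0\Psi_\mathcal{G}=\sum_{i<j}\partial_{(ij)}\mathcal{G}\cdot\dot v_{ij}(0)$, this yields an explicit formula for $(B_{\Psi_\mathcal{G}}(T))_{ab}=\Psi_\mathcal{G}(T)T_{ab}+\sum_{i<j}\partial_{(ij)}\mathcal{G}\,[T^\perp_{ia}v_{bj}+T^\perp_{ja}v_{ib}]$.

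Finally, I would split $T^\perp_{ia}=\delta_{ia}-T_{ia}$. The $\delta$-part yields, after a relabeling of dummy indices, exactly the desired sum $\sum_{a<j}\partial_{(aj)}\mathcal{G}\,v_{bj}+\sum_{a>j}\partial_{(ja)}\mathcal{G}\,v_{jb}$; it then remains to show that the $T_{ia}$-part cancels the remaining $\Psi_\mathcal{G}(T)T_{ab}$. The main obstacle is precisely this cancellation: writing $T_{ia}=\sum_k v_{ik}v_{ak}$ and pulling out $v_{ak}$, it relies on the Pl\"ucker quadratic relation $v_{ij}v_{kl}+v_{ik}v_{lj}+v_{il}v_{jk}=0$ (simplicity of $\tau$), which collapses $v_{ik}v_{bj}+v_{jk}v_{ib}$ to $v_{ij}v_{bk}$, and on Euler's identity $\mathcal{G}(\tau)=\sum_{i<j}v_{ij}\partial_{(ij)}\mathcal{G}(\tau)$ for the $1$-homogeneous $\mathcal{G}$; together with $T_{ab}=\sum_k v_{ak}v_{bk}$ they reassemble the offending contribution exactly as $\Psi_\mathcal{G}(T)T_{ab}$, completing the identification.
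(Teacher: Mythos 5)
Your proof is correct but takes a genuinely different route from the paper's. The paper invokes \cite[Lemma A.1]{HRT} to express the anisotropic first variation in the $2$-vector coordinates as \eqref{varcoor}, defines $B_{\mathcal{G}}$ implicitly through the pairing \eqref{linapp}, expands that pairing to get \eqref{components}, and then deduces $B_{\Psi_{\mathcal{G}}}=B_\mathcal{G}$ from the equality $[\delta_{\Psi}V]=[\delta_{\Phi}V]$ together with the general identity \eqref{varB} -- so the bridge between the two objects is the first-variation formula for varifolds. You instead work pointwise on the integrand: you realize the tangent direction $T^\perp L T+(T^\perp L T)^t$ in \eqref{eq:B} along the explicit curve $T(s)=V(s)(V(s)^tV(s))^{-1}V(s)^t$ with $V(s)=V+sT^\perp L V$, using $T^\perp V=0$ to control the normalization of the Pl\"ucker coordinates to second order. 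Splitting $T^\perp=\id-T$ then produces \eqref{components} from the $\delta$-part, and the cancellation of the remaining terms against $\Psi_\mathcal{G}(T)T_{ab}$ follows from the quadratic Pl\"ucker relation $v_{ij}v_{bk}-v_{ib}v_{jk}+v_{ik}v_{jb}=0$ (simplicity of $\tau$) and Euler's identity for the $1$-homogeneous $\mathcal{G}$ -- both facts that the paper's route hides inside the cited lemma and inside the ``simple but lengthy computations'' after \eqref{linapp}. Your approach is more self-contained and makes the algebraic mechanism transparent; the paper's approach is shorter once \cite{HRT} is granted and keeps the parallel with the varifold-theoretic origin of $B_\Psi$ explicit. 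Both are sound; your cancellation step, in particular, checks out exactly as you describe.
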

\begin{proof}
By \cite[Lemma A.1]{HRT}, one can compute the variation of a varifold\footnote{The computation in \cite{HRT} is actually carried out for currents, but the evenness of $\mathcal{G}$ allows us to immediately extend it to varifolds.} $V = (\Gamma,\theta)$:
\begin{equation}\label{varcoor}
\begin{split}
[\delta_\Phi V](g) &= \sum_{i = 1}^2\int_{\Gamma}\langle \nabla\Phi(w_1\wedge w_2(x)),Dg(x)w_1(x)\wedge w_2(x) + w_1(x)\wedge Dg(x)w_2(x)\rangle \theta(x)d\mathcal{H}^2(x),
\end{split}
\end{equation}
where $\spn\{w_1(x),w_2(x)\}$ is the approximate tangent space to $\Gamma$ at $\mathcal{H}^m\llcorner \Gamma$-a.e. $x$, $\|w_1(x)\wedge w_2(x)\|=1$ at $\mathcal{H}^m\llcorner \Gamma$-a.e. $x$, and 
\[
\nabla \Phi(v_1\wedge v_2) = \sum_{i < j}\partial_{(ij)}\mathcal{G}(v_{12},v_{13},v_{14},v_{23},v_{24},v_{34}) e_i\wedge e_j.
\]
We define $B_{\mathcal{G}}(\tau)$ by requiring that for every $L \in \R^{4\times 4}$:
\begin{equation}\label{linapp}
\langle \nabla\Phi(v_1\wedge v_2), (Lv_1)\wedge v_2 + v_1\wedge (Lv_2)\rangle = \langle B_{\mathcal{G}}(v_1\wedge v_2), L\rangle.
\end{equation}
Some simple but lengthy computations yield that for $1\le a,b \le 4$, $B_{\mathcal{G}}(v_1\wedge v_2)$ has components given by \eqref{components}. Through \eqref{linapp}, we write
\[
[\delta_\Phi V](g) = \int_{\Gamma}\langle B_{\mathcal{G}}(w_1\wedge w_2),Dg(x)\rangle\theta(x)dx.
\]
Since $\Psi_{\mathcal{G}}(T(v_1\wedge v_2)) = \Phi\left(\frac{v_1\wedge v_2}{\|v_1\wedge v_2\|}\right)$, it is simple to see that
\[
[\delta_\Psi V](g)=[\delta_\Phi V](g), \qquad \forall g \in C_c^{1}(\R^N,\R^N).
\]
By \eqref{varB} and the arbitrarity of $V$ and $g$, this yields $B_\Psi(T(v_1\wedge v_2)) = B_\mathcal{G}(v_1\wedge v_2)$, and hence proves the Lemma.
\end{proof}
Finite measures on $\mathbb{G}(4,2)$ and even measures on the space
\[
G := \{v_1\wedge v_2 \in \Lambda^s_2(\R^4): \|v_1\wedge v_2\| = 1\}
\]
are in bijective correspondence, through the map $v_1\wedge v_2 \mapsto T(v_1\wedge v_2)$. This consideration and the previous Lemma imply the following:
\begin{lemma}\label{char}
The map $\Psi_{\mathcal{G}}: \mathbb{G}(4,2) \to \R$ satisfies (AC1) if and only for every even probability measure $\mu$ on $G$,
\[
\dim\Ker \int_G B_\mathcal{G}(v_1\wedge v_2)d\mu(v_1\wedge v_2)  \le 2.
\]
The map $\Psi_{\mathcal{G}}: \mathbb{G}(4,2) \to \R$ satisfies (AC2) if and only if for every even probability measure $\mu$ on $G$,
\[
\dim\Ker \int_G B_\mathcal{G}(v_1\wedge v_2)d\mu(v_1\wedge v_2)  = 2\qquad  \Longleftrightarrow \qquad \mu = \frac{\delta_{\tau_0} + \delta_{-\tau_0}}{2} \quad \mbox{for some $\tau_0 = v_1^0\wedge v_2^0 \in G$}.
\]
\end{lemma}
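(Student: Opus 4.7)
\textbf{Proof plan for Lemma \ref{char}.} The whole lemma is essentially a transfer statement: it rewrites the atomic conditions, originally formulated for probability measures on $\mathbb{G}(4,2)$, in terms of even probability measures on the unit sphere $G$ of simple $2$-vectors. So my plan is to set up explicitly the natural bijection between the two spaces of probability measures, show that the defining integral matrices $A(\tilde\mu)$ and $\int_G B_\mathcal{G}(\tau)\,d\mu(\tau)$ agree under this bijection, and then translate (AC1) and (AC2) one by one.

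First I would fix the quotient map $q:G\to\mathbb{G}(4,2)$, $q(\tau)=T(\tau)$, which is a continuous $2$-to-$1$ covering with fiber $\{\tau,-\tau\}$ (since two unit simple $2$-vectors span the same plane iff they differ by a sign). I would then observe that pushing forward gives a bijection
\[
\{\text{even probability measures on }G\}\ \longleftrightarrow\ \{\text{probability measures on }\mathbb{G}(4,2)\},\qquad \mu\mapsto q_\#\mu=:\tilde\mu,
\]
with inverse obtained by lifting $\tilde\mu$ and symmetrizing, i.e.\ placing mass $\tilde\mu(\{T\})/2$ on each of the two preimages of $T$ (more formally, using a measurable section of $q$ and then averaging with the involution $\tau\mapsto-\tau$). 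The key point to check next is that $B_\mathcal{G}:G\to\R^{4\times 4}$ is even in $\tau$: the map $\tau\mapsto -\tau$ sends every $v_{ij}$ to $-v_{ij}$, and since $\mathcal{G}$ is even its partial derivatives are odd and homogeneous of degree zero, so each summand in \eqref{components} is the product of two sign changes and is therefore invariant. Hence $B_\mathcal{G}$ descends to a well-defined function on $G/\{\pm 1\}\cong\mathbb{G}(4,2)$ and, combining this with Lemma \ref{Beq}, one has
\[
A(\tilde\mu)=\int_{\mathbb{G}(4,2)}B_{\Psi_\mathcal{G}}(T)\,d\tilde\mu(T)=\int_G B_\mathcal{G}(\tau)\,d\mu(\tau),
\]
whenever $\mu$ is the even lift of $\tilde\mu$.

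With this identification in hand, the first statement of the lemma is immediate: $\dim\ker A(\tilde\mu)\le 2$ for every probability $\tilde\mu$ on $\mathbb{G}(4,2)$ if and only if the same holds with $A(\tilde\mu)$ replaced by $\int_G B_\mathcal{G}(\tau)d\mu(\tau)$ and $\tilde\mu$ replaced by the even $\mu$, since the bijection exhausts both classes of probability measures. Since the ambient codimension is $n=N-m=4-2=2$, this is exactly the translation of (AC1). For (AC2), note that $\tilde\mu=\delta_{T_0}$ if and only if its even lift is $\mu=\tfrac12(\delta_{\tau_0}+\delta_{-\tau_0})$ for (either choice of) $\tau_0\in q^{-1}(T_0)$; this is clear from the bijection. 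So the implication in Definition \ref{atomica}(AC2), read through the correspondence, becomes exactly the equivalence
\[
\dim\ker\!\int_G B_\mathcal{G}(\tau)d\mu(\tau)=2\ \Longleftrightarrow\ \mu=\tfrac12(\delta_{\tau_0}+\delta_{-\tau_0}),
\]
which is what the second part of the lemma asserts.

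The only nontrivial step is the evenness of $B_\mathcal{G}$, which is a short computation directly from formula \eqref{components} and the homogeneity/evenness of $\mathcal{G}$. Everything else is bookkeeping about the double cover $G\to\mathbb{G}(4,2)$, so I do not expect any real obstacle; I would just need to take some care that the $\pm\tau_0$ symmetrization in (AC2) is stated (and used) correctly, and that the lift-and-symmetrize construction is genuinely measurable, which follows from the existence of a Borel section of the double cover $q$.
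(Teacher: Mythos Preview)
Your proposal is correct and follows exactly the approach the paper indicates: the paper does not give a detailed proof of this lemma but simply notes before its statement that the bijection between even measures on $G$ and measures on $\mathbb{G}(4,2)$, combined with Lemma~\ref{Beq}, yields the result. Your write-up is a careful unpacking of that one sentence; the only minor imprecision is that (AC2) is literally a one-way implication in Definition~\ref{atomica}, so the ``$\Longleftarrow$'' direction of the displayed equivalence is not a translation of (AC2) but is automatic for positive integrands (as observed in the proof of Proposition~\ref{PROOFAC2}).
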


\begin{proof}[Proof of Theorem \ref{Lp}]
Fix $p \in (1,+ \infty)$. We use the characterization given by Lemma \ref{char}: given an even probability measure $\mu$ on $G$, we denote
\[
A(\mu) = \int B_{\mathcal{G}}(v_1\wedge v_2) d\mu(v_1\wedge v_2).
\]
We need to show that $\rank (A(\mu)) \ge 2$. First of all, by formula \eqref{components}, we have
\[
(B_{\mathcal{G}})_{ab}(v_1\wedge v_2) = \frac{\sum_{a < j} \sign(v_{aj})|v_{aj}|^{p - 1}v_{bj} + \sum_{a > j}  \sign(v_{ja})|v_{ja}|^{p - 1}v_{jb}}{\mathcal{G}^{p - 1}(v_{12},v_{13},v_{14},v_{23},v_{24},v_{34})} = \frac{\sum_{j} \sign(v_{aj})|v_{aj}|^{p - 1}v_{bj}}{\mathcal{G}^{p - 1}(v_{12},v_{13},v_{14},v_{23},v_{24},v_{34})}.
\]
Motivated by this expression, we define the even finite measure $\mu'$ on $G$:
\[
\mu'(f) = \int_{\mathbb{G}(4,2)}\frac{f(v_1\wedge v_2)}{\Phi^{p -1}(v_1\wedge v_2)}d\mu(v_1\wedge v_2),\qquad \mbox{for all $f \in C(\mathbb{G}(4,2))$}.
\]
 The goal now is therefore to show that
\[
A(\mu) = \sigma(\mu') := \int B_{\mathcal{G}}(v_1\wedge v_2)\Phi^{p -1}(v_1\wedge v_2) d\mu'
\]
cannot have rank smaller than $2$. For ease of notation, denote
\[
a_{ij}:= \int |v_{ij}|^pd\mu',
\]
for all $i,j \in \{1,\dots,4\}$. We observe that $a_{ij}= a_{ji}$ if $i > j$ and $a_{ii} = 0$. The principal $2\times 2$ subminor $S_{ij}$ of $\sigma(\mu')$, i.e. the submatrix of $\sigma(\mu')$ obtained by considering only the $i$-th and $j$-th rows and columns, are:
\[
S_{ij}= \left(\begin{array}{cc} a_{1i}+ a_{2i} + a_{3i} + a_{4i} & \sum_{s = 1}^4\displaystyle\int \sign(v_{is})|v_{is}|^{p-1}v_{js}d\mu'\\ \sum_{s = 1}^4\displaystyle\int \sign(v_{js})|v_{js}|^{p-1}v_{is}d\mu' & a_{1j}+ a_{2j} + a_{3j} + a_{4j} \end{array}\right).
\]
It suffices to prove that at least one of the $S_{ij}$ must be invertible to conclude the proof. We first use H\"older inequality to estimate
\begin{align*}
|(S_{ij})_{21}|&=\left|\sum_{s = 1}^4\int \sign(v_{js})|v_{js}|^{p-1}v_{is}d\mu'\right|=  \left|\sum_{s\neq i,s\neq j}\int \sign(v_{js})|v_{js}|^{p-1}v_{is}d\mu'\right|\\
& \le \sum_{s\neq i,s\neq j}\left(\int|v_{js}|^pd\mu'\right)^\frac{p-1}{p}\left(\int|v_{is}|^pd\mu'\right)^\frac{1}{p}  = \sum_{s\neq i,s\neq j}a_{js}^{\frac{p-1}{p}}a_{is}^\frac{1}{p}
\end{align*}
and analogously,
\begin{align*}
|(S_{ij})_{12}|=\left|\sum_{s = 1}^4\int \sign(v_{is})|v_{is}|^{p-1}v_{js}d\mu'\right| \le \sum_{s\neq i,s\neq j}a_{is}^{\frac{p-1}{p}}a_{js}^\frac{1}{p}.
\end{align*}
Thus, we can estimate:
\begin{equation}\label{S}
\begin{split}
\det(S_{ij}) &= (S_{ij})_{11}(S_{ij})_{22}-(S_{ij})_{12}(S_{ij})_{21} \ge (S_{ij})_{11}(S_{ij})_{22}-|(S_{ij})_{21}(S_{ij})_{12}| \\
&\ge \sum_{s \neq i}a_{is}\sum_{s\neq j}a_{js} - \sum_{s\neq i,s\neq j}a_{is}^\frac{p-1}{p}a_{js}^\frac{1}{p}\sum_{s\neq i,s\neq j}a_{js}^\frac{p-1}{p}a_{is}^\frac{1}{p}.
\end{split}
\end{equation}
We use again H\"older inequality to write
\begin{equation}\label{in}
\sum_{s\neq i,s\neq j}a_{is}^\frac{p-1}{p}a_{js}^\frac{1}{p} \le \left(\sum_{s\neq i,s\neq j}a_{is}\right)^\frac{p - 1}{p}\left(\sum_{s\neq i,s\neq j}a_{js}\right)^\frac{1}{p} \le \left(\sum_{s\neq i}a_{is}\right)^\frac{p - 1}{p}\left(\sum_{s\neq j}a_{js}\right)^\frac{1}{p}
\end{equation}
and analogously
\begin{equation}\label{in1}
\sum_{s\neq i,s\neq j}a_{is}^\frac{1}{p}a_{js}^\frac{p-1}{p} \le \left(\sum_{s\neq i}a_{is}\right)^\frac{1}{p}\left(\sum_{s\neq j}a_{js}\right)^\frac{p-1}{p}.
\end{equation}
Plugging \eqref{in}-\eqref{in1} in \eqref{S}, we get $\det(S_{ij})\ge 0$. If $\det(S_{ij})=0$, then \eqref{in} would hold with an equality, implying $a_{ij} = 0$. If we had $\det(S_{ij}) = 0$ for every $i,j$, then this would mean
\[
a_{ij} = \int |v_{ij}|^pd\mu' = 0.
\]
This is impossible since $\|\tau\|^2 = \|v_1\wedge v_2\|^2 = \sum_{i<j}|v_{ij}|^2 = 1$ $\mu'$-a.e., that is a consequence of the fact that $\mu$ is supported on $G$. Hence at least one of the $S_{ij}$ must be invertible, as desired.
\end{proof}

\begin{remark}
Let us remark that the considerations at the beginning of the section and Lemmas \ref{Beq}-\ref{char} hold in $\mathbb{G}(N,m)$, but we have chosen here to use only $\mathbb{G}(4,2)$ for ease of notation. We do not know whether Theorem \ref{Lp} holds for higher dimensions or codimensions. Moreover, numerical simulations indicated that for $\mathcal{G}(\cdot) = \|\cdot\|_{\ell_p}, p \in (1,\infty)$, $\Psi_{\mathcal{G}}$ fulfills (SAC), hence actually satisfies (AC), by Proposition \ref{PROOFAC2}. Unfortunately, we were not able to prove it analytically.
\end{remark}

\section{Acknowledgements}
The first author has been partially supported by the NSF DMS Grant No.~1906451. The second author has been supported by the SNF Grant 182565.

\bibliographystyle{plain}
\bibliography{ARpaper}

\end{document}